\title{Gelfand-type theorems for dynamical Banach modules}
\def\moverlay{\mathpalette\mov@rlay}
\def\mov@rlay#1#2{\leavevmode\vtop{%
   \baselineskip\z@skip \lineskiplimit-\maxdimen
   \ialign{\hfil$\m@th#1##$\hfil\cr#2\crcr}}}
\newcommand{\charfusion}[3][\mathord]{
    #1{\ifx#1\mathop\vphantom{#2}\fi
        \mathpalette\mov@rlay{#2\cr#3}
      }
    \ifx#1\mathop\expandafter\displaylimits\fi}
\newcommand{\cupdot}{\charfusion[\mathbin]{\cup}{\cdot}}
\newcommand{\bigcupdot}{\charfusion[\mathop]{\bigcup}{\cdot}}
\begin{document}

\begin{abstract}
  The representation theorems of Gelfand and Kakutani for commutative C*-algebras and AM- and AL-spaces are the basis for the Koopman linearization of topological and measure-preserving dynamical systems. In this article we prove versions of these results for dynamics on topological and measurable Banach bundles and the corresponding weighted Koopman representations on Banach modules.\medskip\\
  \textbf{Mathematics Subject Classification (2010)}. 46L08, 46M15, 47A67, 47D03.
\end{abstract}

\maketitle
\section{Introduction}
The concept of \emph{Koopman linearization} provides a very powerful method to study dynamical systems, see \cite{EFHN2015}. Given a \emph{topological $G$-dynamical system}, i.e., a locally compact group $G$ acting continuously on a locally compact space $\Omega$, one can consider the induced \emph{Koopman representation} of $G$ as automorphisms of the commutative C*-algebra $\mathrm{C}_0(\Omega)$ of all continuous functions on $\Omega$ vanishing at infinity given by $T(g)f(x) \defeq f(g^{-1}x)$ for $x \in \Omega$, $f \in \mathrm{C}_0(\Omega)$, and $g \in G$.\\
Passing to these linear operators opens the door for the use of functional analytic tools (e.g., spectral theory) to investigate the qualitative properties of the $G$-dynamical system. This is justified by Gelfand's representation theory which shows that no relevant information is lost in this process.\\
More precisely and in terms of category theory (see \cite{MacL1998} for an introduction), assigning the Koopman representation to a group action defines an equivalence of the category of topological $G$-dynamical systems and the category of strongly continuous representations of $G$ as automorphisms of commutative C*-algebras, see, e.g., Section 1.4 of \cite{Dixm1977} and Sections 4.3 and 4.4 of \cite{EFHN2015}.\\
Likewise, in the measure theoretic setting Koopman representations on $\mathrm{L}^1$-spaces reflect the qualitative behavior of measure-preserving systems up to null sets (under some separability assumptions, see Section 7.3 and Chapter 12 of \cite{EFHN2015}). Using Kakutani's representation theorem for AL-spaces (see Theorem II.8.5 of \cite{Scha1974}), such Koopman representations can also be characterized in terms of Banach lattice theory.\\
These results established a connection between topological dynamics and ergodic theory on one hand and functional analysis and operator theory on the other, leading, amongst others, to the classical and recent ergodic theorems.\\
%Motivated by the beauty and the scope of this method, the question was how to formulate an adequate Koopman linearization for \emph{differentiable dynamical systems}, i.e., differentiable flows on manifolds (see Chapter 5 of \cite{BaPe2013}).\\ 
In this article we prove suitable versions of these representation theorems for dynamics on Banach bundles and modules. This can be the starting point for a systematic operator theoretic investigation of differentiable flows on manifolds and their differentials on tangent bundles.\\

%In this article we prove versions of these results for dynamics on Banach bundles and modules.\\
%We fix a group $G$ acting on a locally compact or measure space $X$ and then consider a Banach bundle $E$ over $X$ and dynamics on $E$ compatible with the group action. These \emph{dynamical Banach bundles} then induce \emph{weighted Koopman representations} on spaces of sections of the bundle.\\

We consider a Banach bundle $E$ over a locally compact or measure space $X$ and dynamics on $E$ compatible with a fixed group action on $X$. These \emph{dynamical Banach bundles} then induce \emph{weighted Koopman representations} on Banach spaces of sections of the bundle.\\
Such dynamical Banach bundles and the induced weighted Koopman representations appear naturally in many contexts. Important examples are so-called evolution families solving nonautonomous Cauchy problems (see Section VI.9 of \cite{EN00}) and derivatives of smooth flows on manifolds (see Chapter 5 of \cite{BaPe2013}).\\
The goal of this article is to characterize such weighted Koopman representations via abstract algebraic and lattice theoretic properties.\\
The correspondence between topological Banach bundles and certain kinds of Banach modules has been established in the 70s and 80s of the last century (see, e.g., \cite{HoKe1977} and \cite{DuGi1983}). We extend these results to a dynamical setting and then also treat the measure theoretic case.\smallskip\\

We start in Section 2 by recalling the concepts of topological and measurable Banach bundles and introduce dynamics on these bundles. Concrete examples motivate the abstract concepts.\\
In the third section we consider Banach modules as the natural operator theoretic counterparts of Banach bundles. We introduce dynamics on these modules and give a first characterization of these operators via a locality condition (see \cref{supportlemma}). In particular, dynamical topological and measurable Banach bundles induce such \enquote{dynamical Banach modules} (see \cref{exampleThom} and \cref{examplemeasurable}).\\

As in the case of Banach lattices (see Sections II.7, II.8 and II.9 of \cite{Scha1974}) there are two important classes of Banach modules which are dual to each other: AM-modules and AL-modules.\\
In Subsection 4.1 we focus on AM-modules, which are known in the literature as (locally) convex Banach modules, see \cite{HoKe1977} or \cite{Gier1998}, and prove our first main result: A Gelfand-type representation theorem for dynamical AM-modules (see \cref{main}). In Subsection 4.2 we then discuss the duality between AM- and AL-modules (see \cref{alvsam}).\\
In Section 5 we see that AM- and AL-modules admit a lattice theoretic structure (see \cref{vectorvaluednorm1} and \cref{alnorm}). In \cref{latticevsmod} and \cref{preparationAL} we show that the algebraic structure of a module and this lattice theoretic structure are strongly related. In particular, weighted Koopman operators can be characterized algebraically (as \emph{weighted module homomorphisms}) or in a lattice theoretic way (as \emph{dominated operators}).\\
We use the lattice theoretic structure to prove our second representation theorem, which clarifies the relation between dynamical measurable Banach bundles and AL-modules (see \cref{main2}). It should be pointed out that---in contrast to the \enquote{AM case}---even the non-dynamical version of this result seems to be new (see \cref{representationAL}).\smallskip\\

In the following all vector spaces are over $\K \in \{\R,\C\}$ and all locally compact spaces are Hausdorff. Moreover, we write $\mathscr{L}(E,F)$ for the space of all bounded linear maps from a normed space $E$ to a normed space $F$.
\section{Dynamical Banach bundles}
\subsection{The topological case}
In this section we define dynamics on topological Banach bundles over some fixed topological dynamical system. Recall the following abstract definition of a Banach bundle (see Definition 1.1 in \cite{DuGi1983}, see also \cite{HoKe1977}).
\begin{definition}
A \emph{(topological) Banach bundle} over a locally compact space $\Omega$ is a pair $(E,p_E)$ consisting of a topological space $E$ and a continuous, open and surjective mapping $p_E\colon E\longrightarrow \Omega$ satisfying the following conditions.
\begin{enumerate}[(i)]
\item Each fiber $E_x \defeq p_E^{-1}(x)$ for $x \in \Omega$ is a Banach space.
\item The mappings
\begin{alignat*}{2}
+ &\colon E \times_\Omega E \longrightarrow E,  \quad &&(u,v) \mapsto u+_{E_{p_E(v)}} v,\\
\cdot \,&\colon \K \times E \longrightarrow E, \quad &&(\lambda,v) \mapsto \lambda \cdot_{E_{p_E(v)}} v
\end{alignat*}
are continuous where $E \times_\Omega E \defeq \bigcup_{x\in \Omega} E_x \times E_x \subset E \times E$ is equipped with the subspace topology.
\item The map
\begin{align*}
\|\cdot\|\colon E \longrightarrow \R_{\geq 0}, \quad v \mapsto \|v\|_{E_{p_E(v)}}
\end{align*}
is upper semicontinuous.
\item For each $x \in \Omega$ and each open set $W \subset E$ containing the zero $0_x \in E_x$ there exist $\varepsilon > 0$ and an open neighborhood $U$ of $x$ such that
\begin{align*}
\left\{v \in p_E^{-1}(U) \mmid \|v\| \leq \varepsilon\right\} \subset W.
\end{align*}
\end{enumerate}
\end{definition}
In the following we usually suppress the mapping $p_E$ and denote the bundle $(E,p_E)$ simply by $E$. Moreover, we call $E$ a \emph{continuous Banach bundle} if the mapping $\|\cdot\|$ is continuous.
\begin{remark}\label{extendedbundle}
Note that if $E$ is a Banach bundle over a locally compact space $\Omega$, we obtain a Banach bundle $\tilde{E}$ over the one-point compactfication $K \defeq \Omega \cupdot \{\infty\}$ in a canonical way by taking the space $\tilde{E} \defeq E \cupdot \{0\}$, the canonical mapping $p_{\tilde{E}} \colon \tilde{E} \longrightarrow K$ and the topology on $\tilde{E}$ generated by the topology on $E$ and the sets
\begin{align*}
U(L,\varepsilon) \defeq \left\{v \in p_{\tilde{E}}^{-1}(\Omega \setminus L) \mmid\|v\| < \varepsilon\right\}
\end{align*}
for compact $L \subset \Omega$ and $\varepsilon >0$. In the following we will frequently make use of this fact.
\end{remark}
We now list some important examples of Banach bundles.
\begin{example}\label{examples1}
\begin{enumerate}[(i)]
\item Let $Z$ be any Banach space and $\Omega$ a locally compact space. Then $E \defeq \Omega \times Z$ is a continuous Banach bundle over $\Omega$, called the \textit{trivial bundle with fiber} $Z$ if $p_E \colon \Omega \times Z \longrightarrow K$ is the projection onto the first component and $\Omega \times Z$ is equipped with the product topology.
\item Consider a Riemannian manifold $M$. Then the tangent bundle $\mathrm{T}M$ over $M$ is a continuous Banach bundle over $M$.
\item Let $\pi \colon L \longrightarrow K$ be a continuous surjection between compact spaces $L$ and $K$. For each $k \in K$ let $L_k \defeq \pi^{-1}(k)$ be the associated fiber. We define 
\begin{align*}
&E \defeq \bigcupdot_{k \in K} \mathrm{C}(L_k),\\
&p_E \colon E \longrightarrow K, \quad v \in \mathrm{C}(L_k) \mapsto k
\end{align*}
and endow this with the topology generated by the sets
\begin{align*}
W(s,U,\varepsilon) \defeq \{v \in p_E^{-1}(U)\mid \|v - s|_{L_{p(v)}}\|_{\mathrm{C}(L_{p(v)})} < \varepsilon\}
\end{align*}
where $U \subset K$ is open, $s \in \mathrm{C}(L)$ and $\varepsilon > 0$. Then $(E,p_E)$ is a Banach bundle over $K$. Moreover, it is easy to see that $E$ is a continuous Banach bundle if and only if $\pi$ is open. This construction has been used in topological dynamics (see e.g., page 30 of \cite{Knap1967} or Section 5 of \cite{Elli1987}). 
\end{enumerate}
\end{example}
The topology of a Banach bundle is determined by its continuous sections. We make this precise by the following definition and the subsequent lemma.
\begin{definition}\label{defcontsec}
Let $E$ be a Banach bundle over a locally compact space $\Omega$. A continuous mapping $s \colon \Omega \longrightarrow E$ is a \emph{continuous section} of $E$ if $p_E \circ s = \mathrm{id}_\Omega$. We write $\Gamma(E)$ for the space of continuous sections of $E$ and 
\begin{align*}
%\Gamma(E) &\defeq \{s \colon \Omega \longrightarrow E\mid s \textrm{ continuous section}\},\\
\Gamma_0(E) &\defeq \{s \in \Gamma(E)\mid \forall \varepsilon >0 \exists K \subset \Omega \textrm{ compact with } \|s(x)\| \leq \varepsilon \forall x \notin K\}
%&\Gamma_c(E) \defeq \{s \in \Gamma(E)\mid \forall \exists K \subset \Omega \textrm{ compact with } s(x) = 0 \forall x \notin K\}
\end{align*}
for the subspace of all \emph{continuous sections vanishing at infinity}.
\end{definition}
\begin{lemma}\label{topology}
Let $E$ be a Banach bundle over a locally compact space $\Omega$. For $v \in E$ the sets
\begin{align*}
V(s,U,\varepsilon) \defeq \{w \in p_E^{-1}(U) \mid \|w - s(p_E(w))\| < \varepsilon\},
\end{align*}
with $s \in \Gamma_0(E)$ satisfying $s(p_E(v)) = v$, $U \subset \Omega$ an open neighborhood of $p(v)$ and $\varepsilon > 0$, form a neighborhood base of $v$ in $E$.
\end{lemma}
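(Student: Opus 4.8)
The plan is to establish two things: that every set $V(s,U,\varepsilon)$ of the indicated form is an open neighbourhood of $v$, and that every open set $W\ni v$ contains such a $V(s,U,\varepsilon)$. The conceptual crux on which everything rests is the observation that \emph{subtracting a fixed continuous section is a homeomorphism of $E$}; this lets me transport the general situation to a neighbourhood of the zero vector $0_{p_E(v)}$, where condition (iv) of the bundle definition applies directly.

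First I would record that the family is nonempty. By the standard existence theorem for sections of Banach bundles (see \cite{DuGi1983}) there is a continuous section through $v$, and multiplying it by a compactly supported continuous scalar function equal to $1$ near $p_E(v)$ (using continuity of scalar multiplication, condition (ii)) produces a section $s\in\Gamma_0(E)$ with $s(p_E(v))=v$. For any such $s$ I introduce the maps $S_s,A_s\colon E\to E$, $S_s(u)\defeq u - s(p_E(u))$ and $A_s(u)\defeq u + s(p_E(u))$. Each is well defined because $s(p_E(u))$ lies in the fiber $E_{p_E(u)}$, and each is continuous: the assignment $u\mapsto\bigl(u,s(p_E(u))\bigr)$ maps continuously into $E\times_\Omega E$ (by continuity of $p_E$ and of $s$), and continuity then follows from condition (ii). Since $p_E\circ S_s=p_E=p_E\circ A_s$, a direct computation gives $A_s\circ S_s=\mathrm{id}_E=S_s\circ A_s$, so $S_s$ is a homeomorphism with inverse $A_s$.

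For the first claim I would note that $V(s,U,\varepsilon)=p_E^{-1}(U)\cap S_s^{-1}\bigl(\{z\in E\mid \|z\|<\varepsilon\}\bigr)$. The set $\{z\mid\|z\|<\varepsilon\}$ is open because $\|\cdot\|$ is upper semicontinuous (condition (iii)), so its $S_s$-preimage is open, and $p_E^{-1}(U)$ is open; hence $V(s,U,\varepsilon)$ is open. It contains $v$ since $p_E(v)\in U$ and $\|v-s(p_E(v))\|=\|0_{p_E(v)}\|=0<\varepsilon$.

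For the second claim let $W\ni v$ be open. Then $S_s(W)$ is an open set containing $S_s(v)=0_{p_E(v)}$, so condition (iv) applied at $x\defeq p_E(v)$ yields an $\varepsilon>0$ and an open neighbourhood $U$ of $x$ with $\{u\in p_E^{-1}(U)\mid\|u\|\le\varepsilon\}\subset S_s(W)$. Applying the homeomorphism $A_s$, and using $A_s(u)-s(p_E(A_s(u)))=u$ together with $p_E(A_s(u))=p_E(u)$, carries this set exactly onto $\{w\in p_E^{-1}(U)\mid\|w-s(p_E(w))\|\le\varepsilon\}$, which contains $V(s,U,\varepsilon)$; since $A_s(S_s(W))=W$, I conclude $V(s,U,\varepsilon)\subset W$. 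The only substantive external ingredient is the existence of a continuous section through $v$; beyond that the argument is the homeomorphism reduction together with conditions (ii)--(iv), so I expect no serious obstacle once the continuity of $S_s$ is checked carefully.
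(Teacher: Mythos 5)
Your argument is correct, but it takes a genuinely different route from the paper. The paper settles the lemma by citation and reduction: for compact base spaces it invokes Consequences 1.6 (vii) and Theorem 3.2 of \cite{Gier1998} (together with the remark that one may restrict to globally defined sections), and it handles the locally compact case by passing to the extended bundle $\tilde{E}$ over the one-point compactification (\cref{extendedbundle}). You instead give a direct, self-contained proof in the locally compact setting itself: your key observation that $S_s(u)= u - s(p_E(u))$ and $A_s(u)= u + s(p_E(u))$ are mutually inverse homeomorphisms of $E$ reduces everything to the behaviour of the topology near the zero vector $0_{p_E(v)}$, where openness of each $V(s,U,\varepsilon)$ follows from upper semicontinuity of the norm (condition (iii)) and cofinality in the neighborhood filter follows from condition (iv) transported back through $A_s$. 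This translation-by-a-section device is the standard one in the Banach bundle literature (it is essentially how Fell and Doran treat the corresponding statement), and what it buys is independence from the compactification step and from the cited machinery of \cite{Gier1998}; what the paper's route buys is brevity. The one nontrivial external input you need is the existence of a continuous section through an arbitrary $v \in E$ (Douady--dal Soglio-H\'erault); note that this is equally needed for the paper's formulation to be non-vacuous, so it is not an extra burden of your approach, but you should make sure the reference you cite covers locally compact base spaces --- Appendix C of \cite{FeDo1988} does, and alternatively one can apply the compact-base existence theorem on a compact neighborhood of $p_E(v)$ and then cut off with a Urysohn function exactly as you already do to land in $\Gamma_0(E)$.
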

\begin{proof}
In the case of a compact base space this follows from Consequences 1.6 (vii) and Theorem 3.2 of \cite{Gier1998} (note that by the proof of Proposition 2.2 of \cite{Gier1998} we may confine ourselves to considering globally defined sections). The general case can readily be reduced to this by considering $\tilde{E}$ (cf. \cref{extendedbundle}).
\end{proof}
In order to define dynamics on Banach bundles we need morphisms between them (cf. page 17 of \cite{DuGi1983}).
\begin{definition}\label{bundlemorph}
Let $\Omega$ be a locally compact space and $\varphi \colon \Omega \longrightarrow \Omega$ a continuous mapping. Consider Banach bundles $E$ and $F$ over $\Omega$. A \emph{(bounded) Banach bundle morphism over $\varphi$} from $E$ to $F$ is a continuous mapping 
\begin{align*}
\Phi \colon E \longrightarrow F
\end{align*}
such that
\begin{enumerate}[(i)]
\item $p_F\circ\Phi = \varphi \circ p_E$, i.e., the diagram
\[
\xymatrix{
E \ar[d]_-{p_E}\ar[r]^{\Phi} &F\ar[d]^-{p_F}\\
\Omega \ar[r]^{\phi} &\Omega
}
\]
commutes,
\item $\Phi|_{E_x} \in \mathscr{L}(E_x,F_{\varphi(x)})$ for each $x \in \Omega$,
\item $\|\Phi\|\defeq \sup_{x \in \Omega} \|\Phi|_{E_x}\|_{\mathscr{L}(E_x,F_{\phi(x)})} < \infty$.
\end{enumerate} 
\end{definition}
Moreover, $\Phi$ is \emph{isometric} if $\Phi|_{E_x}$ is an isometry for each $x \in \Omega$. If $\varphi = \mathrm{id}_{\Omega}$, we simply call a Banach bundle morphism over $\varphi$ a \emph{Banach bundle morphism}.\medskip

\begin{remark}
If $\Omega = K$ is compact, then conditions (i) and (ii) of \cref{bundlemorph} already imply (iii). This can be seen using the same arguments as in the proof of Proposition 1.4 of \cite{DuGi1983}.
\end{remark}
We are interested in dynamical Banach bundles over invertible dynamical systems. Therefore we fix a \emph{topological $G$-dynamical system} $(\Omega;\varphi)$ for the rest of the section, i.e., $\Omega$ is assumed to be a locally compact space and $G$ is a locally compact group acting on $\Omega$ via the continuous mapping
\begin{align*}
\varphi\colon  G\times \Omega \longrightarrow \Omega, \quad (g,x) \mapsto \phi_g(x) = gx.
\end{align*}
Moreover, let $S \subset G$ be a closed subsemigroup of $G$ containing the neutral element $e$, i.e., a closed submonoid of $G$. Important examples of this situation are the cases of $G= \Z$, $S = \N_0$ and $G= \R$, $S= \R_{\geq 0}$.

\begin{definition}\label{defdynamicalbundle}
An \emph{$S$-dynamical Banach bundle over $(\Omega;\varphi)$} is a pair $(E;\Phi)$ of a Banach bundle $E$ over $\Omega$ and a monoid homomorphism
\begin{align*}
\Phi \colon S \longrightarrow E^{E}, \quad g \mapsto \Phi_g,
\end{align*}
such that 
\begin{enumerate}[(i)]
\item the mapping
\begin{align*}
\Phi_g \colon E \longrightarrow E
\end{align*}
is a Banach bundle morphism over $\varphi_g$ for each $g \in S$,
\item $\Phi$ is \emph{jointly continuous}, i.e., the mapping
\begin{align*}
S \times E \longrightarrow E, \quad (g,v) \mapsto \Phi_gv
\end{align*}
is continuous,
\item $\Phi$ is \emph{locally bounded}, i.e., $\sup_{g \in K} \|\Phi_g\| < \infty$ for every compact subset $K \subset S$.
\end{enumerate}
A \emph{morphism} from an $S$-dynamical Banach bundle $(E;\Phi)$ over $(\Omega;\varphi)$ to an $S$-dynamical Banach bundle $(F;\Psi)$ over $(\Omega;\varphi)$ is a Banach bundle morphism $\Theta \colon E \longrightarrow F$ such that the diagram
\[
\xymatrix{
E \ar[d]_-{\Phi_g}\ar[r]^{\Theta} & F\ar[d]^-{\Psi_g}\\
E \ar[r]_{\Theta} & F
}
\]
commutes for each $g \in S$.
\end{definition}

\begin{remark}
The concept of a dynamical Banach bundle is closely related to the notion of cocycles and linear skew-product flows (cf. Definition 6.1 of \cite{ChLa1999}). In fact, if $(E;\Phi)$ is an $S$-dynamical Banach bundle over $(\Omega;\varphi)$, the operators $\Phi_{g,x}\defeq \Phi_g|_{E_x} \in \mathscr{L}(E_x, E_{\varphi_g(x)})$ for $g \in S$ and $x \in K$ satisfy the \emph{cocycle rule}
\begin{align*}
\Phi_{g_1g_2,x} = \Phi_{g_1,\phi_{g_2}(x)}\circ \Phi_{g_2,x}
%= \Phi_{g_1g_2}|_{E_x} = \Phi_{g_1}|_{E_{\phi_{g_2}(x)}} \circ \Phi_{g_2}|_{E_x}
\end{align*}
for all $g_1,g_2 \in S$ and $x \in \Omega$.
\end{remark}

If $\Omega = K$ is compact, then---once again---a simple adaptation of the arguments of proof of Proposition 1.4 of \cite{DuGi1983} shows that the third condition in \cref{defdynamicalbundle} is superfluous.

\begin{proposition}\label{compactlocallybounded}
Let $\Omega = K$ be compact. Then every monoid homomorphism $\Phi \colon S \longrightarrow E^{E}, \, g \mapsto \Phi_g$ satisfying conditions (i) and (ii) of \cref{defdynamicalbundle} defines an  $S$-dynamical Banach bundle over $(\Omega;\varphi)$.
\end{proposition}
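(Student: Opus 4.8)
The plan is to transfer the problem from the bundle $E$ to the Banach space of sections $\Gamma(E)$ (which coincides with $\Gamma_0(E)$ since $K$ is compact, and which is complete for the sup-norm $\|s\|_\infty \defeq \sup_{x \in K}\|s(x)\|$) and then to invoke the uniform boundedness principle. Fix a compact set $C \subset S$; we must show $\sup_{g \in C}\|\Phi_g\| < \infty$. For each $g \in S$ I would introduce the push-forward operator $T_g \colon \Gamma(E) \longrightarrow \Gamma(E)$, $(T_g s)(y) \defeq \Phi_g\big(s(\varphi_g^{-1}(y))\big)$. This is well defined because $\varphi_g$ is a homeomorphism of $K$ (as $\varphi$ is a $G$-action), $\Phi_g$ is continuous by condition (i), and $\Phi_g$ maps $E_{\varphi_g^{-1}(y)}$ into $E_y$, so that $T_g s$ is again a continuous section. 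Each $T_g$ is linear and bounded with $\|T_g\| \le \|\Phi_g\| < \infty$, the finiteness of $\|\Phi_g\|$ for each fixed $g$ being part of condition (i) (each $\Phi_g$ is a bundle morphism).

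The first key step is the norm identity $\|\Phi_g\| = \|T_g\|$. The inequality $\|T_g\| \le \|\Phi_g\|$ is immediate from $\|T_g s\|_\infty = \sup_{x}\|\Phi_g(s(x))\| \le \|\Phi_g\|\,\|s\|_\infty$. For the reverse inequality I would, given $v \in E_x$ with $\|v\| \le 1$ and $\varepsilon > 0$, produce a continuous section $s$ with $s(x) = v$ and $\|s\|_\infty \le 1 + \varepsilon$: by \cref{topology} there is some $s_0 \in \Gamma(E)$ with $s_0(x) = v$; upper semicontinuity of the norm yields a neighborhood $U$ of $x$ on which $\|s_0(\cdot)\| < 1 + \varepsilon$; cutting off by a function $\chi \in \mathrm{C}(K)$ with $0 \le \chi \le 1$, $\chi(x) = 1$ and $\chi \equiv 0$ outside $U$ gives $s \defeq \chi \cdot s_0$ with the desired properties. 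Then $\|\Phi_g v\| = \|(T_g s)(\varphi_g(x))\| \le \|T_g\|(1+\varepsilon)$, and taking the supremum over such $v$ and letting $\varepsilon \to 0$ yields $\|\Phi_g\| \le \|T_g\|$.

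The second step is pointwise boundedness of the family $\{T_g : g \in C\}$. Fix $s \in \Gamma(E)$. By joint continuity (condition (ii)) the map $(g,x) \mapsto \Phi_g(s(x))$ is continuous on the compact set $C \times K$, so its image is a compact subset of $E$; since the bundle norm is upper semicontinuous, it is bounded above on this image, whence $\sup_{g \in C}\|T_g s\|_\infty = \sup_{g \in C}\sup_{x}\|\Phi_g(s(x))\| < \infty$. As $\Gamma(E)$ is a Banach space and $\{T_g : g \in C\}$ is a pointwise bounded family of bounded operators, the uniform boundedness principle gives $\sup_{g \in C}\|T_g\| < \infty$, and the norm identity then yields $\sup_{g \in C}\|\Phi_g\| < \infty$, as required.

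I expect the main obstacle to be the norm identity $\|\Phi_g\| = \|T_g\|$, and more precisely the construction of sections of controlled norm through a prescribed point $v \in E_x$: this is where the geometry of the bundle actually enters, combining the existence of sections from \cref{topology} with the upper semicontinuity of the norm and a cutoff argument. By contrast, the completeness of $\Gamma(E)$ and the appeal to Banach--Steinhaus are routine. This is precisely the point at which the argument adapts the reasoning behind Proposition 1.4 of \cite{DuGi1983} to the dynamical, parameter-dependent setting.
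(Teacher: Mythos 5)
Your proof is correct, but it takes a genuinely different route from the paper's. The paper never leaves the bundle: fixing $(g,x) \in C \times K$ and using that $\Phi_g 0_x = 0_{\varphi_g(x)}$, joint continuity of $(h,v) \mapsto \Phi_h v$ together with axiom (iv) of the definition of a Banach bundle produces an open neighborhood $V$ of $g$, an open neighborhood $U$ of $x$ and an $\varepsilon > 0$ such that $\Phi_h\left(\{v \in p_E^{-1}(U) \mid \|v\| \leq \varepsilon\}\right) \subset \{w \in E \mid \|w\| \leq 1\}$ for all $h \in V$; fiberwise linearity and scaling then give $\|\Phi_h|_{E_y}\| \leq 1/\varepsilon$ for all $h \in V$ and $y \in U$, and a finite subcover of the compact set $C \times K$ finishes the proof. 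You instead transfer the problem to the section space and invoke Banach--Steinhaus, which needs two inputs the paper's argument avoids entirely: the existence of continuous sections of controlled norm through every point of $E$ (the nontrivial section-existence theorem underlying \cref{topology}, refined by your cutoff construction) and the completeness of $\Gamma(E)$. What your route buys is the exact norm identity $\|T_g\| = \|\Phi_g\|$ for the weighted Koopman operators, a fact the paper only obtains later (\cref{homismorph}, via Theorem 2.6 of \cite{DuGi1983}), and an argument very much in the spirit of the paper's bundle-to-module transfer philosophy. Note, finally, that your pointwise-boundedness step---compactness of the image of $C \times K$ under $(g,x) \mapsto \Phi_g(s(x))$ plus upper semicontinuity of the norm---is a close cousin of the paper's argument, applied along an arbitrary section instead of the zero section; in effect you replace the paper's scaling step by the uniform boundedness principle.
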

\begin{proof}
Pick $x \in L$ and $g \in S$. Since $\Phi_g 0_x = 0_{\varphi_g(x)}$ we find an open neighborhood $U$ of $x$, $\varepsilon > 0$ and an open neighborhood $V$ of $g$ such that
\begin{align*}
\Phi_h(\{v \in p_E^{-1}(U) \mid \|v\| \leq \varepsilon\}) \subset \{w \in E \mid \|w\| \leq 1\}
\end{align*}
for every $h \in V$. But then $\|\Phi_g|_{E_y}\| \leq \frac{1}{\varepsilon}$ for every $g \in V$ and $y \in U$. Compactness now yields the claim.
\end{proof}

Now we consider dynamics on the Banach bundles of \cref{examples1}.
\begin{example}
\begin{enumerate}[(i)]
\item Assume that $G= \R$, $S= \R_{\geq 0}$, $Z$ is a Banach space and $E= \Omega \times Z$ is the corresponding trivial Banach bundle.\\ If $\{\Phi^t(x) \in \mathscr{L}(Z)\mid x \in \Omega, t \geq 0\}$ is a strongly continuous exponentially bounded cocycle in the sense of Definition 6.1 of \cite{ChLa1999}, then the continuous linear skew-product flow $\Phi_t$ given by 
\begin{align*}
\Phi_t(x,v) \defeq (\varphi_t(x),\Phi^t(x)v)
\end{align*}
for $x \in \Omega$, $v \in Z$ and $t \geq 0$ defines an $\R_{\geq 0}$-dynamical Banach bundle $(E;\Phi)$ over $(\Omega;\varphi)$. Conversely, each $\R_{\geq 0}$-dynamical Banach bundle $(E;\Phi)$ defines a strongly continuous exponentially bounded cocycle by setting
\begin{align*}
\Phi^t(x)v \defeq \mathrm{pr}_2(\Phi_t(x,v))
\end{align*}
for $x \in \Omega$, $v \in Z$ and $t \geq 0$, where $\mathrm{pr}_2\colon \Omega \times Z \longrightarrow Z$ is the projection onto the second component.\\
In particular, evolution families (see Example 6.5 of \cite{ChLa1999} and Section IV.9 of \cite{EN00}) define $\R_{\geq 0}$-dynamical Banach bundles.
\item If $\Omega = M$ is a Riemannian manifold and $\varphi_g \colon M \longrightarrow M$ is differentiable for each $g \in G$, then, by the chain rule, the differentials $\mathrm{D}\varphi_g$ define a $G$-dynamical Banach bundle $(\mathrm{T}M;\mathrm{D}\varphi)$ over $(M;\varphi)$ if $D\varphi$ is locally bounded.
\item Assume that $\Omega = K$ is compact and $\pi \colon (L;\psi) \longrightarrow (K;\varphi)$ is an extension of topological $G$-dynamical systems, i.e., a continuous surjection intertwining the dynamics, and $E$ is defined as in \cref{examples1} (iii). For each $g \in G$ consider
\begin{align*}
\Phi_g\colon E \longrightarrow E, \quad v \in \mathrm{C}(L_k) \mapsto v \circ \psi_{g^{-1}} \in \mathrm{C}(L_{\varphi_g(k)}).
\end{align*}
This defines a $G$-dynamical Banach bundle $(E;\Phi)$ over $(K;\varphi)$.
\end{enumerate}
\end{example}
\subsection{The measurable case}
A measure space $X$ is a triple $(\Omega_X,\Sigma_X,\mu_X)$ consisting of a set $\Omega_X$, a $\sigma$-algebra $\Sigma_X$ of subsets of $\Omega_X$ and a positive $\sigma$-finite measure $\mu_X \colon \Sigma_X \longrightarrow [0,\infty]$. We also assume that our measure spaces are \emph{complete}, i.e., subsets of null sets are measurable.\\
We define Banach bundles over measure spaces as in Section II.4 of \cite{FeDo1988} or Appendix A.3 of \cite{DeRe2000} (see also \cite{Gutm1993b}).
\begin{definition}\label{defmeasurablebundle}
A \emph{(measurable) Banach bundle} over a measure space $X$ is a triple $(E,p_E,\mathcal{M}_E)$ where $E$ is a set, $p_E \colon E \longrightarrow \Omega_X$ is a surjective mapping such that the fiber $E_x \defeq p_E^{-1}(x)$ is a Banach space for each $x \in \Omega_X$ and $\mathcal{M}_E$ is a linear subspace of
\begin{align*}
\mathcal{S}_E \defeq \{s \colon \Omega_X \longrightarrow E\mid p_E \circ s = \mathrm{id}_{\Omega_X}\}
\end{align*}
such that
\begin{enumerate}[(i)]
\item if $f \colon \Omega_X \longrightarrow \K$ is measurable and $s \in \mathcal{M}_E$, then $fs \in \mathcal{M}_E$, where
\begin{align*}
fs \colon s \longrightarrow E, \quad x \mapsto f(x)s(x),
\end{align*}
\item for each $s \in \mathcal{M}_E$ the mapping
\begin{align*}
|s| \colon \Omega_X \longrightarrow \R_{\geq 0}, \quad x \mapsto \|s(x)\|_{E_x}
\end{align*}
is measurable,
\item if $(s_n)_{n \in \N}$ is a sequence in $\mathcal{M}_E$ converging almost everywhere to $s \in \mathcal{S}_E$, then $s \in \mathcal{M}_E$.
\end{enumerate}
Elements $s \in \mathcal{S}_E$ are called \emph{sections} and elements $s \in \mathcal{M}_E$ are called \emph{measurable sections}.\\
The bundle is \emph{separable} if, in addition, 
\begin{enumerate}[(i)]
\setcounter{enumi}{3}
\item there is a sequence $(s_n)_{n \in \N}$ in $\mathcal{M}_E$ such that $\lin \{s_n(x)\mid n \in \N\}$ is dense in $E_x$ for almost every $x \in \Omega_X$.
\end{enumerate}
\end{definition}
We mostly just write $E$ for a measurable Banach bundle $(E,p_E,\mathcal{M}_E)$.
\begin{remark}\label{generatedmeasbundle}
Let $X$ be a measure space and $(E,p_E)$ a pair of a set $E$ and a surjective mapping $p_E \colon E \longrightarrow \Omega_X$ such that the fiber $E_x \defeq p_E^{-1}(x)$ is a Banach space for each $x \in \Omega_X$. Then by Section II.4.2 of \cite{FeDo1988} every linear subspace $\mathcal{M}_E$ of $\mathcal{S}_E$ satisfying condition (iii) of \cref{defmeasurablebundle} \emph{generates} a measurable Banach bundle, i.e., there is a smallest linear subspace $\tilde{\mathcal{M}}_E$ of $\mathcal{S}_E$ containing $\mathcal{M}_E$ such that $(E,p_E, \tilde{\mathcal{M}}_E)$ is a measurable Banach bundle. Moreover, $\tilde{\mathcal{M}}_E$ consists precisely of all almost everywhere limits of sequences in $\lin\{\mathbbm{1}_As\mid A \in \Sigma_X, s \in \mathcal{M}_E\}$.
\end{remark}
We briefly list some examples for measurable Banach bundles and refer to Appendix A.3 of \cite{DeRe2000} for additional examples.
\begin{example}\label{examplesmeasurable}
\begin{enumerate}[(i)]
\item Let $X$ be a measure space and $Z$ a Banach space. Consider $E \defeq \Omega_X \times Z$ with the projection $p_E$ onto the first component. The space of sections $\mathcal{S}_E$ can be identified with the space of all functions from $\Omega_X$ to $Z$. The set of all strongly measurable functions (see Section 1.3.5 of \cite{HiPh1957}) then defines a subset $\mathcal{M}_E$ of $\mathcal{S}_E$ which turns $E$ into a measurable Banach bundle called the \emph{trivial Banach bundle with fiber} $Z$. This coincides with the measurable Banach bundle generated by the constant sections (see Section II.5.1 of \cite{FeDo1988}).
\item Let $E$ be a topological Banach bundle over a locally compact space $\Omega$, $\mu$ be a $\sigma$-finite regular Borel measure on $\Omega$ and $\mathcal{B}(\Omega)$ the Borel $\sigma$-algebra of $\Omega$. Then the space $\Gamma(E)$ (see \cref{defcontsec}) generates a measurable Banach bundle $E_\mu$ over the completion of the measure space $(\Omega,\mathcal{B}(\Omega),\mu)$. See Section II.15 of \cite{FeDo1988} for a more explicit description of the measurable sections of a continuous Banach bundle.
\end{enumerate}
\end{example}
Before introducing dynamics on measurable Banach bundles, we first define morphisms of measure spaces. A \emph{premorphism} $\varphi \colon X \longrightarrow Y$ between measure spaces $X$ and $Y$ is a measurable and measure-preserving mapping $\varphi \colon \Omega_X \longrightarrow \Omega_Y$. Setting $\varphi \sim \psi$ if $\varphi(x) = \psi(x)$ for almost every $x \in \Omega_X$ defines an equivalence relation on the set of premorphisms from $X$ to $Y$. The equivalence classes with respect to this equivalence relation are then the \emph{morphisms} from $X$ to $Y$. As usual, given a morphism we will implicitly choose a representative of it but also denote it by $\varphi$ when there is no room for confusion.\\
We now define morphisms of measurable Banach bundles in a similar manner.
\begin{definition}\label{premorphdef}
Let $\varphi \colon X \longrightarrow X$ be a morphism on a measure space $X$. Consider Banach bundles $E$ and $F$ over $X$. A \emph{premorphism} $\Phi$ from $E$ to $F$ over $\varphi$ is a mapping $\Phi \colon E \longrightarrow F$ such that
\begin{enumerate}[(i)]
\item $\Phi \circ \mathcal{M}_E \subset \mathcal{M}_F \circ \varphi$,
\item $p_F \circ \Phi = \phi \circ p_E$ almost everywhere,
\item $\Phi|_{E_x} \in \mathscr{L}(E_x,F_{\varphi(x)})$ for almost every $x \in \Omega_X$,
\item $\|\Phi\| \defeq \esssup_{x \in \Omega_X} \|\Phi|_{E_x}\| < \infty$.
\end{enumerate}
\end{definition}
Again, we want to identify premorphisms which agree up to a null set. Set
\begin{align*}
\mathrm{Premor}_\varphi(E,F) &\defeq \{\Phi \colon E \longrightarrow F \textrm{ premorphism over } \varphi\},\\
\mathcal{N}_\varphi(E,F) &\defeq \{\Phi \in \mathrm{Premor}_\varphi(E,F)\mid \Phi = 0 \textrm{ almost everywhere}\},
\end{align*}
and $\mathrm{Mor}_\varphi(E,F) \defeq \mathrm{Premor}_\varphi(E,F)/\mathcal{N}_\varphi(E,F)$ for measurable Banach bundles $E$ and $F$ as above.\\
An equivalence class $[\Phi] \in \mathrm{Mor}_\varphi(E,F)$ is called a \emph{morphism of measurable Banach bundles over $\varphi$}. It is \emph{isometric} if $\Phi|_{E_x}$ is isometric for almost every $x \in \Omega_X$. If $\varphi = \mathrm{id}_X$, we call a morphism over $\varphi$ simply a \emph{morphism of measurable Banach bundles}.\\
As above, we will implicitly choose representatives of morphisms whenever necessary and denote them with the same symbol.\medskip

Now we introduce dynamical measurable Banach bundles.
For the rest of this section let $G$ be a group with neutral element $e \in G$. We fix a \emph{measure-preserving $G$-dynamical system} $(X;\varphi)$, i.e., a measure space $X$ together with a group homomorphism 
\begin{align*}
\varphi\colon G \longrightarrow \mathrm{Aut}(X), \quad g \mapsto \varphi_g,
\end{align*}
where $\mathrm{Aut}(X)$ is the set of automorphisms of $X$. Also fix a submonoid $S \subset G$, i.e., a subsemigroup containing $1 \in G$.
\begin{definition}
An \emph{$S$-dynamical Banach bundle over $(X;\varphi)$} is a pair $(E;\Phi)$ of a measurable Banach bundle $E$ over $X$ and a family $(\Phi_g)_{g \in S}$ with $\Phi_g \colon E \rightarrow E$ a morphism over $\varphi_g$ for $g \in S$ such that
	\begin{itemize}
		\item $\Phi_g \circ \Phi_h = \Phi_{gh}$ for all $g,h \in S$,
		\item $\Phi_1 = \mathrm{Id}_E$.
	\end{itemize}
We call $(E;\Phi)$ \emph{separable} if $E$ is separable.\smallskip

A \emph{morphism} between measurable Banach bundles $(E;\Phi)$ and $(F;\Psi)$ over $(X;\varphi)$ is a morphism $\Theta\colon E \longrightarrow F$ of Banach bundles such that the diagram
\[
\xymatrix{
E \ar[d]_-{\Phi_g}\ar[r]^{\Theta} & F\ar[d]^-{\Psi_g}\\
E \ar[r]_{\Theta} & F
}
\]
commutes for each $g \in S$.
\end{definition}
\begin{example}
\begin{enumerate}[(i)]
\item Let $E$ be the trivial bundle with fiber $Z$ (see \cref{examplesmeasurable} (i)). Then the $S$-dynamical Banach bundles correspond to \emph{measurable cocycles}, i.e., mappings $\Phi \colon S \times X \longrightarrow \mathscr{L}(Z)$ such that
\begin{itemize}
	\item $\Phi(gh,x) = \Phi(g,\varphi_h(x)) \circ \Phi(h,x)$ for almost every $x \in X$ for all $g,h \in S$,
	\item $\Phi(1,x) = \mathrm{Id}_Z$ for almost every $x \in X$,
	\item $X \longrightarrow Z, \, x \mapsto \Phi(g,x)v$ is strongly measurable for all $g \in S$ and $v \in Z$,
	\item $\esssup_{x \in \Omega_X} \|\Phi(g,x)\| < \infty$ for every $g \in S$.
\end{itemize}
\item Let $(E;\Phi)$ be a topological $S$-dynamical Banach bundle over a topological $G$-dynamical system $(\Omega;\varphi)$ (with $G$ and $S$ discrete) and let $\mu$ be a $\sigma$-finite regular Borel measure on $\Omega$. Moreover, let $E_\mu$ be the induced measurable Banach bundle of \cref{examplesmeasurable} (ii). Then $(E_\mu;\Phi)$ is an $S$-dynamical measurable Banach bundle over the mea\-sure-preserving $G$-dynamical system induced by $(\Omega;\varphi)$.
\end{enumerate}
\end{example}

\section{Dynamical Banach modules}
In the previous sections we have defined dynamics on topological and measurable Banach bundles. We now consider  Banach modules as the operator theoretic counterparts. 
First we recall the following definition from Section 2 of \cite{DuGi1983}. 
\begin{definition}
Let $A$ be a commutative Banach algebra. A Banach space $\Gamma$ which is also an $A$-module is a \emph{Banach module over $A$} if $\|fs\| \leq \|f\|\|s\|$ for all $f \in A$ and $s \in \Gamma$.\smallskip

A \emph{homomorphism} from a Banach module $\Gamma$ over $A$ to a Banach module $\Lambda$ over $A$ is a bounded operator $T \in \mathscr{L}(\Gamma,\Lambda)$ which is also an $A$-module homomorphism. It is \emph{isometric} if $T$ is an isometry.
\end{definition}
In the following we always assume that Banach modules $\Gamma$ over a commutative Banach algebra $A$ are \emph{non-degenerate} (see \cite{Para2008}) in the sense that
\begin{align*}
\Gamma = \overline{\lin}\,\{fs\mid f \in A, s \in \Gamma\}.
\end{align*}
Note that if $A$ is a commutative C*-algebra (if $\K = \C)$ or its self-adjoint part (if $\K = \R$) and $(e_i)_{i \in I}$ is an approximate unit (see Section 1.8 of \cite{Dixm1977}), then this is the case if and only if $\lim_i e_i s = s$ for each $s \in \Gamma$. In particular, if $A$ has a unit, then the module is unitary.\smallskip

We now discuss Banach modules associated with Banach bundles. 
\begin{example}\label{examplesections}
Let $E$ be a topological Banach bundle over a locally compact space $\Omega$.
Then $\Gamma_0(E)$ (see \cref{defcontsec}) is a Banach module over $\mathrm{C}_0(\Omega)$ if equipped with the operation
\begin{align*}
\mathrm{C}_0(\Omega) \times \Gamma_0(E) \longrightarrow \Gamma_0(E), \quad (f,s) \mapsto [x \mapsto f(x)s(x)]
\end{align*}
and the norm $\|\cdot\|$ defined by $\|s\| \defeq \sup_{x \in \Omega} \|s(x)\|$ for $s \in \Gamma_0(E)$.
\end{example}
\begin{remark}\label{moduleextension}
Let $\Omega$ be a locally compact space and $E$ a Banach bundle over $\Omega$. If $K$ is the one-point compactification of $\Omega$ and $\tilde{E}$ the extended bundle of $E$ (see \cref{extendedbundle}), then 
\begin{align*}
	\Gamma(\tilde{E}) \rightarrow \Gamma_0(E),\quad s \mapsto s|_\Omega
\end{align*}
is an isometric isomorphism of Banach spaces. In particular, we can consider $\Gamma_0(E)$ as a Banach module over $\mathrm{C}(K)$.
\end{remark}
\begin{example}\label{measurablesection}
For a measurable Banach bundle $E$ over a measure space $X$ we define
\begin{align*}
\mathcal{N}_E &\defeq \{s \in \mathcal{M}_E\mid s = 0 \textrm{ almost everywhere}\},\\
\Gamma^1(E) &\defeq \left\{s \in \mathcal{M}_E \mid |s| \textrm{ is integrable} \right\}/\mathcal{N}_E,\\
\Gamma^\infty(E) &\defeq \left\{s \in \mathcal{M}_E \mid |s| \textrm{ is essentially bounded}\right\}/\mathcal{N}_E.
\end{align*}
With the natural norms and operations the spaces $\Gamma^1(E)$ and $\Gamma^\infty(E)$ are Banach modules over $\mathrm{L}^\infty(X)$.
\end{example}

In order to define dynamical Banach modules we now proceed as above and define first \enquote{morphisms over morphisms}.

\begin{definition}
Let $A$ be a commutative Banach algebra and $T \in \mathscr{L}(A)$ an algebra homomorphism. Moreover, let $\Gamma$ and $\Lambda$ be Banach modules over $A$. Then $\mathcal{T} \in \mathscr{L}(\Gamma,\Lambda)$ is a \emph{$T$-homomorphism} if
\begin{align*}
\mathcal{T}(fs) = Tf\cdot \mathcal{T}s \textrm{ for all } f \in A \textrm{ and } s \in \Gamma.
\end{align*}
\end{definition}
\begin{example}\label{examplemorphover}
\begin{enumerate}[(i)]
\item Let $\varphi \colon \Omega \longrightarrow \Omega$ be a homeomorphism of a locally compact space $\Omega$. Then the \emph{Koopman operator} $T_\varphi \in \mathscr{L}(\mathrm{C}_0(\Omega))$ defined by $T_\varphi f \defeq f \circ \varphi^{-1}$ for $f \in \mathrm{C}_0(\Omega)$ is an algebra automorphism.\\
If $E$ and $F$ are Banach bundles over $\Omega$ and $\Phi \colon E \longrightarrow F$ is a Banach bundle morphism over $\varphi$, the \emph{weighted Koopman operator} $\mathcal{T}_\Phi \in \mathscr{L}(\Gamma_0(E),\Gamma_0(F))$ given by $\mathcal{T}_\Phi s \defeq \Phi \circ s\circ \varphi^{-1}$ for $s \in \Gamma_0(E)$ is a $T_\varphi$-homomorphism.
\item Let $\varphi \colon X \longrightarrow X$ be an automorphism of a measure space $X$. Then the \emph{Koopman operator} $T_\varphi \in \mathscr{L}(\mathrm{L}^\infty(X))$ defined by $T_\varphi f \defeq f \circ \varphi^{-1}$ for $f \in \mathrm{L}^\infty(X)$ is an algebra automorphism.\\
If $E$ and $F$ are Banach bundles over $X$ and $\Phi \colon E \longrightarrow F$ is a Banach bundle morphism over $\varphi$, the \emph{weighted Koopman operator} $\mathcal{T}_\Phi \in \mathscr{L}(\Gamma^1(E),\Gamma^1(F))$ given by $\mathcal{T}_\Phi s \defeq \Phi \circ s\circ \varphi^{-1}$ for $s \in \Gamma^1(E)$ is a $T_\varphi$-homomorphism. Similarly, $\Phi$ induces an operator $\mathcal{T}_\Phi \in \mathscr{L}(\Gamma^\infty(E),\Gamma^\infty(F))$.
\end{enumerate} 
\end{example}
Before introducing the concept of dynamical Banach modules we prove a different characterization of $T$-homomorphisms as some sort of \enquote{locality preserving operators}. We start with the following definition.
\begin{definition}
Let $A$ be a commutative Banach algebra and $\Gamma$ a Banach module over $A$. For $s \in \Gamma$ we call the closed ideal
	\begin{align*}
		I_s\defeq \{f \in A \mid fs = 0\}
	\end{align*}
the \emph{supporting ideal of} $s$ \emph{in} $A$.
\end{definition}
If $A = \mathrm{C}_0(\Omega)$ for some locally compact space $\Omega$, then there is a correspondence between the concept of supporting ideals and the following notion of support (see Definition 9.3 of \cite{AbArKi1992}).
\begin{definition}\label{defsupport}
Let $\Omega$ be a locally compact space and $\Gamma$ a Banach module over $\mathrm{C}_0(\Omega)$. For $s \in \Gamma$ we call
\begin{align*}
\supp(s) \defeq \{x \in \Omega \mid \textrm{each } f \in \mathrm{C}_0(\Omega) \textrm{ with } f(x) \neq 0 \textrm{ satisfies } fs \neq 0\} \subset \Omega
\end{align*}
the \emph{support of $s$ in $\Omega$}.
\end{definition}
\begin{lemma}\label{descriptionsupport}
Let $\Omega$ be a locally compact space and $\Gamma$ a Banach module over $\mathrm{C}_0(\Omega)$. Then 
\begin{align*}
	I_s = \{f \in \mathrm{C}_0(\Omega)\mid f|_{\supp(s)} = 0\}.
\end{align*}
for every $s \in \Gamma$.
\end{lemma}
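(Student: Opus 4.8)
The plan is to reduce the stated identity to the classical \emph{hull--kernel} correspondence for the algebra $\mathrm{C}_0(\Omega)$. First I would record that $I_s$ is a \emph{closed ideal}: it is an ideal because the module law $(gf)s = g(fs)$ shows that $fs=0$ forces $(gf)s = 0$ for every $g \in \mathrm{C}_0(\Omega)$, and it is closed because the module inequality $\|fs\| \leq \|f\|\,\|s\|$ makes $f \mapsto fs$ continuous, so that $I_s$ is the preimage of $\{0\}$. Writing $h(I) \defeq \{x \in \Omega \mid f(x)=0 \text{ for all } f \in I\}$ for the hull of an ideal and $k(F) \defeq \{f \in \mathrm{C}_0(\Omega)\mid f|_F = 0\}$ for the kernel of a closed set, the goal is precisely $I_s = k(\supp(s))$.

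The decisive observation is that $\supp(s)$ is exactly the hull of $I_s$. Unwinding \cref{defsupport}, a point $x$ lies in $\supp(s)$ iff every $f$ with $f(x)\neq 0$ satisfies $fs\neq 0$; the contrapositive of the implication inside says that every $f$ with $fs = 0$ vanishes at $x$, i.e.
\[
\supp(s) = \{x \in \Omega \mid f(x)=0 \text{ for all } f \in I_s\} = h(I_s).
\]
Thus the assertion becomes $I_s = k(h(I_s))$, i.e. that the closed ideal $I_s$ is recovered as the kernel of its own hull. One inclusion is then immediate and uses nothing beyond the definition of the hull: every $f \in I_s$ vanishes on $h(I_s)=\supp(s)$, giving $I_s \subseteq k(\supp(s))$.

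The substance is the reverse inclusion $k(h(I_s)) \subseteq I_s$, which is the well-known fact that closed ideals of $\mathrm{C}_0(\Omega)$ are hull--kernel closed; I would either cite this correspondence or argue directly as follows. Fix $f$ vanishing on $\supp(s)$ and $\varepsilon>0$, and put $K \defeq \{x \mid |f(x)| \geq \varepsilon\}$, which is compact (as $f \in \mathrm{C}_0(\Omega)$) and disjoint from $\supp(s)$. For each $x \in K$ we have $x \notin h(I_s)$, so there is $g_x \in I_s$ with $g_x(x)\neq 0$; replacing $g_x$ by $\overline{g_x}g_x \in I_s$ we may take $g_x \geq 0$ with $g_x(x)>0$. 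By compactness finitely many of the open sets $\{g_{x_i}>0\}$ cover $K$, so $g \defeq \sum_i g_{x_i} \in I_s$ satisfies $g \geq 0$ and $g \geq \delta >0$ on $K$. Then $f_n \defeq f\cdot\frac{ng}{1+ng}$ lies in $I_s$, being $g$ times the $\mathrm{C}_0(\Omega)$-function $f\cdot\frac{n}{1+ng}$, and $f-f_n = f\cdot\frac{1}{1+ng}$ satisfies $|f-f_n| < \varepsilon$ off $K$ and $|f-f_n|\leq \|f\|/(1+n\delta)$ on $K$. Letting $n\to\infty$ gives $\mathrm{dist}(f,I_s)\leq \varepsilon$, and since $I_s$ is closed and $\varepsilon$ is arbitrary, $f \in I_s$.

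The main obstacle is exactly this reverse inclusion: for a function that merely vanishes on $\supp(s)$ one must manufacture enough genuine members of $I_s$ to approximate it uniformly. The compactness of the super-level sets $\{|f|\geq\varepsilon\}$ of $f\in\mathrm{C}_0(\Omega)$ together with the multiplier trick $\frac{ng}{1+ng}$ is what supplies the approximants inside $I_s$, after which the closedness established at the outset upgrades approximation to actual membership.
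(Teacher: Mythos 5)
Your proof is correct, and it amounts to proving from scratch exactly what the paper's proof cites as known. The paper's argument is two lines long: it invokes the classification of closed ideals of $\mathrm{C}_0(\Omega)$ (for every closed ideal $I$ there is a unique closed set $M$ with $I = \{f \mid f|_M = 0\}$) and then identifies $M$ with $\supp(s)$ by the same two easy containment arguments that open your proof. You instead observe directly from \cref{defsupport} that $\supp(s)$ is the hull of $I_s$, i.e.\ the common zero set of all functions in $I_s$, note that the inclusion $I_s \subseteq \{f \mid f|_{\supp(s)} = 0\}$ is then trivial, and prove the nontrivial reverse inclusion by hand: compactness of the superlevel set $\{|f| \geq \varepsilon\}$, the positivization $g_x \mapsto \overline{g_x}g_x$ (which also makes the argument valid for $\K = \C$, and is the identity for $\K = \R$), and the multiplier trick $ng/(1+ng)$ produce approximants inside $I_s$, after which norm-closedness of $I_s$ (correctly established at the outset from $\|fs\| \leq \|f\|\|s\|$) finishes the proof. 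What the paper's route buys is brevity, since it leans on a standard structure theorem; what yours buys is self-containedness, as it is in effect a proof of the relevant half of the hull--kernel correspondence rather than a citation of it. Beyond this, the two proofs follow the same reduction to the ideal theory of $\mathrm{C}_0(\Omega)$.
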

\begin{proof}
Let $s \in \Gamma$. Since $I_s$ is a closed ideal in $\mathrm{C}_0(\Omega)$, we find a unique closed subset $M$ such that $f|_M = 0$ if and only if $f \in I_s$. It is clear that $\supp(s) \subset M$.
On the other hand, if $x \in \Omega \setminus \supp(s)$, we find $f \in \mathrm{C}_0(\Omega)$ with $f(x) \neq 0$ but $fs = 0$. Then $f|_M = 0$ which shows $x \notin M$.
\end{proof}
The following is a first characterization of $T$-homomorphisms extending Theorem 9.5 of \cite{AbArKi1992}.
\begin{theorem}\label{supportlemma}
Let $\varphi \colon \Omega \longrightarrow \Omega$ be a homeomorphism of a locally compact space $\Omega$ and $\Gamma$ and $\Lambda$ Banach modules over $\mathrm{C}_0(\Omega)$. For $\mathcal{T} \in \mathscr{L}(\Gamma,\Lambda)$ the following assertions are equivalent.
\begin{enumerate}[(a)]
	\item $\mathcal{T}$ is a $T_\varphi$-homomorphism.
	\item $T_\varphi I_s \subset I_{\mathcal{T}s}$ for every $s \in \Gamma$.
	\item $\supp(\mathcal{T}s) \subset \varphi(\supp(s))$ for each $s \in \Gamma$.
\end{enumerate}
\end{theorem}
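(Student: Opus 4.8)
The plan is to prove the cycle (a)$\Rightarrow$(b)$\Rightarrow$(c)$\Rightarrow$(a). First I would dispatch (a)$\Rightarrow$(b): if $f \in I_s$, then $fs = 0$, so by (a) we get $(T_\varphi f)\mathcal{T}s = \mathcal{T}(fs) = 0$, i.e.\ $T_\varphi f \in I_{\mathcal{T}s}$. For (b)$\Rightarrow$(c) I would translate everything into supports via \cref{descriptionsupport}. Since $T_\varphi$ is an algebra isomorphism with $T_\varphi f = f \circ \varphi^{-1}$ and $\varphi$ is a homeomorphism, a function $g = T_\varphi f$ vanishes on the closed set $\varphi(\supp(s))$ exactly when $f$ vanishes on $\supp(s)$; hence \cref{descriptionsupport} gives $T_\varphi I_s = \{g \in \mathrm{C}_0(\Omega) \mid g|_{\varphi(\supp(s))} = 0\}$. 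Condition (b) then reads: every $g$ vanishing on $\varphi(\supp(s))$ also vanishes on $\supp(\mathcal{T}s)$. A Urysohn argument in the locally compact Hausdorff space $\Omega$ (separating a point from a closed set not containing it) upgrades this inclusion of ideals to the reverse inclusion of zero sets, i.e.\ $\supp(\mathcal{T}s) \subset \varphi(\supp(s))$, which is (c).

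The substance is (c)$\Rightarrow$(a), and my first move is to remove $\varphi$ by twisting the target. I would define a new action of $\mathrm{C}_0(\Omega)$ on $\Lambda$ by $f \bullet t \defeq (T_\varphi f)t$; since $T_\varphi$ is an isometric algebra automorphism this turns $\Lambda$ into a non-degenerate Banach module $\Lambda^\varphi$, and a direct computation with \cref{descriptionsupport} shows that its supporting ideals satisfy $\supp_{\Lambda^\varphi}(t) = \varphi^{-1}(\supp_\Lambda(t))$. Under this identification (c) becomes $\supp_{\Lambda^\varphi}(\mathcal{T}s) \subset \supp(s)$ and (a) becomes the statement that $\mathcal{T}\colon \Gamma \to \Lambda^\varphi$ is an \emph{ordinary} module homomorphism. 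Thus it suffices to treat the non-dynamical case $\varphi = \mathrm{id}$, which is exactly the extension of Theorem 9.5 of \cite{AbArKi1992} alluded to in the statement.

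For $\varphi = \mathrm{id}$ I would fix $s \in \Gamma$ and consider the bounded operator $R\colon \mathrm{C}_0(\Omega) \to \Lambda$, $R(f) = \mathcal{T}(fs)$, which by hypothesis satisfies $\supp(R(f)) \subset \supp(fs) \subset \supp(f)$. Writing $e = \mathcal{T}s$, the goal is $R(f) = fe$. Localising at a point $x$ through the quotient $q_x\colon \Lambda \to \Lambda / \overline{\mathrm{lin}}\,\{ht \mid h \in \mathrm{C}_0(\Omega),\ h(x) = 0,\ t \in \Lambda\}$, the composition $q_x \circ R$ annihilates every $f$ vanishing on a neighbourhood of $x$ (since then $x \notin \supp(R(f))$); as such $f$ are sup-norm dense in $\{f \mid f(x) = 0\}$ and $q_x \circ R$ is bounded, $q_x \circ R$ kills all of $\{f \mid f(x) = 0\}$ and therefore factors through the evaluation $f \mapsto f(x)$. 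Passing to the unitisation $\mathrm{C}(K)$ over the one-point compactification (where the modules become unital) lets me identify the resulting constant as $q_x(e)$, so that $q_x(\mathcal{T}(fs) - f\mathcal{T}s) = 0$ for every $x$ and every $f$. For the (locally) convex, i.e.\ AM-type, modules that are the focus of Subsection 4.1 the maps $(q_x)_x$ are jointly injective (the norm is the supremum of the fibre norms), so this forces $\mathcal{T}(fs) = f\mathcal{T}s$ on the dense set of $s$ of the form $gs'$, and then everywhere by boundedness and non-degeneracy.

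The hard part is precisely this last upgrade from the pointwise identity $q_x(\,\cdot\,) = 0$ to the exact equation $\mathcal{T}(fs) = f\mathcal{T}s$. The support condition is only a first-order constraint: an element may have a point in its support yet be annihilated in norm by a shrinking sequence of bumps there (think of $t \mapsto t$ near $0$), so no amount of pure support bookkeeping can by itself produce the equality, and a partition-of-unity estimate collapses by linearity. What rescues it is boundedness, which here plays the role that \enquote{a measure supported at a point is a point mass} plays in the scalar case, excluding such derivative-type behaviour. For modules that are not of sup-norm type the joint injectivity of the $q_x$ can fail (e.g.\ for $\mathrm{L}^1$, where it degenerates), and the conclusion is genuinely the content of the Banach--Stone/disjointness-preservation theory behind Theorem 9.5 of \cite{AbArKi1992}; there I would invoke that result, now applicable since $\varphi = \mathrm{id}$, rather than re-deriving it, to close the general case.
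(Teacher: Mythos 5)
Your cycle (a)$\Rightarrow$(b)$\Rightarrow$(c) is fine, and your twisting of the target module ($f \bullet t \defeq T_\varphi f \cdot t$) to reduce (c)$\Rightarrow$(a) to the non-dynamical case $\varphi = \mathrm{id}_\Omega$ is exactly the reduction the paper performs. The gap lies in how you close that non-dynamical case. Your own localization argument through the quotients $q_x$ is, as you yourself admit, only valid for AM-type modules, whereas the theorem is stated (and later needed, e.g.\ for AL-modules in \cref{preparationAL}) for arbitrary Banach modules over $\mathrm{C}_0(\Omega)$. Your fallback is to invoke Theorem 9.5 of \cite{AbArKi1992} as \enquote{now applicable since $\varphi = \mathrm{id}$} --- but that theorem applies only when the underlying space is \emph{compact}. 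The statement you are proving is precisely its extension to locally compact $\Omega$ (and to general $\varphi$), so citing it for non-compact $\Omega$ is circular: the case \enquote{$\Omega$ locally compact, $\varphi = \mathrm{id}_\Omega$, general Banach module} is left unproven.

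What is missing is the reduction from locally compact to compact, which is the actual technical content of the paper's proof. One views $\Gamma$ and $\Lambda$ as unital Banach modules over $\mathrm{C}(K)$, where $K = \Omega \cupdot \{\infty\}$ is the one-point compactification (\cref{extendedmodule}), and must then upgrade the hypothesis $\supp(\mathcal{T}s) \subset \supp(s)$ (supports taken inside $\Omega$) to $\supp_K(\mathcal{T}s) \subset \supp_K(s)$ (supports taken inside $K$) before the compact case can be applied. Since $\overline{\supp(s)}^K \subset \supp_K(s) \subset \supp(s) \cup \{\infty\}$, the only obstruction is the point at infinity, and it is a genuine one: you must show that $\infty \notin \supp_K(s)$ implies $\infty \notin \supp_K(\mathcal{T}s)$. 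The paper does this by characterizing $\infty \notin \supp_K(s)$ as the existence of $g \in \mathrm{C}_0(\Omega)$ with $gs = s$, equivalently $(e_i g - e_i)s = 0$, i.e.\ $(e_i g - e_i)|_{\supp(s)} = 0$ for all large $i$, where $(e_i)_{i \in I}$ is an approximate unit; this last condition depends only on the set $\supp(s) \subset \Omega$, so the inclusion $\supp(\mathcal{T}s) \subset \supp(s)$ transfers it to $\mathcal{T}s$ and yields $g \cdot \mathcal{T}s = \mathcal{T}s$, hence $\infty \notin \supp_K(\mathcal{T}s)$. Without this step (or a substitute for it) your proof of (c)$\Rightarrow$(a) is incomplete whenever $\Omega$ is not compact.
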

For the proof we need the following lemma.
\begin{lemma}\label{extendedmodule}
Let $\Omega$ be a locally compact space and $\Gamma$ be a Banach module over $\mathrm{C}_0(\Omega)$. Let $K = \Omega \cupdot \{\infty\}$ be the one-point compactification of $\Omega$. The mapping 
\begin{align*}
\mathrm{C}(K) \times \Gamma \longrightarrow \Gamma, \quad (f,s) \mapsto (f-f(\infty)\mathbbm{1})|_\Omega s + f(\infty)s
\end{align*}
turns $\Gamma$ into a (unitary) Banach module over $\mathrm{C}(K)$. 
\end{lemma}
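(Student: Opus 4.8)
The plan is to decompose each $f \in \mathrm{C}(K)$ as $f = (f - f(\infty)\mathbbm{1}) + f(\infty)\mathbbm{1}$. Since $f - f(\infty)\mathbbm{1} \in \mathrm{C}(K)$ vanishes at $\infty$, its restriction $g_f \defeq (f - f(\infty)\mathbbm{1})|_\Omega$ lies in $\mathrm{C}_0(\Omega)$, and the proposed operation takes the form $f \cdot s = g_f s + f(\infty) s$, where $g_f s$ denotes the given $\mathrm{C}_0(\Omega)$-action. As $f \mapsto g_f$ and $f \mapsto f(\infty)$ are both linear, bilinearity of the operation is immediate, as is the unit law: $g_{\mathbbm{1}} = 0$ forces $\mathbbm{1} \cdot s = s$, so the module will indeed be unitary.

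For the associativity law $(fh)\cdot s = f \cdot (h \cdot s)$ I would first record the algebraic identity $g_{fh} = g_f g_h + f(\infty) g_h + h(\infty) g_f$, obtained by expanding the product $fh$ in $\mathrm{C}(K)$ via the two decompositions and restricting to $\Omega$ (the term $f(\infty)h(\infty)\mathbbm{1}$ is precisely what is subtracted off). Expanding $f \cdot (h \cdot s) = g_f(g_h s + h(\infty)s) + f(\infty)(g_h s + h(\infty)s)$ and using that $g_f, g_h \in \mathrm{C}_0(\Omega)$ satisfy $g_f(g_h s) = (g_f g_h)s$, one sees that both sides equal the same four-term sum $g_f g_h s + f(\infty)g_h s + h(\infty)g_f s + f(\infty)h(\infty)s$; combined with $(fh)(\infty) = f(\infty)h(\infty)$ this yields the claim. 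These steps are routine once the decomposition is in place.

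The main obstacle is the contractivity estimate $\|f \cdot s\| \le \|f\|_{\mathrm{C}(K)}\|s\|$. Applying the triangle inequality directly to $g_f s + f(\infty)s$ is wasteful, since $\|g_f\|_\infty = \sup_{x \in \Omega}|f(x) - f(\infty)|$ can be as large as $2\|f\|_{\mathrm{C}(K)}$, yielding only a factor $3$. To obtain the sharp constant I would exploit non-degeneracy: let $(e_i)$ be an approximate unit of $\mathrm{C}_0(\Omega)$ with $\|e_i\|_\infty \le 1$, so that $e_i s \to s$ by the remark following the definition of non-degeneracy. Then $(f|_\Omega e_i) s = g_f(e_i s) + f(\infty)(e_i s) \to g_f s + f(\infty) s = f \cdot s$, while each $f|_\Omega e_i$ lies in $\mathrm{C}_0(\Omega)$ with $\|f|_\Omega e_i\|_\infty \le \|f|_\Omega\|_\infty \le \|f\|_{\mathrm{C}(K)}$. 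Hence $\|(f|_\Omega e_i) s\| \le \|f\|_{\mathrm{C}(K)}\|s\|$ for every $i$, and passing to the limit gives the desired inequality. Realizing $f \cdot s$ as a limit of genuine $\mathrm{C}_0(\Omega)$-actions by functions of sup-norm at most $\|f\|_{\mathrm{C}(K)}$ is the crux of the argument.
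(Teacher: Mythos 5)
Your proof is correct and follows essentially the same route as the paper: the paper likewise treats the algebraic axioms as routine and obtains the sharp bound $\|f\cdot s\|\le\|f\|\,\|s\|$ by writing $f\cdot s=\lim_i (f|_\Omega e_i)s$ for an approximate unit $(e_i)$ of $\mathrm{C}_0(\Omega)$ with $\|e_i\|\le 1$, exactly as you do. Your explicit verification of associativity via the identity $g_{fh}=g_fg_h+f(\infty)g_h+h(\infty)g_f$ merely fills in what the paper dismisses as easy to check.
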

\begin{proof}
It is easy to check that the mapping above actually turns $\Gamma$ into a unitary module over $\mathrm{C}(K)$. Choose an approximate unit $(e_i)_{i \in I}$ for $\mathrm{C}_0(\Omega)$. Now take $f \in \mathrm{C}(K)$ and $s \in \Gamma$ and observe that
\begin{align*}
\|fs\| &= \lim_{i} \|(f-f(\infty)\mathbbm{1})|_\Omega e_i s + f(\infty)e_i s\|\\
&= \lim_{i} \|(fe_i)s\| \leq \limsup_{i} \|e_i f\| \|s\|\\
&\leq \|f\| \|s\|.
\end{align*}
This shows $\|fs\| \leq \|f\| \|s\|$ and therefore $\Gamma$ is a Banach module over $\mathrm{C}(K)$.
\end{proof}
\begin{proof}[of \cref{supportlemma}]
The equivalence of (b) and (c) is obvious by Tietze's theorem while the equivalence of (a) and (c) follows from Theorem 9.5 of \cite{AbArKi1992} if $K= \Omega$ is compact and $\varphi = \mathrm{id}_K$\footnote{Note that even though the authors work in the complex setting, their proof also works in the real case.}.\\
Now take $\Omega$ non-compact but still assume $\varphi= \mathrm{id}_\Omega$. We consider the one-point compactification $K$ of $\Omega$ and the module structure of $\Gamma$ over $\mathrm{C}(K)$ (see \cref{extendedmodule}). For $s \in \Gamma$ we denote the support of $s$ with respect to this module structure by $\supp_K(s)$. It is easy to see that 
\begin{align*}
\overline{\supp(s)}^K \subset \supp_K(s)  \subset \supp(s) \cup \{\infty\}.
\end{align*}
Let $(e_i)_{i \in I}$ be an approximate unit for $\mathrm{C}_0(\Omega)$.
It is easy to see that $\infty \notin \supp_K(s)$ if and only if there is $g \in \mathrm{C}_0(\Omega)$ with $gs = s$. But this is the case if and only if there is $i_0\in A$ with $(e_i g - e_i)s = 0$, i.e., $(e_i g - e_i)|_{\supp(s)} = 0$ for every $i \geq i_0$. Therefore, the result for non-compact $\Omega$ can be reduced to the compact case.\smallskip

Finally let $\varphi \colon \Omega \longrightarrow \Omega$ be an arbitrary homeomorphism of a locally compact space $\Omega$. Consider the module $\Lambda_{T_\phi}$ which is the space $\Lambda$ equipped with the new operation $f \cdot_{T_\phi} s \defeq T_\phi f \cdot s$ for $f \in \mathrm{C}_0(\Omega)$ and $s \in \Lambda$. Then $\mathcal{T}\in \mathscr{L}(\Gamma,\Lambda)$ is a $T_\varphi$-homomorphism if and only if $\mathcal{T} \in \mathscr{L}(\Gamma,\Lambda_{T_\phi})$ is a homomorphism of Banach modules. By the above, this is the case if and only if
\begin{align*}
\{x \in \Omega \mid \textrm{each } f \in \mathrm{C}_0(\Omega) \textrm{ with } f(x) \neq 0 \textrm{ satisfies } T_\varphi f \cdot \mathcal{T}s \neq 0\} \subset \supp(s),
\end{align*}
i.e., $\supp(\mathcal{T}s)\subset \varphi(\supp(s))$ for each $s \in \Gamma$.
\end{proof}
We now introduce dynamical Banach modules. Fix a pair $(A;T)$ of a commutative Banach algebra $A$ and a strongly continuous group representation $T \colon G \longrightarrow \mathscr{L}(A)$ of a locally compact group $G$ as algebra automorphisms of $A$. Moreover, let $S \subset G$ be a fixed closed submonoid.
\begin{definition}
An \emph{$S$-dynamical Banach module over $(A;T)$} is a pair $(\Gamma; \mathcal{T})$ consisting of a Banach module $\Gamma$ over $A$ and a monoid homomorphism $\mathcal{T}\colon S \longrightarrow \mathscr{L}(\Gamma)$ such that
\begin{enumerate}[(i)]
\item $\mathcal{T}(g)\in \mathscr{L}(\Gamma)$ is a $T(g)$-homomorphism for each $g \in S$,
\item $\mathcal{T}$ is \emph{strongly continuous}, i.e., 
	\begin{align*}
		S \longrightarrow \Gamma,\quad g \mapsto \mathcal{T}(g)s
	\end{align*}
	is continuous for every $s \in \Gamma$.
\end{enumerate}
A \emph{homomorphism} from an $S$-dynamical Banach module $(\Gamma;\mathcal{T})$ over $(A;T)$ to an $S$-dynamical Banach module $(\Lambda;\mathcal{S})$ over $(A;T)$ is a homomorphism $V \in \mathscr{L}(\Gamma,\Lambda)$ of Banach modules over $A$ such that the diagram
\[
\xymatrix{
\Gamma \ar[d]_-{\mathcal{T}(g)}\ar[r]^{V} & \Lambda\ar[d]^-{\mathcal{S}(g)}\\
\Gamma \ar[r]_{V} & \Lambda
}
\]
commutes for each $g \in S$.
\end{definition}

Starting with the topological case, we now show that dynamical Banach bundles induce dynamical Banach modules.
\begin{example}\label{exampleThom}
Consider an $S$-dynamical Banach bundle $(E;\Phi)$ over a topological $G$-dynamical system $(\Omega;\varphi)$. For each $g \in G$ the Koopman operator $T_\varphi(g) \defeq T_{\varphi_g}$ is an auotmorphism of $\mathrm{C}_0(\Omega)$ (see \cref{examplemorphover} (i)) and $g \mapsto T_\varphi(g)$ defines a representation of $G$ as operators on $\mathrm{C}_0(\Omega)$, called the \emph{Koopman representation} which is strongly continuous (this is probably well-known, but also a special case of \cref{inducedsystem1} below). \\
By setting $\mathcal{T}_{\Phi}(g) \defeq \mathcal{T}_{\Phi_g}$ for each $g \in S$ we obtain a $T_\varphi(g)$-homomorphism $\mathcal{T}_\Phi(g) \in \mathscr{L}(\Gamma_0(E))$ for each $g \in S$ (see \cref{examplemorphover}). We call the monoid representation $\mathcal{T}_\Phi$ the \emph{weighted Koopman representation} of $(E;\Phi)$.
\end{example}
\begin{proposition}\label{inducedsystem1}
Let $(\Omega;\varphi)$ be a topological $G$-dynamical system, $A= \mathrm{C}_0(\Omega)$ and $T=T_\varphi$ the Koopman representation of $(\Omega;\varphi)$.
\begin{enumerate}[(i)]
\item If $(E;\Phi)$ is an $S$-dynamical Banach bundle over $(\Omega;\varphi)$, then the weighted Koopman representation $\mathcal{T}_\Phi$ defines an $S$-dynamical Banach module over $(\mathrm{C}_0(\Omega);T_\varphi)$. 
\item For a morphism $\Theta \colon (E;\Phi) \longrightarrow (F;\Psi)$ of $S$-dynamical Banach bundles over $(\Omega;\varphi)$ the operator $V_\Theta \in  \mathscr{L}(\Gamma_0(E),\Gamma_0(F))$ defined by
\begin{align*}
V_\Theta s \defeq \Theta \circ s \quad \textrm{ for } s\in \Gamma_0(E)
\end{align*}
is a homomorphism $V_\Theta \in  \mathscr{L}(\Gamma_0(E),\Gamma_0(F))$ between the $S$-dynamical Banach modules $(\Gamma_0(E);\mathcal{T}_\Phi)$ and $(\Gamma_0(F);\mathcal{T}_\Psi)$.
\end{enumerate}
\end{proposition}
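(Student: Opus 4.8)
For part~(i), the $\mathrm{C}_0(\Omega)$-module structure on $\Gamma_0(E)$ is provided by \cref{examplesections}, and by \cref{examplemorphover}~(i) each operator $\mathcal{T}_\Phi(g) = \mathcal{T}_{\Phi_g}$ is a bounded $T_\varphi(g)$-homomorphism on $\Gamma_0(E)$ with $\|\mathcal{T}_\Phi(g)\| \leq \|\Phi_g\|$. Hence condition~(i) in the definition of a dynamical Banach module is already in hand, and it remains to verify that $\mathcal{T}_\Phi$ is a monoid homomorphism and that it is strongly continuous---the latter being the substantial point. The monoid law is a direct computation: since $\Phi$ is a monoid homomorphism and $\varphi$ is a $G$-action, one has $\Phi_{gh} = \Phi_g \circ \Phi_h$, $\Phi_e = \mathrm{id}_E$ and $\varphi_{gh}^{-1} = \varphi_h^{-1} \circ \varphi_g^{-1}$, so that $\mathcal{T}_\Phi(gh)s = \Phi_g \circ \Phi_h \circ s \circ \varphi_h^{-1} \circ \varphi_g^{-1} = \mathcal{T}_\Phi(g)\mathcal{T}_\Phi(h)s$ and $\mathcal{T}_\Phi(e) = \mathrm{Id}$.

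The \textbf{main obstacle} is the strong continuity, that is, the uniform (sup-norm) convergence $\mathcal{T}_\Phi(g)s \to \mathcal{T}_\Phi(g_0)s$ as $g \to g_0$ for fixed $s \in \Gamma_0(E)$ and $g_0 \in S$. I would first observe that the map $\Psi \colon S \times \Omega \longrightarrow E$, $(g,x) \mapsto \Phi_g(s(\varphi_g^{-1}(x)))$, is jointly continuous, since it factors through the jointly continuous maps $(g,x) \mapsto \varphi_g^{-1}(x)$ (continuity of inversion in $G$ together with joint continuity of the action), $s$, and the jointly continuous map $(g,v) \mapsto \Phi_g v$ of \cref{defdynamicalbundle}. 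As $\Psi(g,x)$ and $\Psi(g_0,x)$ lie in the same fiber $E_x$, the difference $(g,x) \mapsto \Psi(g,x) - \Psi(g_0,x)$ is a continuous $E$-valued map; composing with the upper semicontinuous fiber norm shows that $N(g,x) \defeq \|\Psi(g,x) - \Psi(g_0,x)\|$ is upper semicontinuous on $S \times \Omega$ and vanishes identically on $\{g_0\} \times \Omega$. It is precisely here that mere upper semicontinuity of the norm suffices, as we only need upper bounds for $N$ near its zero value.

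To convert this into a uniform estimate I would separate a compact core from the tail. Fix a compact neighborhood $W$ of $g_0$ in $S$ and set $M \defeq \sup_{g \in W} \|\Phi_g\| < \infty$ by local boundedness. Given $\varepsilon > 0$, use $s \in \Gamma_0(E)$ to pick a compact $C \subset \Omega$ with $\|s(y)\| \leq \varepsilon/(4(M+1))$ for $y \notin C$, and put $C' \defeq \varphi(W \times C)$, which is compact as the continuous image of $W \times C$. For $x \notin C'$ and $g \in W$ one has $\varphi_g^{-1}(x) \notin C$, hence $\|\Psi(g,x)\| \leq \varepsilon/4$, and the same for $g_0 \in W$, giving $N(g,x) \leq \varepsilon/2$. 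On the compact set $C'$ the upper semicontinuity of $N$ at each point $(g_0,x_0)$ yields a product neighborhood $V_{x_0} \times U_{x_0}$ on which $N < \varepsilon/2$; covering $C'$ by finitely many $U_{x_1}, \dots, U_{x_n}$ and intersecting the corresponding $V_{x_i}$ produces a neighborhood $V$ of $g_0$ with $N(g,x) < \varepsilon/2$ for all $g \in V$ and $x \in C'$. Consequently $\|\mathcal{T}_\Phi(g)s - \mathcal{T}_\Phi(g_0)s\| = \sup_{x \in \Omega} N(g,x) \leq \varepsilon/2$ for $g \in V \cap W$, which proves strong continuity.

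Part~(ii) is then routine. That $V_\Theta s = \Theta \circ s$ lies in $\Gamma_0(F)$ and defines a bounded operator follows from $\Theta$ being a Banach bundle morphism over $\mathrm{id}_\Omega$: then $\Theta \circ s$ is a continuous section with $\|\Theta(s(x))\| \leq \|\Theta\| \, \|s(x)\|$, so it vanishes at infinity. Fiberwise linearity of $\Theta$ gives $V_\Theta(fs) = f \, V_\Theta s$, i.e., $V_\Theta$ is a $\mathrm{C}_0(\Omega)$-module homomorphism. Finally, the intertwining relation $\Theta \circ \Phi_g = \Psi_g \circ \Theta$ defining a morphism of dynamical Banach bundles gives $V_\Theta(\mathcal{T}_\Phi(g)s) = \Theta \circ \Phi_g \circ s \circ \varphi_g^{-1} = \Psi_g \circ \Theta \circ s \circ \varphi_g^{-1} = \mathcal{T}_\Psi(g)(V_\Theta s)$, so the required diagram commutes.
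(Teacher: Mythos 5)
Your proof is correct, but it takes a genuinely different route for the key point, the strong continuity of $\mathcal{T}_\Phi$. The paper first settles the compact case: for $s \in \Gamma(E)$ it uses joint continuity of $(h,y)\mapsto \Phi_h s(y)$, the section neighborhood base of \cref{topology}, and compactness of $K$ to obtain a uniform estimate, and then reduces the general locally compact case to the compact one via the one-point compactification, invoking \cref{locallycomp} (extension of the dynamics to $(\tilde{E};\tilde{\Phi})$ over $(K;\tilde{\varphi})$) and \cref{moduleextension} (identification of $\Gamma_0(E)$ with $\Gamma(\tilde{E})$). You instead argue directly on $\Omega$: you form $N(g,x)=\|\Psi(g,x)-\Psi(g_0,x)\|$ inside the fibers (legitimate, since fiberwise subtraction is continuous on $E\times_\Omega E$ by the bundle axioms and the norm is upper semicontinuous), kill the tail outside the compact set $C'=\varphi(W\times C)$ using local boundedness of $\Phi$ together with the vanishing of $s$ at infinity, and handle the compact core by upper semicontinuity of $N$ at its zeros plus a finite cover. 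In effect you inline the content of \cref{locallycomp}---whose proof is precisely such a tail estimate---into the continuity argument, and you replace the paper's use of the neighborhood base from \cref{topology} by the upper semicontinuous norm and the fibered subtraction trick. The paper's reduction buys reusability: \cref{locallycomp} and \cref{moduleextension} are needed again later (e.g., in \cref{homismorph}), and the $\varepsilon$-bookkeeping stays in a compact setting. Your argument buys self-containedness: it avoids constructing the extended bundle altogether and makes explicit which bundle axioms (continuity of the fiber operations, upper semicontinuity of the norm) and which dynamical hypotheses (joint continuity, local boundedness) enter at which step. Your verification of the monoid law and of part (ii), which the paper declares obvious, is the routine check one would expect and is correct.
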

For the proof we need the following lemma.
\begin{lemma}\label{locallycomp}
Let $(E;\Phi)$ be an $S$-dynamical Banach bundle over $(\Omega;\varphi)$. Let $K\defeq \Omega \cupdot \{\infty\}$ be the one-point compactification of $\Omega$ and $\tilde{E}$ the extended Banach bundle of \cref{extendedbundle}. Then the following assertions hold.
\begin{enumerate}[(i)]
\item The mapping
\begin{align*}
\tilde{\varphi}\colon G \times K \longrightarrow K,\quad (g,x) \mapsto \begin{cases} \infty& x = \infty,\\
\varphi(g,x) & x \neq \infty,
\end{cases}
\end{align*}
is continuous.
\item Setting
\begin{align*}
\tilde{\Phi}\colon S \times \tilde{E}\longrightarrow \tilde{E},\quad (g,v) \mapsto \begin{cases} 0 & v \in E_\infty,\\
\Phi_g v & v \in E,
\end{cases}
\end{align*}
defines an $S$-dynamical Banach bundle $(\tilde{E};\tilde{\Phi})$ over $(K;\tilde{\varphi})$.
\end{enumerate}
\end{lemma}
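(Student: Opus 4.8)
The plan is to prove the two assertions in turn, using the continuity of $\tilde{\varphi}$ from (i) as an input for the joint-continuity argument in (ii), and invoking \cref{compactlocallybounded} to sidestep checking local boundedness by hand, since the new base space $K$ is compact. Throughout I write $0_\infty$ for the single point of the trivial fiber $\tilde{E}_\infty = \{0\}$.

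For part (i), continuity of $\tilde{\varphi}$ is clear at every point $(g_0,x_0)$ with $x_0 \in \Omega$: since $\Omega$ is open in $K$ and $\varphi$ is continuous, $\tilde{\varphi}$ agrees with $\varphi$ on a neighborhood of such a point. The only points needing attention are those of the form $(g_0,\infty)$, where $\tilde{\varphi}(g_0,\infty)=\infty$. Here I would exploit that
\[
\Psi \colon G \times \Omega \longrightarrow G \times \Omega, \quad (g,x) \mapsto (g,\varphi_g(x)),
\]
is a homeomorphism, with continuous inverse $(g,y)\mapsto(g,\varphi_{g^{-1}}(y))$, both maps being continuous because $\varphi$ and inversion in $G$ are. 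Given a compact $L \subset \Omega$ (so $K \setminus L$ is a basic neighborhood of $\infty$) and a compact neighborhood $V$ of $g_0$ in $G$, the set $\Psi^{-1}(V \times L)$ is compact, hence so is its projection $L' \subset \Omega$ onto the second factor. By construction, $\varphi_g(x)\in L$ with $g \in V$ forces $x \in L'$; equivalently $\tilde{\varphi}\bigl(V \times (K \setminus L')\bigr)\subset K \setminus L$. Since $V \times (K \setminus L')$ is a neighborhood of $(g_0,\infty)$, this gives continuity there. This properness-type argument is the heart of (i).

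For part (ii), because $K$ is compact and $\tilde{\varphi}$ is a continuous $G$-action by (i), \cref{compactlocallybounded} reduces the claim to showing that $\tilde{\Phi}$ is a monoid homomorphism satisfying conditions (i) and (ii) of \cref{defdynamicalbundle}. That $\tilde{\Phi}$ is a monoid homomorphism ($\tilde{\Phi}_{gh}=\tilde{\Phi}_g\circ\tilde{\Phi}_h$ and $\tilde{\Phi}_e=\mathrm{id}_{\tilde{E}}$) is immediate from the corresponding identities for $\Phi$ on $E$ together with the fact that the whole fiber $\tilde{E}_\infty$ is sent to $0_\infty$. Likewise each $\tilde{\Phi}_g$ intertwines the projections over $\tilde{\varphi}_g$, restricts to $\Phi_g|_{E_x}$ over $\Omega$ and to the zero map on $\tilde{E}_\infty$, and satisfies $\|\tilde{\Phi}_g\|=\|\Phi_g\|<\infty$, so that the fiberwise conditions of \cref{bundlemorph} hold. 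The substantive point is the joint continuity of $(g,v)\mapsto\tilde{\Phi}_g v$, which in particular yields the continuity of each $\tilde{\Phi}_g$. Since $E$ is open in $\tilde{E}$ and carries its original topology as a subspace, at a point $(g_0,v_0)$ with $v_0 \in E$ joint continuity follows at once from condition (ii) of \cref{defdynamicalbundle} for $(E;\Phi)$. The remaining case is a point $(g_0,0_\infty)$: taking a net $(g_\alpha,v_\alpha)\to(g_0,0_\infty)$, I would show $\tilde{\Phi}_{g_\alpha}v_\alpha\to 0_\infty$ by checking that it eventually lies in every basic neighborhood of $0_\infty$, which amounts to making the norm small and the base point escape every compact subset of $\Omega$. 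For the norm, $\|\tilde{\Phi}_{g_\alpha}v_\alpha\|\le\|\Phi_{g_\alpha}\|\,\|v_\alpha\|\le M\,\|v_\alpha\|\to 0$, where $M$ bounds $\|\Phi_g\|$ on a compact neighborhood of $g_0$ by local boundedness of $\Phi$, and $\|v_\alpha\|\to 0$ because $v_\alpha\to 0_\infty$. For the base point, $v_\alpha\to 0_\infty$ also gives $p_{\tilde{E}}(v_\alpha)\to\infty$ in $K$, so continuity of $\tilde{\varphi}$ from (i) yields $p_{\tilde{E}}(\tilde{\Phi}_{g_\alpha}v_\alpha)=\tilde{\varphi}_{g_\alpha}(p_{\tilde{E}}(v_\alpha))\to\infty$, i.e.\ the base point eventually leaves any prescribed compact set.

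The main obstacle is precisely this last continuity at the zero fiber over $\infty$: one must simultaneously control the fiber norm, via local boundedness of the cocycle $\Phi$, and the base point, via the properness argument of part (i). Part (i) is therefore not a mere preliminary but an essential ingredient, and the interplay between escape-to-infinity in the base and the uniform bound on the operator norms is where the real content of the lemma lies.
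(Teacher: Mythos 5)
Your proposal is correct and takes essentially the same route as the paper: your $L'$ is exactly the paper's $V^{-1}\cdot L$, and your net argument at $(g_0,0_\infty)$ combines local boundedness of $\Phi$ (norm control) with part (i) (base-point escape out of every compact set), which is precisely the paper's explicit computation with the neighborhoods $U(L,\varepsilon)$. The only cosmetic differences are that the paper argues with explicit neighborhoods rather than nets, and it leaves the routine verifications (monoid law, fiberwise conditions, boundedness via $\|\tilde{\Phi}_g\|=\|\Phi_g\|$) implicit where you invoke \cref{compactlocallybounded}.
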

\begin{proof}
If $g \in G$ and $L$ is a compact subset of $\Omega$, we choose a compact neighborhood $V$ of $g$ and set $U \defeq (V^{-1} \cdot L)^c$. Then $U$ is cocompact with $hy \notin L$ for all $h \in V$ and $y \in U$. This shows (i).\smallskip

Now take $\varepsilon >0$ and assume that $g \in S$. Since $\Phi$ is locally bounded, we find a $\delta > 0$ with $\|\Phi_h\|<\frac{1}{\delta}$ for every $h \in V \cap S$.
For $v \in E$ with $\|v\| < \delta\varepsilon$ and $p_E(v) \in U$ and $h \in V \cap S$ we then have $p_{\tilde{E}}(\Phi_h v) \notin L$ and $\|\Phi_h v\| < \varepsilon$, i.e., $\Phi_h v \in U(L,\varepsilon)$ in the notation of \cref{extendedbundle}. This shows that $\tilde{\Phi}$ is jointly continuous.
\end{proof}

\begin{proof}[of \cref{inducedsystem1}]
We first prove continuity of the weighted Koopman representation in the case of a compact space $\Omega = K$. Fix $s \in \Gamma(E)$ and let $g \in S$ and $\varepsilon > 0$. For $x \in K$ the set
\begin{align*}
V \defeq V(\Phi_g \circ s \circ \varphi_{g^{-1}},K, \varepsilon) \defeq \{v \in E \mid \|v - \Phi_g s(g^{-1}(p_E(v)))\| < \varepsilon\}
\end{align*}
is a neighborhood of $\Phi_g s(g^{-1}x)$. Since the mapping
\begin{align*}
S \times K \longrightarrow E, \quad (h,y) \mapsto \Phi_h s(y)
\end{align*} 
is continuous as a composition of the continuous mappings
\begin{align*}
&S \times K \longrightarrow S \times E, \quad (h,y) \mapsto (h,s(y)),\\
&S \times E \longrightarrow E, \quad (h,v) \mapsto \Phi_h v,
\end{align*}
we find a neighborhood $O\subset S$ of $g$ and a neighborhood $U\subset K$ of $g^{-1}x$ such that $\Phi_h s(y) \in V$ for every $h \in O$ and $y \in U$, i.e., 
\begin{align*}
\|\Phi_h s(y) - \Phi_g s(g^{-1}hy))\| < \varepsilon.
\end{align*}
By compactness of $K$ we thus find a neighborhood $W \subset S$ of $g$ with
\begin{align*}
\sup_{y \in K}\|\Phi_h s(y) - \Phi_g s(g^{-1}hy))\| < \varepsilon
\end{align*}
for all $h \in W$. But then
\begin{align*}
\sup_{y \in K} \|\Phi_h s(h^{-1}y) - \Phi_g s(g^{-1}y))\| = \sup_{y \in K}\|\Phi_h s(y) - \Phi_g s(g^{-1}hy))\| < \varepsilon
\end{align*}
for each $h \in W$.\\
The general case of (i) now follows from \cref{locallycomp} and \cref{moduleextension} and part (ii) is obvious.
\end{proof}

\begin{example}\label{examplemeasurable}
Let $G$ carry the discrete topology, $(X;\varphi)$ be a measure-preserving $G$-dynamical system, $A= \mathrm{L}^\infty(X)$ and $T=T_\varphi$ the induced \emph{Koopman representation} on $\mathrm{L}^\infty(X)$, i.e., $T_\varphi(g) \defeq T_{\varphi_g}$ for every $g \in G$.\\
Then every $S$-dynamical Banach bundle $(E;\Phi)$ over $(X;\varphi)$ induces a \emph{weighted Koopman representation} $\mathcal{T}_\Phi$ on $\Gamma^1(E)$ via $T_{\Phi}(g) \defeq T_{\Phi_g}$ for $g \in S$ which defines an $S$-dynamical Banach module over $(\mathrm{L}^\infty(X);T_\varphi)$. Moreover, if $\Theta \colon (E;\Phi) \longrightarrow (F;\Psi)$ is a morphism of $S$-dynamical Banach bundles over $(X;\varphi)$, then $V_\Theta s \defeq \Theta \circ s$ for $s \in \Gamma^1(E)$ defines a homomorphism from $(\Gamma^1(E);\mathcal{T}_\Phi)$ to $(\Gamma^1(F);\mathcal{T}_\Psi)$. 
\end{example}

\section{AM- and AL-modules}
We have seen that topological and measurable Banach bundles induce dynamical Banach modules and that these assignments are functorial. We now describe the essential ranges of these functors.\\
For this we recall a connection between Banach modules and Banach lattices, observed by Kaijser in Proposition 2.1 of \cite{Kaij1978} and Abramovich, Arenson and Kitover in Lemma 4.6 of \cite{AbArKi1992} in the compact case. We give a new proof for the locally compact case based on Lemma 1 of \cite{Cunn1967} and also provide more details on the lattice structure. Here and in the following we write $E_+$ for the positive cone of a Banach lattice $E$. 
\begin{proposition}\label{modulelattice}
If $\Omega$ is a locally compact space, $\Gamma$ a Banach module over $\mathrm{C}_0(\Omega)$ and $s \in \Gamma$, then the submodule $\Gamma_s \defeq \overline{\mathrm{C}_0(\Omega)\cdot s}$ is a Banach lattice with positive cone $\overline{\mathrm{C}_0(\Omega)_+\cdot s}$. Moreover, we obtain the following for $f, g \in \mathrm{C}_0(\Omega,\R)$ and $h \in \mathrm{C}_0(\Omega)$,
\begin{enumerate}[(i)]
	\item $fs \leq gs$ if and only if $f|_{\supp(s)}\leq g|_{\supp(s)}$,
	\item $(fs \vee gs) = (f\vee g)s$,
	\item $(fs \wedge gs) = (f\wedge g)s$,
	\item $|hs| = |h|s$.
\end{enumerate}
If $\K= \C$, then $\Gamma_s$ is the complexification of the real Banach lattice $\overline{\mathrm{C}_0(\Omega,\R)s}$.
\end{proposition}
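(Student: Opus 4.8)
The plan is to realize $\Gamma_s$ as the completion of $\mathrm{C}_0(F)$, where $F \defeq \supp(s)$, under a suitable norm, and to transport the lattice structure of $\mathrm{C}_0(F)$ along this identification. Consider the module homomorphism $\pi\colon \mathrm{C}_0(\Omega) \to \Gamma$, $f \mapsto fs$. Its kernel is the supporting ideal $I_s$, which by \cref{descriptionsupport} equals $\{f \mid f|_F = 0\}$; hence Tietze's extension theorem yields an injective linear map $\mathrm{C}_0(F) \hookrightarrow \Gamma$, $\bar f \mapsto fs$, whose image $\mathrm{C}_0(\Omega)s$ is dense in $\Gamma_s$. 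Equipping $\mathrm{C}_0(F)$ with the norm $\|\bar f\|_s \defeq \|fs\|$ (well defined, since $fs$ depends only on $f|_F$), I identify $\Gamma_s$ isometrically with the completion of $(\mathrm{C}_0(F), \|\cdot\|_s)$, so that everything reduces to understanding this norm.

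The key step, and the main obstacle, is to show that $\|\cdot\|_s$ is a \emph{lattice norm}, i.e. $|\bar f| \le |\bar g|$ implies $\|\bar f\|_s \le \|\bar g\|_s$; equivalently $\||f|s\| = \|fs\|$. The naive idea of writing $\bar f = h \bar g$ with $|h|\le 1$ and invoking the module inequality $\|hx\| \le \|h\|_\infty\|x\|$ fails, because $h = \bar f/\bar g$ is discontinuous at the zeros of $\bar g$. I would circumvent this with the regularized quotient $h_\delta \defeq \bar f\,\overline{\bar g}/(|\bar g|^2 + \delta) \in \mathrm{C}_0(F)$ for $\delta > 0$: one has $|h_\delta| \le 1$ (using $|\bar f| \le |\bar g|$), and the elementary bound $t\delta/(t^2+\delta) \le \tfrac12\sqrt\delta$ with $t = |\bar g|$ gives the uniform convergence $h_\delta\bar g \to \bar f$ as $\delta \to 0$. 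Extending $h_\delta$ to $\mathrm{C}_0(\Omega)$ without increasing its sup-norm and applying the module inequality yields $\|(h_\delta\bar g)s\| \le \|\bar g\|_s$, while $\|(h_\delta\bar g)s - fs\| \le \|h_\delta\bar g - \bar f\|_\infty\|s\| \to 0$; letting $\delta \to 0$ gives $\|\bar f\|_s \le \|\bar g\|_s$. This is the content of Lemma 1 of \cite{Cunn1967}.

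With the lattice-norm property in hand, $(\mathrm{C}_0(F,\R), \|\cdot\|_s)$ is a normed vector lattice, since the pointwise operations satisfy the Riesz axioms and the norm is monotone; the operations are moreover uniformly continuous because $\||\bar f| - |\bar g|\|_s \le \|\bar f - \bar g\|_s$. Its completion $\Gamma_s^\R \defeq \overline{\mathrm{C}_0(\Omega,\R)s}$ is therefore a real Banach lattice, with positive cone the closure $\overline{\mathrm{C}_0(\Omega)_+ s}$ of the positive cone of the dense sublattice. The four identities are then read off from the identification with $\mathrm{C}_0(F)$: formulas (ii), (iii), (iv) hold on the dense sublattice by definition of the transported operations and extend by continuity; for (i) I would use that a dense normed sublattice $V$ of a Banach lattice satisfies $\hat V_+ \cap V = V_+$, so that $fs \le gs$ is equivalent to $(g-f)|_F \ge 0$, i.e. $f|_F \le g|_F$. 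The forward implication is also direct: if $f|_F \le g|_F$ then $(g-f)_- \in I_s$, whence $(g-f)s = (g-f)_+ s \in \mathrm{C}_0(\Omega)_+ s$.

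Finally, for $\K = \C$ I would extend the domination estimate to complex functions verbatim (the same $h_\delta$, now with $\overline{\bar g}$ the complex conjugate, still has modulus $\le 1$), obtaining $\|fs\| = \||f|s\|$ for all complex $f$. Since $fs = (\operatorname{Re}f)s + i(\operatorname{Im}f)s$ exhibits $\Gamma_s = \Gamma_s^\R \oplus i\,\Gamma_s^\R$, and since the modulus of $(\operatorname{Re}f)s + i(\operatorname{Im}f)s$ computed in the lattice complexification equals $|f|s$ (because $\sup_\theta(\operatorname{Re}f\cos\theta + \operatorname{Im}f\sin\theta) = |f|$ pointwise), the identity $\|fs\| = \||f|s\|$ says precisely that the norm of $\Gamma_s$ is the complexification norm of $\Gamma_s^\R$. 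Hence $\Gamma_s$ is the complexification of $\overline{\mathrm{C}_0(\Omega,\R)s}$, which completes the proof.
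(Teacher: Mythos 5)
Your proof is correct, but it takes a genuinely different route from the paper's. Both arguments turn on the same key estimate (Lemma 1 of \cite{Cunn1967}): if one function is dominated in modulus by another on $\supp(s)$, then the corresponding norms $\|\cdot s\|$ are ordered accordingly. You prove this with the $\delta$-regularized quotient $h_\delta = \bar f\,\overline{\bar g}/(|\bar g|^2+\delta)$, whereas the paper regularizes the quotient of the dominated by the dominating function by multiplying with an approximate unit of the ideal of functions vanishing on the zero set of the dominated function; both devices work. After this the proofs diverge. You identify $\Gamma_s$ with the completion of $(\mathrm{C}_0(\supp(s)),\|\cdot\|_s)$ via \cref{descriptionsupport}, invoke the standard fact that the completion of a normed vector lattice is a Banach lattice, and then read off (i)--(iv) essentially tautologically, handling $\K=\C$ by hand through the decomposition $\Gamma_s = \Gamma^\R_s \oplus i\,\Gamma^\R_s$. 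The paper instead defines the modulus $|fs| \defeq |f|s$ on the dense submodule, extends it continuously, verifies the axioms of a modulus in the sense of \cite{MiWo1974} (the nontrivial point being that the linear hull of the image of the modulus is all of $\Gamma_s$, checked via Cauchy sequences of positive and negative parts), and then obtains the Banach lattice structure \emph{and} the complexification statement simultaneously from the Mittelmeyer--Wolff theorem. What each approach buys: yours avoids the abstract modulus machinery and makes the lattice identities transparent; its price is the complex case, where your parenthetical \enquote{because $\sup_\theta(\operatorname{Re}f\cos\theta + \operatorname{Im}f\sin\theta) = |f|$ pointwise} conceals a small but real argument --- the pointwise supremum must be identified with the lattice supremum in $\Gamma^\R_s$ (e.g.\ by observing that the finite suprema $(\max_k(\operatorname{Re}f\cos\theta_k+\operatorname{Im}f\sin\theta_k))s$ converge in norm to $|f|s$ and that the positive cone is closed), and the norm identity $\|fs\|=\||f|s\|$ holds a priori only on the dense subspace and must be propagated by continuity of both norms. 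The paper's route treats $\K=\R$ and $\K=\C$ uniformly and thus sidesteps these complexification details entirely.
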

\begin{proof}
Take $f, g \in \mathrm{C}_0(\Omega)$ with $|g| \leq |f|$. We show that $\|fs\| \leq \|gs\|$. Set $N\defeq g^{-1}(\{0\})$ and choose an approximate unit $(e_i)_{i \in I}$ for $I_N \defeq \{h \in \mathrm{C}_0(\Omega)\mid h|_N = 0\}$ such that $e_i$ has compact support for every $i \in I$. Also define $h_i \in \mathrm{C}_0(\Omega)$ for $i \in I$ by 
\begin{align*}
	h_i(x) \defeq \begin{cases}
		e_i(x)\frac{g(x)}{f(x)},& x \notin N,\\
		0, & x \in N.
	\end{cases}
\end{align*}
Then $|h_i(x)| \leq 1$ for every $x \in \Omega$ and therefore 
\begin{align}
\|gs\| = \lim_{i} \|e_i gs\| = \lim_{i} \|h_i fs\| \leq \limsup_{i} \|h_i\| \|fs\| \leq \|fs\|.
\end{align}
We set $|fs| \defeq |f|s$ for $f \in \mathrm{C}_0(\Omega)$. By the above we obtain for $f,g \in \mathrm{C}_0(\Omega)$ 
	\begin{align}
		\||f|s - |g|s\| = \|||f| - |g||s\| \leq \||f-g|s\| = \|(f-g)s\| = \|fs - gs\|.
	\end{align}
This implies that $|\cdot| \colon \mathrm{C}_0(\Omega)s \longrightarrow \mathrm{C}_0(\Omega)s$ has a unique extension to a continuous map $|\cdot| \colon \Gamma_s \longrightarrow \Gamma_s$. The only non-trivial part in showing that this defines a modulus in the sense of Definition 1.1 of \cite{MiWo1974} is to check that the linear hull of the image $|\Gamma_s| = \overline{\mathrm{C}_0(\Omega)s}$ is the whole space $\Gamma_s$. However, if $t = \lim_{n \rightarrow \infty} f_ns \in \Gamma_s$, then---using (1) and (2) as well as the formulas for the positive and negative parts of functions (see Corollary 1 of Proposition II.1.4 of \cite{Scha1974})---it is standard to check that $((\mathrm{Re}\, f_n)_+s)_{n \in \N}$,  $((\mathrm{Re}\, f_n)_-s)_{n \in \N}$,  $((\mathrm{Im}\, f_n)_+s)_{n \in \N}$ and  $((\mathrm{Im}\, f_n)_-s)_{n \in \N}$ are Cauchy sequences and therefore converge in $\overline{\mathrm{C}_0(\Omega)_+s}$. This implies that $t$ can be written as a linear combination of elements of $\overline{\mathrm{C}_0(\Omega)_+s}$. Moreover, this shows $\overline{\mathrm{C}_0(\Omega,\R)s} = \overline{\mathrm{C}_0(\Omega)_+s} - \overline{\mathrm{C}_0(\Omega)_+s}$.

By Proposition 1.3 of \cite{MiWo1974}, we obtain that $\overline{\mathrm{C}_0(\Omega)_+s}$ is a cone and defines a partial order on $\overline{\mathrm{C}_0(\Omega,\R)s}$.  Moreover, $\|hs\| = \||hs|\|$ for every $h \in \mathrm{C}_0(\Omega)$ by (1) and thus $\|t\| = \||t|\|$ for every $t \in \Gamma_s$. If $t, u \in \Gamma_s$ with $|t| \leq |u|$, we find sequences $(f_n)_{n \in \N} \in \mathrm{C}_0(\Omega)$ with $\lim f_ns = t$ and $(g_n)_{n \in \N}$ in $\mathrm{C}_0(\Omega)_+$ with $\lim g_n s = |u| - |t|$. But then
	\begin{align*}
		\|t\| = \||t|\| = \lim_{n \rightarrow \infty} \||f_n|s\| \leq \lim_{n \rightarrow \infty} \|(|f_n| + g_n)s\| = \||u|\| = \|u\|.
	\end{align*}
By Corollary 1.4 and Theorem 2.2 of \cite{MiWo1974}, $\Gamma_s$ is a Banach lattice with positive cone $|\Gamma_s| = \overline{\mathrm{C}_0(\Omega)s}$ and $|\cdot|$ as its modulus, and, if $\K=\C$, that $\Gamma_s$ is the complexification of the real Banach lattice $\overline{\mathrm{C}_0(\Omega,\R)s}$ (cf. Section II.11 of \cite{Scha1974}). In particular, (iv) holds and this implies (ii) and (iii) by the usual formulas for vector lattices (see Corollary 1 of Proposition II.1.4 of \cite{Scha1974}). Finally, if $f \in \mathrm{C}_0(\Omega,\R)$, then $fs \geq 0$ if and only if $|f|s = fs$, i.e., $f-|f| \in I_s$. But by \cref{descriptionsupport} this is exactly the case when $f|_{\supp(s)} \geq 0$, showing (i).
\end{proof}
We use this observation to introduce different types of Banach modules.
\subsection{AM-modules}
The first is based on the concept of AM-spaces (see \cite{Scha1974}, Section II.7).
\begin{definition}
Let $\Omega$ be a locally compact space. A Banach module $\Gamma$ over $\mathrm{C}_0(\Omega)$ is an \emph{AM-module over $\mathrm{C}_0(\Omega)$} if $\Gamma_s$ is an AM-space for each $s \in \Gamma$.
\end{definition}
\begin{remark}
By \cref{modulelattice} a Banach module over $\mathrm{C}_0(\Omega)$ is an AM-module over $\mathrm{C}_0(\Omega)$ if and only if
\begin{align*}
\max(\|f_1s\|,\|f_2s\|) = \|(f_1 \vee f_2)s\|
\end{align*}
for all $f_1, f_2 \in \mathrm{C}_0(\Omega)_+$ and $s \in \Gamma$.
\end{remark}
\begin{example}
If $E$ is a topological Banach bundle over a locally compact space $\Omega$, then $\Gamma_0(E)$ (see \cref{defcontsec}) is an AM-module over $\mathrm{C}_0(\Omega)$.
\end{example}
\begin{remark}\label{amalremarks}
\begin{enumerate}[(i)]
\item AM-modules are known in the literature as \emph{locally convex Banach modules} (see Definition 7.10 in \cite{Gier1998} or Definition 1.1 of \cite{Para2008}, see also \cite{HoKe1977}) and are defined differently. By Proposition 7.14 of \cite{Gier1998} our definition is equivalent in the unital case, and using an approximate identity, even in the general setting.
Our terminology leads to a duality between AM- and AL-modules, see \cref{alvsam} below.
\item Given a compact space $K$, each AM-module over $\mathrm{C}(K)$ is isometrically isomorphic to a space of sections $\Gamma(E)$ of some Banach bundle $E$ over $K$ which is unique up to isometric isomorphy (see Theorems 2.5 and 2.6 of \cite{DuGi1983}). The same holds (and is probably well-known) in the locally compact case if $\Gamma(E)$ is replaced with $\Gamma_0(E)$. However, since we did not find a reference for this fact, we give a proof in \cref{representationbundle} below.
\end{enumerate}
\end{remark}

We now state and prove our first representation result for dynamical Banach modules.
\begin{theorem}\label{main}
Let $G$ be a locally compact group, $S\subset G$ be a closed submonoid and $(\Omega;\varphi)$ a topological $G$-dynamical system. Then the assignments
\begin{align*}
(E;\Phi) &\mapsto (\Gamma_0(E); \mathcal{T}_\Phi)\\
\Theta &\mapsto V_{\Theta}
\end{align*}
define an essentially surjective, fully faithful functor from the category of $S$-dynamical topological Banach bundles over $(\Omega;\varphi)$ to the category of $S$-dynamical AM-modules over $(\mathrm{C}_0(\Omega);T_{\varphi})$.
\end{theorem}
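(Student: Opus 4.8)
The plan is to prove the three separate assertions \textemdash{} functoriality, faithfulness, and fullness together with essential surjectivity \textemdash{} reducing everything to a single reconstruction principle at the level of morphisms. That the assignments form a functor is essentially \cref{inducedsystem1}: part (i) sends objects to objects and part (ii) sends morphisms to morphisms, while preservation of identities and composition is immediate from $V_{\Theta'\circ\Theta}=V_{\Theta'}\circ V_\Theta$. Faithfulness is equally quick: if $V_\Theta=V_{\Theta'}$, then $\Theta\circ s=\Theta'\circ s$ for every $s\in\Gamma_0(E)$, and since every $v\in E$ lies in the range of some continuous section (by \cref{topology}), we conclude $\Theta=\Theta'$.

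The technical heart is a reconstruction lemma I would isolate first: for Banach bundles $E,F$ over $\Omega$ and a homeomorphism $\psi$ of $\Omega$, every $T_\psi$-homomorphism $\mathcal{T}\in\mathscr{L}(\Gamma_0(E),\Gamma_0(F))$ has the form $\mathcal{T}=\mathcal{T}_\Phi$ for a unique Banach bundle morphism $\Phi$ over $\psi$. To construct $\Phi$ I would work fiberwise and set $\Phi|_{E_x}(s(x))\defeq(\mathcal{T}s)(\psi(x))$ for $s\in\Gamma_0(E)$. The crux is well-definedness: if $s(x)=0$, then given $\varepsilon>0$ upper semicontinuity of $\|\cdot\|$ yields a neighborhood $U$ of $x$ with $\|s\|<\varepsilon$ on $U$; choosing $f\in\mathrm{C}_0(\Omega)$ with $0\le f\le 1$, $f(x)=1$ and $\supp f\subset U$, the section $(1-f)s$ vanishes near $x$, so $x\notin\supp((1-f)s)$ and hence $\psi(x)\notin\supp(\mathcal{T}((1-f)s))$ by \cref{supportlemma}, whereas $\|\mathcal{T}(fs)\|\le\|\mathcal{T}\|\,\varepsilon$; letting $\varepsilon\to 0$ gives $(\mathcal{T}s)(\psi(x))=0$. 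Thus $\Phi|_{E_x}$ is a well-defined linear map with $\|\Phi|_{E_x}\|\le\|\mathcal{T}\|$, so $\|\Phi\|\le\|\mathcal{T}\|<\infty$; continuity of $\Phi$ on the total space follows from \cref{topology} because $\Phi\circ s=(\mathcal{T}s)\circ\psi$ is continuous for every section $s$, and $\mathcal{T}=\mathcal{T}_\Phi$ holds by construction, with uniqueness being exactly faithfulness. Having enough sections to run this argument in the locally compact rather than merely compact setting is what the non-dynamical representation recalled in \cref{amalremarks}~(ii) supplies.

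With the lemma in hand both remaining properties follow. For fullness, a homomorphism $V$ of dynamical modules is in particular a $\mathrm{C}_0(\Omega)$-module homomorphism, that is, a $T_{\mathrm{id}}$-homomorphism, so the lemma with $\psi=\mathrm{id}$ yields a bundle morphism $\Theta$ with $V=V_\Theta$; evaluating the intertwining identity $V\,\mathcal{T}_\Phi(g)=\mathcal{T}_\Psi(g)\,V$ fiberwise and using faithfulness gives $\Theta\circ\Phi_g=\Psi_g\circ\Theta$, so $\Theta$ is a morphism of dynamical bundles. For essential surjectivity I would start from an $S$-dynamical AM-module $(\Gamma;\mathcal{T})$, use \cref{amalremarks}~(ii) to identify $\Gamma$ isometrically with some $\Gamma_0(E)$, and apply the lemma to each $\mathcal{T}(g)$ (with $\psi=\varphi_g$) to obtain bundle morphisms $\Phi_g$ over $\varphi_g$ with $\mathcal{T}_{\Phi_g}=\mathcal{T}(g)$. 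Uniqueness in the lemma turns the monoid law for $\mathcal{T}$ into $\Phi_{gh}=\Phi_g\Phi_h$ and $\Phi_e=\mathrm{id}_E$, and one gets $\|\Phi_g\|=\|\mathcal{T}(g)\|$.

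It remains to verify the genuinely dynamical conditions of \cref{defdynamicalbundle}, and this is where I expect the real work to lie. Local boundedness is manageable: strong continuity of $\mathcal{T}$ makes $\{\mathcal{T}(g)s:g\in K\}$ bounded for each fixed $s$ and each compact $K\subset S$, so the uniform boundedness principle gives $\sup_{g\in K}\|\Phi_g\|=\sup_{g\in K}\|\mathcal{T}(g)\|<\infty$. The main obstacle is joint continuity of $(g,v)\mapsto\Phi_g v$ on $S\times E$: the hypothesis only provides continuity of $g\mapsto\mathcal{T}(g)s$ in the supremum norm for each section $s$, and upgrading this to continuity on the total space requires combining the neighborhood base of \cref{topology}, the local boundedness just established, and upper semicontinuity of the fiber norm into a single $\varepsilon$-estimate controlling $\Phi_h w$ simultaneously for $h$ near $g$ and $w$ near $v$. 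I would handle this exactly as in \cref{locallycomp} and the proof of \cref{inducedsystem1}, passing to the one-point compactification and the extended bundle so as to reduce to the compact-base case, where the supremum norm directly dominates fiberwise values.
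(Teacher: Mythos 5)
Your proposal is correct, and its overall architecture matches the paper's (a reconstruction lemma for single $T_\varphi$-homomorphisms, the non-dynamical representation of AM-modules, uniqueness to get the monoid law, uniform boundedness for local boundedness, and a final $\varepsilon$-estimate for joint continuity), but your proof of the central reconstruction lemma takes a genuinely different route. The paper's \cref{homismorph} does not build $\Phi$ by hand: it introduces the auxiliary bundle $F_\varphi$ of \cref{inducedbundle} and the isometric $T_{\varphi^{-1}}$-homomorphism $Vs = s\circ\varphi$, so that $V\mathcal{T}$ becomes a non-dynamical module homomorphism to which Theorem 2.6 of \cite{DuGi1983} applies directly, reducing the locally compact case to the compact one via the one-point compactification. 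Your fiberwise definition $\Phi|_{E_x}(s(x)) \defeq (\mathcal{T}s)(\varphi(x))$, with well-definedness extracted from \cref{supportlemma} and upper semicontinuity, is more self-contained: it invokes the Dupr\'e--Gillette machinery only for essential surjectivity (via \cref{representationbundle}) and makes the role of the locality theorem transparent, at the cost of verifying linearity, the fiberwise bound and continuity of $\Phi$ by hand. Conversely, for joint continuity the paper's \cref{esssurj} works directly in the locally compact setting with exactly the $3\varepsilon$-estimate whose ingredients you list, whereas you propose a detour through the compactification. Be aware that \cref{locallycomp} goes the wrong way for this purpose (it \emph{assumes} joint continuity of $\Phi$ to conclude it for $\tilde{\Phi}$); what you would actually do is extend $\mathcal{T}$ itself to $\Gamma(\tilde{E})$ via \cref{moduleextension} and \cref{extendedmodule}, as in the proof of \cref{homismorph}, prove the compact case, and restrict along the open inclusion $E \subset \tilde{E}$. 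This is sound, but buys essentially nothing: the sup-norm dominates fiber values in $\Gamma_0(E)$ just as in $\Gamma(\tilde{E})$, so the estimate is the same either way.

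Two small repairs are needed in your reconstruction argument. First, for \enquote{$(1-f)s$ vanishes near $x$} you must choose $f \equiv 1$ on a neighborhood of $x$ (Urysohn), not merely $f(x)=1$; otherwise $x$ can still lie in the closure of the set where $(1-f)s$ is nonzero, and $x \notin \supp((1-f)s)$ fails. Second, the bound $\|\Phi|_{E_x}\| \leq \|\mathcal{T}\|$ is not a formal consequence of well-definedness: it needs the same localization once more, namely choosing $f$ supported in a neighborhood where $\|s\| < \|s(x)\|+\varepsilon$, so that $\|(\mathcal{T}(fs))(\varphi(x))\| \leq \|\mathcal{T}\|\left(\|s(x)\|+\varepsilon\right)$. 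Both fixes are routine and do not affect the structure of your argument.
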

The proof of \cref{main} starts with the following simple observation.
\begin{lemma}\label{inducedbundle}
Let $\Omega$ be a locally compact space, $\varphi \colon \Omega \longrightarrow \Omega$ a homeomorphism and $(E,p_E)$ be a Banach bundle over $\Omega$. Then $(E_\phi,p_\phi)$ with $E_\phi \defeq E$ and $p_\phi \defeq \varphi^{-1} \circ p_E$ is a Banach bundle over $\Omega$ which has the following properties.
\begin{enumerate}[(i)]
\item The identical mapping $\mathrm{id}_E\colon E \longrightarrow E_\phi$ is a Banach bundle morphism over $\phi^{-1}$.
\item If $F$ is a Banach bundle over $\Omega$, then a mapping $\Phi \colon F \longrightarrow E$ is a Banach bundle morphism over $\varphi$ if and only if $\Phi \colon F \longrightarrow E_\varphi$ is a Banach bundle morphism over $\id_{\Omega}$.
\end{enumerate}
\end{lemma}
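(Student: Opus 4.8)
The plan is to verify everything by directly unwinding the definitions, exploiting the single crucial observation that $E_\varphi$ carries the \emph{same} underlying topological space as $E$ and that its fibers are merely reindexed: since $p_\varphi = \varphi^{-1}\circ p_E$, one has $(E_\varphi)_x = p_\varphi^{-1}(x) = p_E^{-1}(\varphi(x)) = E_{\varphi(x)}$ for every $x \in \Omega$, and because $\varphi$ is a bijection these fibers run over exactly the same Banach spaces as those of $E$.

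First I would check that $(E_\varphi, p_\varphi)$ is a Banach bundle. The map $p_\varphi = \varphi^{-1}\circ p_E$ is continuous, open and surjective as a composition of such maps, $\varphi^{-1}$ being a homeomorphism. For the algebraic axiom, note that the fibered product $E_\varphi \times_\Omega E_\varphi = \bigcup_x E_{\varphi(x)}\times E_{\varphi(x)}$ coincides, as a subset of $E \times E$ equipped with the subspace topology, with $E \times_\Omega E$; hence the fiberwise addition and scalar multiplication of $E_\varphi$ are literally the same maps as those of $E$ and are therefore continuous. Likewise the norm map $v \mapsto \|v\|$ is unchanged, since $\varphi(p_\varphi(v)) = p_E(v)$, so it remains upper semicontinuous. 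Only the last axiom requires a small transport: given $x$ and an open $W \ni 0_{\varphi(x)}$ in $E_\varphi = E$, I apply axiom (iv) for $E$ at the point $\varphi(x)$ to obtain $\varepsilon > 0$ and an open $U' \ni \varphi(x)$ with $\{v \in p_E^{-1}(U') \mid \|v\|\le \varepsilon\} \subset W$, and then set $U \defeq \varphi^{-1}(U')$, so that $p_\varphi^{-1}(U) = p_E^{-1}(U')$ yields the required inclusion.

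For (i), I would simply check the three defining conditions of a morphism over $\varphi^{-1}$ for $\mathrm{id}_E \colon E \to E_\varphi$. Continuity is trivial, being the identity of a fixed space; the compatibility $p_\varphi\circ \mathrm{id}_E = \varphi^{-1}\circ p_E$ is exactly the definition of $p_\varphi$; and the fiber restriction is the identity $E_x \to (E_\varphi)_{\varphi^{-1}(x)} = E_x$, which is a linear isometry, so conditions (ii) and (iii) of the morphism definition hold with uniform bound $1$.

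For (ii), I would compare the morphism conditions for $\Phi \colon F \to E$ over $\varphi$ with those for $\Phi \colon F \to E_\varphi$ over $\mathrm{id}_\Omega$ term by term. Continuity is the same statement, since $E = E_\varphi$ as spaces; the fiberwise conditions agree because $(E_\varphi)_x = E_{\varphi(x)}$, so $\Phi|_{F_x} \in \mathscr{L}(F_x, E_{\varphi(x)})$ reads identically in both formulations, and the operator norms, hence the uniform bounds, coincide. The only condition needing a word is the base compatibility: $p_\varphi \circ \Phi = p_F$ is equivalent to $\varphi^{-1}\circ p_E \circ \Phi = p_F$, which upon applying $\varphi$ becomes $p_E\circ \Phi = \varphi\circ p_F$, precisely the compatibility required of a morphism over $\varphi$. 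I do not expect any genuine obstacle here: the whole lemma is a bookkeeping exercise in the reindexing of fibers, and the only place demanding care is the transport of the local neighborhood of axiom (iv) through the homeomorphism $\varphi$.
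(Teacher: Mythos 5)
Your proposal is correct and complete: the reindexing of fibers $(E_\varphi)_x = E_{\varphi(x)}$, the transport of axiom (iv) through $\varphi$, and the term-by-term comparison of morphism conditions in (i) and (ii) all check out. The paper itself offers no proof at all --- the lemma is stated as a ``simple observation'' --- and your direct unwinding of the definitions is exactly the intended (omitted) argument.
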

Using these facts, most of the proof of \cref{main} can be reduced to the non-dynamical case. We first consider single operators.
\begin{lemma}\label{homismorph}
Let $E$ and $F$ be Banach bundles over a locally compact space $\Omega$. Moreover, let $\varphi \colon \Omega \longrightarrow \Omega$ be a homeomorphism and $\mathcal{T} \in \mathscr{L}(\Gamma_0(E),\Gamma_0(F))$ a $T_{\varphi}$-module homomorphism. Then there is a unique Banach bundle morphism $\Phi$ over $\phi$ with $\mathcal{T} = \mathcal{T}_\Phi$. Moreover, $\|\Phi\| = \|\mathcal{T}\|$ and $\mathcal{T}$ is an isometry if and only if $\Phi$ is isometric.
\end{lemma}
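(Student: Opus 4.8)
The plan is to reduce the statement to the non-dynamical case $\varphi = \mathrm{id}_\Omega$ by means of \cref{inducedbundle}, and then to construct the bundle morphism fiberwise, using the locality provided by \cref{supportlemma} to make the pointwise definition work and \cref{topology} to establish continuity. Given the $T_\varphi$-homomorphism $\mathcal{T}$, I would pass to the induced bundle $F_\varphi$ of \cref{inducedbundle}. Since $F_\varphi = F$ as a set with $p_\varphi = \varphi^{-1}\circ p_F$, the fibers satisfy $(F_\varphi)_x = F_{\varphi(x)}$, and the map $W\colon \Gamma_0(F)\to \Gamma_0(F_\varphi)$, $Wr \defeq r\circ\varphi$, is an isometric isomorphism of Banach spaces. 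A direct computation gives $W((T_\varphi f)\,r) = f\cdot Wr$ for $f\in\mathrm{C}_0(\Omega)$, so $\tilde{\mathcal{T}}\defeq W\circ\mathcal{T}\colon\Gamma_0(E)\to\Gamma_0(F_\varphi)$ satisfies $\tilde{\mathcal{T}}(fs)=f\cdot\tilde{\mathcal{T}}s$, i.e.\ $\tilde{\mathcal{T}}$ is an ordinary $\mathrm{C}_0(\Omega)$-module homomorphism. Once $\tilde{\mathcal{T}}=\mathcal{T}_\Psi$ is realized by a unique bundle morphism $\Psi\colon E\to F_\varphi$ over $\mathrm{id}_\Omega$, \cref{inducedbundle}(ii) shows that the same underlying map $\Phi\defeq\Psi$ is a bundle morphism $E\to F$ over $\varphi$, and unwinding $W$ yields $\mathcal{T}s=\Psi\circ s\circ\varphi^{-1}=\mathcal{T}_\Phi s$. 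As $(F_\varphi)_x=F_{\varphi(x)}$ the fiber operator norms of $\Psi$ and $\Phi$ coincide, and since $W$ is isometric we obtain $\|\Phi\|=\|\Psi\|=\|\tilde{\mathcal{T}}\|=\|\mathcal{T}\|$, with the isometry statements transferring verbatim.

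It then remains to treat an ordinary module homomorphism $\mathcal{T}\colon\Gamma_0(E)\to\Gamma_0(F)$. For $x\in\Omega$ and $v\in E_x$ I would choose (using \cref{topology}, which presupposes sections through every point) a section $s\in\Gamma_0(E)$ with $s(x)=v$ and set $\Phi(v)\defeq(\mathcal{T}s)(x)$. The crucial point is well-definedness: if $s(x)=0$ I claim $(\mathcal{T}s)(x)=0$. To see this I would pass to the unital module over $\mathrm{C}(K)$ of \cref{extendedmodule}, split $s=fs+(\mathbbm{1}-f)s$ with $f$ supported in a neighborhood of $x$ where $\|s\|<\varepsilon$ and $f(x)=1$; then $(\mathbbm{1}-f)s$ vanishes near $x$, so $(\mathcal{T}((\mathbbm{1}-f)s))(x)=0$ by \cref{supportlemma}, while $\|(\mathcal{T}(fs))(x)\|\le\|\mathcal{T}\|\,\|fs\|\le\|\mathcal{T}\|\,\varepsilon$, and letting $\varepsilon\to 0$ gives the claim. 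The same estimate yields $\|\Phi(v)\|\le\|\mathcal{T}\|\,\|v\|$, so each $\Phi|_{E_x}\in\mathscr{L}(E_x,F_x)$ is linear with $\|\Phi\|\le\|\mathcal{T}\|$; conversely $\|\mathcal{T}s\|=\sup_{x}\|\Phi(s(x))\|\le\|\Phi\|\,\|s\|$ gives equality of the norms. By construction $\mathcal{T}s=\Phi\circ s=\mathcal{T}_\Phi s$, and uniqueness is immediate since sections exhaust the fibers.

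The remaining and most delicate point is the continuity of the globally defined map $\Phi\colon E\to F$, which is where I expect the real work to lie. I would use the neighborhood base of \cref{topology}: given $v\in E$ and a basic neighborhood of $\Phi(v)$ determined by a section $t\in\Gamma_0(F)$ with $t(p_F(\Phi(v)))=\Phi(v)$, I would fix a section $s\in\Gamma_0(E)$ through $v$, observe that $\mathcal{T}s\in\Gamma_0(F)$ is continuous with $(\mathcal{T}s)(y)=\Phi(s(y))$, and combine the continuity of $s$ and of $\mathcal{T}s$ with the upper semicontinuity of $\|\cdot\|$ to control $\|\Phi(w)-t(p_F(\Phi(w)))\|$ for $w$ ranging over a basic neighborhood of $v$; the main subtlety is that only upper semicontinuity of the norm is available, so the estimates must be arranged to tolerate small jumps. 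Finally, $\mathcal{T}$ is isometric iff $\|\mathcal{T}s\|=\|s\|$ for all $s$, and via the norm formula $\|\mathcal{T}s\|=\sup_{x}\|\Phi(s(x))\|$ together with the fact that, by upper semicontinuity, sections can be localized to approach any prescribed fiber value while keeping the supremum norm close to it, this is equivalent to each $\Phi|_{E_x}$ being an isometry, i.e.\ to $\Phi$ being isometric.
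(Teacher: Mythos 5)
Your proof is correct, but after its first step it departs from the paper's argument. The initial reduction is identical: both you and the paper pass to the induced bundle $F_\varphi$ of \cref{inducedbundle} and conjugate with the isometric isomorphism $r \mapsto r\circ\varphi$ (your $W$, the paper's $V$) to turn the $T_\varphi$-homomorphism into an ordinary module homomorphism. The paper then settles the non-dynamical case by citation and compactification: for compact $\Omega$ it invokes Theorem 2.6 of \cite{DuGi1983} (and Propositions 10.13 and 10.16 of \cite{Gier1998} for the norm and isometry claims), and for general locally compact $\Omega$ it makes a second reduction to the compact case via the one-point compactification and the extended bundles of \cref{moduleextension} and \cref{locallycomp}. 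You instead reprove this core step from scratch, directly in the locally compact setting: define $\Phi(v) \defeq (\mathcal{T}s)(x)$ for any section $s$ through $v \in E_x$, obtain well-definedness and $\|\Phi(v)\| \le \|\mathcal{T}\|\,\|v\|$ from locality (your splitting $s = fs + (\mathbbm{1}-f)s$ works; one can even avoid \cref{extendedmodule} by writing $s - fs \in \Gamma_0(E)$, since the needed locality is just $0 = \mathcal{T}\bigl(g\,(s-fs)\bigr) = g\cdot\mathcal{T}(s-fs)$ evaluated at $x$, without appealing to the full strength of \cref{supportlemma}), and prove continuity via \cref{topology}. Your continuity sketch does close: for a basic neighborhood $V(t,U,\varepsilon)$ of $\Phi(v)$ with $x_0 \defeq p_E(v)$, the sections $\mathcal{T}s$ and $t$ agree at $x_0$, so upper semicontinuity of $y \mapsto \|(\mathcal{T}s)(y) - t(y)\|$ yields a neighborhood $U_1 \subset U$ on which this quantity is less than $\varepsilon/2$, and then $\Phi(V(s,U_1,\delta)) \subset V(t,U,\varepsilon)$ for $\delta \defeq \varepsilon/(2(\|\mathcal{T}\|+1))$, using the uniform fiber bound already established; the isometry equivalence likewise follows by localizing with functions supported where both $\|s(\cdot)\|$ and $\|(\mathcal{T}s)(\cdot)\|$ exceed their values at $x_0$ by at most $\varepsilon$. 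The trade-off: the paper's route is short because the analytic substance is outsourced to Dupr\'e--Gillette, at the price of the compactification detour and the extension/restriction bookkeeping for bundle morphisms; your route is self-contained, stays in the locally compact setting throughout, and produces the explicit fiberwise formula for $\Phi$ that the paper only obtains implicitly.
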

\begin{proof}
Assume that $\Omega =K$ is compact. Consider the bundle $F_\phi$ induced by $\varphi$, see \cref{inducedbundle}. The operator $V\in \mathscr{L}(\Gamma(E),\Gamma(F_\phi))$ defined by $Vs \defeq s \circ \varphi$ is an isometric and surjective $T_{\varphi^{-1}}$-homorphism. Therefore, $V\mathcal{T} \in \mathscr{L}(\Gamma(E),\Gamma(F_\phi))$ is a (non-dynamical) homomorphism of Banach modules. 
By Theorem 2.6 of \cite{DuGi1983} we thus find a unique bundle morphism $\Phi \colon E \longrightarrow F_\varphi$ over $\id_K$ with 
\begin{align*}
V\mathcal{T}s  = \Phi \circ s
\end{align*}
for each $s \in \Gamma(E)$, i.e., $\Phi\colon E \longrightarrow F$ is the unique bundle morphism over $\phi$ with 
\begin{align*}
\mathcal{T}s = V^{-1}(\Phi \circ s) = \Phi \circ s \circ \varphi^{-1}
\end{align*}
for every $s \in \Gamma(E)$. Moreover, $\|\Phi\| = \|V\mathcal{T}\| = \|\mathcal{T}\|$ and $\Phi$ is isometric if and only if $V\mathcal{T}$ is an isometry, i.e., if and only if $\mathcal{T}$ is isometric (see Propositions 10.13 and 10.16 of \cite{Gier1998}).\\
Now suppose that $\Omega$ is non-compact, but locally compact. Let $K$ be the one-point compactification and $\tilde{\varphi} \colon K \longrightarrow K$ the canonical continuous extension of $\varphi$. The canonical mapping
\begin{align*}
\Gamma(\tilde{E}) \longrightarrow \Gamma_0(E), \quad s \mapsto s|_\Omega
\end{align*}
is an isometric isomorphism of Banach spaces (see \cref{moduleextension}) and therefore $\mathcal{T}$ induces an operator $\tilde{\mathcal{T}} \in \mathscr{L}(\Gamma(\tilde{E}), \Gamma(\tilde{F}))$. It is easy to check that $\tilde{\mathcal{T}}$ is a $T_{\tilde{\varphi}}$-homomorphism and we can apply the first part to find a unique bundle morphism $\tilde{\Phi}\colon \tilde{E} \longrightarrow \tilde{E}$ over $\tilde{\varphi}$ with $\mathcal{T}(s|_\Omega) = (\tilde{\Phi} \circ s \circ \tilde{\varphi}^{-1})|_\Omega$ for every $s \in \Gamma(\tilde{E})$. Since each Banach bundle morphism of $E$ over $\varphi$ has a unique extension to a Banach bundle morphism of $\tilde{E}$ over $\tilde{\varphi}$ (see \cref{locallycomp}), the restriction $\tilde{\Phi}|_E$ is the unique bundle morphism $\Phi$ over $\varphi$ with $\mathcal{T}s \defeq \Phi \circ s \circ \varphi^{-1}$ for all $s \in \Gamma_0(E)$. The remaining claims are obvious.
\end{proof}
\begin{lemma}\label{esssurj}
Let $G$ be a locally compact group, $S\subset G$ be a closed submonoid and $(\Omega;\varphi)$ a topological $G$-dynamical system. Moreover, let $E$ be a Banach bundle over $\Omega$ and let $\mathcal{T}\colon S \longrightarrow \mathscr{L}(\Gamma_0(E))$ be a strongly continuous monoid homomorphism such that $(\Gamma_0(E);\mathcal{T})$ is an $S$-dynamical Banach module over $(\mathrm{C}_0(\Omega);T_{\phi})$. Then there is a unique $S$-dynamical Banach bundle $(E;\Phi)$ over $(\Omega;\varphi)$ such that $\mathcal{T}_{\Phi} = \mathcal{T}$.
\end{lemma}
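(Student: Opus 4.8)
The plan is to build the bundle dynamics fiberwise from the operators $\mathcal{T}(g)$ and then verify the three defining properties of \cref{defdynamicalbundle}. Since $g \in S \subset G$ and $\varphi$ is a $G$-action by homeomorphisms, each $\varphi_g$ is a homeomorphism, so for every $g \in S$ the operator $\mathcal{T}(g)$ is a $T_\varphi(g)$-homomorphism to which \cref{homismorph} applies. This yields, for each $g$, a \emph{unique} Banach bundle morphism $\Phi_g \colon E \longrightarrow E$ over $\varphi_g$ with $\mathcal{T}(g) = \mathcal{T}_{\Phi_g}$ and $\|\Phi_g\| = \|\mathcal{T}(g)\|$. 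By construction $\mathcal{T}_\Phi = \mathcal{T}$, and uniqueness of the whole bundle follows immediately: any two candidates $\Phi, \Phi'$ satisfy $\mathcal{T}_{\Phi_g} = \mathcal{T}(g) = \mathcal{T}_{\Phi'_g}$, whence $\Phi_g = \Phi'_g$ by the uniqueness clause of \cref{homismorph}. It remains to check that $g \mapsto \Phi_g$ is a monoid homomorphism and that $\Phi$ is locally bounded and jointly continuous.

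The monoid property is forced by uniqueness. A direct computation gives, for $s \in \Gamma_0(E)$,
\begin{align*}
\mathcal{T}_{\Phi_g}\mathcal{T}_{\Phi_h}s = \Phi_g \circ \Phi_h \circ s \circ \varphi_h^{-1}\circ\varphi_g^{-1} = (\Phi_g\circ\Phi_h)\circ s\circ\varphi_{gh}^{-1} = \mathcal{T}_{\Phi_g\circ\Phi_h}s,
\end{align*}
using $\varphi_g\circ\varphi_h = \varphi_{gh}$; since $\Phi_g\circ\Phi_h$ is a morphism over $\varphi_{gh}$ and $\mathcal{T}(gh)=\mathcal{T}(g)\mathcal{T}(h)$, uniqueness forces $\Phi_{gh}=\Phi_g\circ\Phi_h$, and likewise $\Phi_e = \mathrm{id}_E$ from $\mathcal{T}(e)=\mathrm{Id}$. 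Local boundedness is a consequence of the uniform boundedness principle: for a compact $K \subset S$ and fixed $s$, the set $\{\mathcal{T}(g)s \mid g \in K\}$ is the continuous image of $K$, hence norm-bounded, so $\sup_{g\in K}\|\Phi_g\| = \sup_{g\in K}\|\mathcal{T}(g)\| < \infty$.

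The main obstacle is joint continuity of $(g,v)\mapsto\Phi_g v$, which I would verify at a fixed point $(g_0,v_0)$ along a net $(g_\alpha,v_\alpha)\to(g_0,v_0)$. Set $x_0 = p_E(v_0)$, $x_\alpha = p_E(v_\alpha)\to x_0$, and $y_\alpha = \varphi_{g_\alpha}(x_\alpha)\to y_0 = \varphi_{g_0}(x_0)$ by continuity of the action. Choose $s \in \Gamma_0(E)$ with $s(x_0)=v_0$ (such sections exist by \cref{topology}), and split $\Phi_{g_\alpha}v_\alpha = \Phi_{g_\alpha}\bigl(s(x_\alpha)\bigr) + \Phi_{g_\alpha}\bigl(v_\alpha - s(x_\alpha)\bigr)$. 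For the correction term, continuity of the fiberwise subtraction together with upper semicontinuity of the norm gives $\|v_\alpha - s(x_\alpha)\|\to 0$; local boundedness then yields $\|\Phi_{g_\alpha}(v_\alpha - s(x_\alpha))\|\to 0$ over base points $y_\alpha\to y_0$, so this term tends to $0_{y_0}$ by axiom (iv) of the bundle definition. For the principal term I use the key identity $\Phi_{g_\alpha}\bigl(s(x_\alpha)\bigr) = \bigl(\mathcal{T}(g_\alpha)s\bigr)(y_\alpha)$. Writing $t_\alpha = \mathcal{T}(g_\alpha)s$ and $t_0 = \mathcal{T}(g_0)s$, strong continuity gives $\|t_\alpha - t_0\|_\infty\to 0$, so $(t_\alpha - t_0)(y_\alpha)\to 0_{y_0}$ again by axiom (iv), while $t_0(y_\alpha)\to t_0(y_0)=\Phi_{g_0}v_0$ by continuity of the single section $t_0$. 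Continuity of addition on $E\times_\Omega E$ lets me recombine these fiberwise-convergent pieces, yielding $\Phi_{g_\alpha}v_\alpha\to\Phi_{g_0}v_0$. This establishes condition (ii) of \cref{defdynamicalbundle} and completes the construction of the desired $S$-dynamical Banach bundle $(E;\Phi)$.
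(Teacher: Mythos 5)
Your proposal is correct and follows essentially the same route as the paper: both obtain each $\Phi_g$ from \cref{homismorph}, derive the monoid law and uniqueness of $\Phi$ from the uniqueness clause of that lemma, get local boundedness from $\|\Phi_g\| = \|\mathcal{T}(g)\|$ together with strong continuity and the uniform boundedness principle, and prove joint continuity by splitting $\Phi_h u$ into a correction term controlled by local boundedness and principal terms controlled by strong continuity of $\mathcal{T}$ and continuity of a fixed section. The only difference is presentational: you verify net convergence directly via the bundle axioms (continuity of addition, upper semicontinuity of the norm, axiom (iv)), whereas the paper runs the same three-term estimate against the neighborhood base of \cref{topology}.
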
 
\begin{proof}
We apply \cref{homismorph} to find a unique bundle morphism $\Phi_g$ over $\phi_g$ such that $\mathcal{T}(g) = \mathcal{T}_{\Phi_g}$ for each $g \in S$. Since $\mathcal{T}(1) = \mathrm{Id}_{\Gamma_0(E)}$, we obtain that $\Phi(1) = \mathrm{id}_E$. Moreover, for $g_1,g_2 \in S$ we obtain that $\tilde{\Phi}\defeq \Phi_{g_1} \circ \Phi_{g_2}$ is a bundle morphism over $\varphi_{g_1g_2}$ with 
\begin{align*}
\mathcal{T}(g_1g_2) = \mathcal{T}(g_1)\mathcal{T}(g_2) = \mathcal{T}_\Phi(g_1)\mathcal{T}_\Phi(g_2) = \mathcal{T}_{\tilde{\Phi}}.
\end{align*}
By uniqueness of $\Phi_{g_1g_2}$ we therefore obtain
\begin{align*}
\Phi_{g_1} \circ \Phi_{g_2} = \tilde{\Phi} = \Phi_{g_1g_2}.
\end{align*}
To conclude the proof we have to show that the mapping 
\begin{align*}
\Phi\colon S \longrightarrow E^E, \quad g \mapsto \Phi_g
\end{align*}
is jointly continuous and that $\Phi$ is locally bounded. The latter follows since $\|\Phi(g)\| = \|\mathcal{T}(g)\|$ for every $g \in S$ by \cref{homismorph} and $\mathcal{T}$ is locally bounded by strong continuity and the principle of uniform boundedness.\\
Now let $v \in E$ and $g \in S$. Take $s \in \Gamma_0(E)$ with $s(gp_E(v)) = \Phi_gv$, $\varepsilon > 0$ and an open neighborhood $U$ of $gp_E(v)$. Since $\Phi_g$ is continuous, we find $\tilde{s} \in \Gamma_0(E)$, $\delta > 0$ and a neighborhood $\tilde{V}$ of $p_E(v)$ such that $\tilde{s}(p_E(v)) = v$ and
\begin{align*}
\Phi_g(V(\tilde{s},\tilde{V},\delta)) \subset V(s,U,\varepsilon),
\end{align*}
see \cref{topology}. In particular, we obtain $g(\tilde{V})\subset U$ and $\|\Phi_g \tilde{s}(x) - s(gx)\| < \varepsilon$ for every $x \in \tilde{V}$. Since $\varphi$ is continuous, we find a neigborhood $V \subset \tilde{V}$ of $p_E(v)$ and a neighborhood $\tilde{W}$ of $g$ in $S$ such that $hy \in g(\tilde{V})$ for every $y \in V$ and $h \in \tilde{W}$. Finally, choose a compact neighborhood $W \subset \tilde{W}$ of $g$ with 
\begin{align*}
\sup_{x \in \Omega} \|\Phi_h\tilde{s}(x) - \Phi_g\tilde{s}(g^{-1}hx)\|  = \|\mathcal{T}(h)\tilde{s} - \mathcal{T}(g)\tilde{s}\| < \varepsilon.
\end{align*}
for every $h \in W$. Then $M \defeq \sup_{h \in W} \|\Phi_h\| < \infty$ and for $h \in W$ and $u \in V(\tilde{s}, V, \frac{\varepsilon}{M+1})$, we obtain $hp_E(u) \in U$ and
\begin{align*}
\|\Phi_h u - s(hp_E(u))\| &\leq \|\Phi_h\|\cdot \|u - \tilde{s}(p_E(u))\|\\
&\quad+ \|\Phi_h \tilde{s}(p_E(u)) - \Phi_g \tilde{s}(g^{-1}hp_E(u))\|\\
&\quad+ \|\Phi_g \tilde{s}(g^{-1}hp_E(u)) - s(hp_E(u))\|\\
&< 3\varepsilon.
\end{align*}
This shows $\Phi_h u \in V(s,U, 3\varepsilon)$ for each $h \in W$ and $u \in V(\tilde{s}, V, \frac{\varepsilon}{M+1})$ and thus $\Phi$ is jointly continuous.
\end{proof}
Finally, we look at AM-modules.
\begin{proposition}\label{representationbundle}
Let $\Omega$ be a locally compact space and $\Gamma$ an AM-module over $\mathrm{C}_0(\Omega)$. Then there is a Banach bundle $E$ over $\Omega$ such that $\Gamma_0(E)$ is isometrically isomorphic to $\Gamma$. Moreover, this bundle is unique up to isometric isomorphy.
\end{proposition}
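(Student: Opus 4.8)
The plan is to reduce everything to the already-settled compact case via the one-point compactification and the dictionary between bundles over $K$ and AM-modules over $\mathrm{C}(K)$. Let $K \defeq \Omega \cupdot \{\infty\}$ and, using \cref{extendedmodule}, regard $\Gamma$ as a unital Banach module over $\mathrm{C}(K)$. First I would record that $\Gamma$ is actually an AM-module over $\mathrm{C}(K)$. Because $\mathrm{C}_0(\Omega)$ is a commutative C*-algebra (or its self-adjoint part) and $\Gamma$ is non-degenerate, the positive approximate unit $(e_i)$ of $\mathrm{C}_0(\Omega)$ satisfies $e_i s \to s$ for every $s$; hence each $s$ lies in $\overline{\mathrm{C}_0(\Omega)_+ s}$, and the $\mathrm{C}(K)$-submodule $\overline{\mathrm{C}(K)s}$ coincides, as a Banach space, with $\Gamma_s = \overline{\mathrm{C}_0(\Omega)s}$.

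Next I would check that the two Banach-lattice structures on this common space — the one from \cref{modulelattice} for $\mathrm{C}_0(\Omega)$ and the one for $\mathrm{C}(K)$ — agree, by showing $\overline{\mathrm{C}(K)_+ s} = \overline{\mathrm{C}_0(\Omega)_+ s}$. The inclusion $\supseteq$ is immediate since $g \in \mathrm{C}_0(\Omega)_+$ acts as its extension $\bar g \in \mathrm{C}(K)_+$ with $\bar g(\infty)=0$. For $\subseteq$, given $f \in \mathrm{C}(K)_+$ I write $f\cdot s = (f - f(\infty)\mathbbm{1})|_\Omega\, s + f(\infty)s$ and approximate $s$ by $e_i s$ where $e_i$ is chosen equal to $1$ on a large compact set $L$ on which $|f - f(\infty)|$ is small; a short estimate then shows $f\cdot s$ lies within $\varepsilon$ of $\mathrm{C}_0(\Omega)_+ s$. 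Thus $\Gamma_s$ is the same Banach lattice in both structures and so is an AM-space, i.e.\ $\Gamma$ is an AM-module over $\mathrm{C}(K)$.

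Now I would apply the compact representation result \cref{amalremarks}\,(ii) to obtain a Banach bundle $F$ over $K$ together with an isometric $\mathrm{C}(K)$-module isomorphism $\Gamma \cong \Gamma(F)$. The decisive observation is that the fiber $F_\infty$ is trivial: the maximal ideal $I_\infty = \{f \in \mathrm{C}(K) \mid f(\infty)=0\}$ acts on $\Gamma$ precisely as $\mathrm{C}_0(\Omega)$ does, so $\overline{I_\infty\cdot\Gamma} = \overline{\mathrm{C}_0(\Omega)\cdot\Gamma} = \Gamma$ by non-degeneracy, which forces every section of $F$ to vanish at $\infty$ and hence $F_\infty = \{0\}$. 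Consequently $F$ is the extended bundle $\tilde{E}$ (see \cref{extendedbundle}) of its restriction $E \defeq p_F^{-1}(\Omega)$; identifying the topologies near $0_\infty$ is a routine check with the neighborhood base of \cref{topology}. By \cref{moduleextension} we then get $\Gamma(F) = \Gamma(\tilde{E}) \cong \Gamma_0(E)$ isometrically, and composing proves $\Gamma \cong \Gamma_0(E)$.

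For uniqueness, suppose $\Gamma_0(E) \cong \Gamma \cong \Gamma_0(E')$ isometrically as $\mathrm{C}_0(\Omega)$-modules for bundles $E,E'$ over $\Omega$. Passing to the extended bundles $\tilde{E},\tilde{E}'$ over $K$ and noting that any $\mathrm{C}_0(\Omega)$-linear isometry automatically commutes with the $\mathbbm{1}$-action, \cref{moduleextension} promotes the given map to an isometric $\mathrm{C}(K)$-module isomorphism $\Gamma(\tilde{E}) \cong \Gamma(\tilde{E}')$; the uniqueness clause of \cref{amalremarks}\,(ii) then yields an isometric bundle isomorphism $\tilde{E} \cong \tilde{E}'$ over $\id_K$, which restricts to the desired isomorphism $E \cong E'$ over $\id_\Omega$. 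I expect the main obstacle to be the transfer of the AM-property to $\mathrm{C}(K)$ in the second paragraph (the cone-coincidence estimate) together with the verification that a bundle over $K$ with trivial fiber at $\infty$ genuinely is the extended bundle of its restriction; by contrast, the computation that $F_\infty$ is trivial is short once non-degeneracy is invoked.
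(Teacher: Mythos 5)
Your proposal is correct and takes essentially the same route as the paper's proof: both pass to the one-point compactification $K$, transfer the AM-property to the $\mathrm{C}(K)$-module structure of \cref{extendedmodule}, invoke the compact representation theorem of Dupr\'e--Gillette, use non-degeneracy to show the fiber over $\infty$ is trivial, and identify the resulting bundle with $\tilde{E}$ via \cref{moduleextension}. The only (cosmetic) difference is the uniqueness step, where the paper cites \cref{homismorph} directly for locally compact $\Omega$, while you rerun the compact-case uniqueness through the extended bundles --- which is exactly how \cref{homismorph} itself is proved.
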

\begin{proof}
If $\Omega$ is compact, the claim holds by Theorem 2.6 of \cite{DuGi1983}. If $\Omega$ is non-compact, we consider $\Gamma$ as a Banach module over $\mathrm{C}(K)$ where $K$ is the one-point compactification of $\Omega$ (see \cref{extendedmodule}). Using a similar argument as in \cref{extendedmodule} we see that $\Gamma$  is then an AM-module over $\mathrm{C}(K)$ and we therefore find a Banach bundle $F$ over $K$ such that $\Gamma(F)$ is isometrically isomorphic to $\Gamma$ as a Banach module over $\mathrm{C}(K)$. Moreover, by the proof of Theorem 2.6 of \cite{DuGi1983} we have $F_\infty \cong \Gamma/J_\infty$ with
\begin{align*}
	J_\infty = \overline{\mathrm{lin}}\{fs\mid f\in \mathrm{C}(K) \textrm{ with } f(\infty) = 0 \textrm{ and } s \in \Gamma\}.
\end{align*}
Since $\Gamma$ is non-degenerate, we obtain $J_\infty = \Gamma$ and thus $F_\infty = \{0\}$. We can therefore define a Banach bundle $E$ over $\Omega$ by setting $E \defeq F \setminus F_\infty$ and $p_E \defeq p_F|_E$ and it is clear that $F= \tilde{E}$. In particular, we obtain an isometric isomorphism of Banach spaces (see \cref{moduleextension})
\begin{align*}
\Gamma(F) \longrightarrow \Gamma_0(E), \quad s \mapsto s|_\Omega
\end{align*}
and it is then easy to check that $\Gamma$ is isometrically isomorphic to $\Gamma_0(E)$ as a Banach module over $\mathrm{C}_0(\Omega)$. Uniqueness up to isometric isomorphy follows directly from \cref{homismorph}.
\end{proof}
Combining \cref{representationbundle} with the preceding Lemmas \ref{homismorph} and \ref{esssurj}, the proof of \cref{main} is straightforward. 
\begin{remark}
It is not hard to construct an inverse to the functor of \cref{main}. In fact, if $\Gamma$ is an AM-module over $\mathrm{C}_0(\Omega)$, then we obtain the fibers $E_x$ of a Banach bundle $E$ by setting
\begin{align*}
&J_x \defeq \overline{\mathrm{lin}}\{fs\mid f\in \mathrm{C}_0(\Omega) \textrm{ with } f(x) = 0 \textrm{ and } s \in \Gamma\},\\
&E_x \defeq \Gamma/J_x,
\end{align*}
for $x \in \Omega$, see Section 2 of \cite{DuGi1983} or Section 7 of \cite{Gier1998}. Moreover, if $\varphi \colon \Omega \longrightarrow \Omega$ is a homeomorphism and $\mathcal{T} \in \mathscr{L}(\Gamma)$ is a $T_\varphi$-homomorphism, then $\mathcal{T}J_x \subset J_{\varphi(x)}$ for every $x \in \Omega$ and therefore $\mathcal{T}$ induces a bounded operator $\Phi_x \in \mathscr{L}(E_x,E_{\varphi(x)})$.\\
With these constructions one can assign a dynamical Banach bundle to a dynamical AM-module $(\Gamma;\mathcal{T})$. We skip the details (cf. Theorem 2.6 of \cite{DuGi1983}).
\end{remark}

\subsection{AL-modules}
The dual concept of AM-spaces in the theory of Banach lattices are so-called AL-spaces (see Section II.8 of \cite{Scha1974}). Again we make use of this concept to introduce a certain class of Banach modules. 
\begin{definition}
Let $\Omega$ be a locally compact space. A Banach module $\Gamma$ over $\mathrm{C}_0(\Omega)$ is called an \emph{AL-module over $\mathrm{C}_0(\Omega)$} if $\Gamma_s$ is an AL-space for each $s \in \Gamma$.
\end{definition}
\begin{remark}
By \cref{modulelattice} a Banach module over $\mathrm{C}_0(\Omega)$ is an AL-module over $\mathrm{C}_0(\Omega)$ if and only if
\begin{align*}
\|f_1s+f_2s\| = \|f_1s\| + \|f_2s\|
\end{align*}
for all $f_1, f_2 \in \mathrm{C}_0(\Omega)_+$ and $s \in \Gamma$.
\end{remark}
Note that if $X$ is a measure space, then $\mathrm{L}^\infty(X)$ is isomorphic to $\mathrm{C}(K)$ as a Banach algebra and a Banach lattice for some compact space $K$. Thus, every Banach module over $\mathrm{L}^\infty(X)$ can be seen as a Banach module over $\mathrm{C}(K)$. In particular, we may speak of \emph{AM- and AL-modules over $\mathrm{L}^\infty(X)$}.
\begin{example}
Let $E$ be a measurable Banach bundle over a measure space $X$. Then $\Gamma^1(E)$ (see \cref{measurablesection}) is an AL-module over $\mathrm{L}^\infty(X)$.
\end{example}
\begin{remark}
It is tempting to expect that for a measure space $X$ every AL-module over $\mathrm{L}^\infty(X)$ is already isomorphic to a space $\Gamma^1(E)$ for some measurable Banach bundle $E$ over $X$. However, we will see below that this is not the case (see \cref{counterexampleAL}).
\end{remark}
As in the case of Banach lattices, AM- and AL-modules are dual to each other. To formulate this result we first equip the dual space of a Banach module with a module structure.
\begin{definition}
Let $K$ be a compact space and $\Gamma$ a Banach module over $\mathrm{C}(K)$. Then the dual space $\Gamma'$ equipped with the operation $(f \cdot s')(s)\defeq s'(f\cdot s)$ for $s \in \Gamma$, $s' \in \Gamma'$ and $f \in \mathrm{C}(K)$ is the \emph{dual Banach module of $\Gamma$ over $\mathrm{C}(K)$.}
\end{definition}
It is straightforward to check that the dual Banach module of a Banach module is in fact a Banach module. We can now make the duality between AM- and AL-modules precise using the following result due to Cunnigham (see Theorem 5 of \cite{Cunn1967}) though in somewhat different notation. 
\begin{proposition}\label{alvsam}
Let $K$ be a compact space. For a Banach module $\Gamma$ over $\mathrm{C}(K)$ the following assertions hold.
\begin{enumerate}[(i)]
\item $\Gamma$ is an AM-module if and only if $\Gamma'$ is an AL-module.
\item $\Gamma$ is an AL-module if and only if $\Gamma'$ is an AM-module.
\end{enumerate}
\end{proposition}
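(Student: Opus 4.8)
The plan is to argue entirely through the two norm characterizations recorded in the remarks after the definitions of AM- and AL-modules: a module $\Lambda$ over $\mathrm{C}(K)$ is an AM-module exactly when $\|(f_1\vee f_2)t\|=\max(\|f_1t\|,\|f_2t\|)$ for all $f_1,f_2\in\mathrm{C}(K)_+$ and $t\in\Lambda$, and an AL-module exactly when $\|f_1t+f_2t\|=\|f_1t\|+\|f_2t\|$ under the same quantifiers. I would first reduce the four implications to the two forward ones, (A) $\Gamma$ AM $\Rightarrow\Gamma'$ AL and (B) $\Gamma$ AL $\Rightarrow\Gamma'$ AM. The canonical embedding $J\colon\Gamma\to\Gamma''$ is an isometric $\mathrm{C}(K)$-module homomorphism, and both properties are conditions on the cyclic submodules $\overline{\mathrm{C}(K)t}$, whose Banach lattice structure (\cref{modulelattice}) is preserved by isometric module homomorphisms; hence each property passes to closed submodules, because for a closed submodule the cyclic part $\overline{\mathrm{C}(K)t}$ coincides with the ambient one. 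Thus, if $\Gamma'$ is AL, applying (B) to $\Gamma'$ shows $\Gamma''$ is AM and therefore so is the closed submodule $J(\Gamma)\cong\Gamma$, giving the converse of (i); the converse of (ii) follows symmetrically by applying (A) to $\Gamma'$.

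For (B), fix $s'\in\Gamma'$, $f_1,f_2\in\mathrm{C}(K)_+$ and set $g\defeq f_1\vee f_2$. Since $f_i\le g$, monotonicity of the norm on the Banach lattice $\Gamma'_{s'}$ (\cref{modulelattice}) yields $\max(\|f_1s'\|,\|f_2s'\|)\le\|gs'\|$, so only the reverse inequality is at issue. Given $\|s\|\le1$ and $\varepsilon>0$, I would choose a continuous $\theta\colon K\to[0,1]$ with $\theta=1$ on $\{f_1-f_2\ge\varepsilon\}$ and $\theta=0$ on $\{f_2-f_1\ge\varepsilon\}$ (for instance $\theta=\eta\circ(f_1-f_2)$ for a suitable ramp $\eta$), so that $\|\theta(g-f_1)\|_\infty\le\varepsilon$ and $\|(1-\theta)(g-f_2)\|_\infty\le\varepsilon$. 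Writing $gs=g\theta s+g(1-\theta)s$ and replacing $g\theta$ by $f_1\theta$ and $g(1-\theta)$ by $f_2(1-\theta)$ up to errors of norm $\le\varepsilon\|s'\|$, the bound $|s'(f_1\theta s)|\le\|f_1s'\|\,\|\theta s\|$ together with the defining AL-identity $\|\theta s\|+\|(1-\theta)s\|=\|s\|$ gives $|s'(gs)|\le\max(\|f_1s'\|,\|f_2s'\|)+2\varepsilon\|s'\|$. Letting $\varepsilon\to0$ and taking the supremum over $s$ proves $\|gs'\|\le\max(\|f_1s'\|,\|f_2s'\|)$.

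For (A), only $\|(f_1+f_2)s'\|\ge\|f_1s'\|+\|f_2s'\|$ needs proof, the reverse being the triangle inequality. Here I would pick near-maximizers $s_1,s_2$ in the unit ball of $\Gamma$ with $s'(f_is_i)\ge\|f_is'\|-\varepsilon$ (real after a unimodular rotation) and form, for small $\delta>0$, the element $s\defeq\theta_\delta s_1+(1-\theta_\delta)s_2$ with $\theta_\delta\defeq f_1/(f_1+f_2+\delta\mathbbm{1})\in\mathrm{C}(K)$ (continuous, since $f_1+f_2+\delta\mathbbm{1}$ is invertible). A direct estimate gives $\|(f_1+f_2)\theta_\delta-f_1\|_\infty\le\delta$ and $\|(f_1+f_2)(1-\theta_\delta)-f_2\|_\infty\le\delta$, whence $s'((f_1+f_2)s)\ge\|f_1s'\|+\|f_2s'\|-2\varepsilon-2\delta\|s'\|$. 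The structural input is that the closed unit ball of an AM-module is $\mathrm{C}(K)$-convex: since AM-modules are exactly the locally convex modules (\cref{amalremarks}(i)), whose unit balls are by definition closed under such convex combinations with continuous coefficients $0\le\theta_\delta\le1$, we obtain $\|s\|\le1$. Letting $\varepsilon,\delta\to0$ yields the claim.

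The main obstacle is isolating the two structural facts on which these pasting arguments rest — the $\mathrm{C}(K)$-convexity of the unit ball in the AM case and the additivity $\|\theta s\|+\|(1-\theta)s\|=\|s\|$ in the AL case — and then carefully controlling the overlap of $f_1$ and $f_2$ through the continuous cut-off $\theta$ and the uniform error estimates; once these are in place the remaining computations are routine. This recovers, in the present module-theoretic language and via \cref{modulelattice}, Cunningham's duality between $L$- and $M$-structure.
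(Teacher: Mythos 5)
Your proof is correct, but it follows a genuinely different route from the paper: the paper offers no argument at all for \cref{alvsam}, simply attributing the statement to Theorem 5 of \cite{Cunn1967} (Cunningham's duality between $L$- and $M$-structure), noting only that the notation there differs. You instead give an essentially self-contained proof within the paper's own framework. Your reduction of the four implications to the two forward ones is sound: the canonical embedding $J\colon\Gamma\to\Gamma''$ is an isometric module homomorphism onto a closed submodule, and since the AM/AL properties are norm identities quantified over single elements (via \cref{modulelattice} and the remarks following the definitions), they pass to closed submodules and along isometric module isomorphisms. Your AL $\Rightarrow$ AM direction checks out: $\theta,1-\theta\in\mathrm{C}(K)_+$ gives $\|\theta s\|+\|(1-\theta)s\|=\|s\|$ (non-degeneracy over the unital algebra $\mathrm{C}(K)$, hence unitality, is what justifies $\theta s+(1-\theta)s=s$), and the cut-off estimates $\|\theta(g-f_1)\|_\infty\le\varepsilon$ and $\|(1-\theta)(g-f_2)\|_\infty\le\varepsilon$ hold because $g-f_1=(f_2-f_1)_+$ and $g-f_2=(f_1-f_2)_+$. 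The AM $\Rightarrow$ AL direction is likewise correct, with the one external ingredient being the $\mathrm{C}(K)$-convexity of the unit ball of an AM-module, i.e.\ the equivalence with locally convex modules from Proposition 7.14 of \cite{Gier1998}; since the paper records exactly this equivalence in \cref{amalremarks}, this is a legitimate tool here. The trade-off between the two approaches: the paper's pure citation is shorter but leaves the reader to translate Cunningham's $L/M$-structure language into the module-theoretic setting, whereas your argument replaces that wholesale citation by a single narrower one plus explicit estimates, making visible that the duality rests only on \cref{modulelattice}, $\mathrm{C}(K)$-convexity, Hahn--Banach (for the isometry of $J$ and the choice of near-maximizers), and elementary Urysohn-type pasting.
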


\section{Lattice normed modules}
\subsection{\texorpdfstring{$\mathrm{U}_{0}(\Omega)$-} nnormed modules}
As observed in \cite{Cunn1967}, AM-modules admit an additional lattice theoretic structure. For a locally compact space $\Omega$, we write
\begin{align*}
\mathrm{U}(\Omega)&\defeq \{f \colon \Omega \longrightarrow \R\mid f \textrm{ upper semicontinuous}\},\\
\mathrm{U}_{0}(\Omega) &\defeq \{f \in \mathrm{U}(\Omega)\mid \forall \varepsilon >0 \exists K \subset \Omega \textrm{ compact with } |f(x)| \leq \varepsilon \forall x \notin K\},\\
\mathrm{U}_{0}(\Omega)_+ &\defeq \{f \in \mathrm{U}_0(\Omega)\mid f \geq 0\},
\end{align*}
and introduce the following concept (see Section 6.6 of \cite{HoKe1977} for the compact case).
%In this case the mapping
%\begin{align*}
%\Gamma_0(E) \longrightarrow \mathrm{C}_0(\Omega), \quad s \mapsto |s| = \|s(\cdot)\|
%\end{align*}
%turns $\Gamma_0(E)$ into a so-called \emph{lattice normed space} over $\mathrm{C}_0(\Omega)$ (see Section 2 of \cite{Kusr2000} for this concept). This \enquote{vector valued norm} is compatible with the module structure in the following way.
\begin{definition}
Let $\Omega$ be a locally compact space and $\Gamma$ a Banach module over $\mathrm{C}_0(\Omega)$. A mapping 
\begin{align*}
| \cdot | \colon \Gamma \longrightarrow \mathrm{U}_{0}(\Omega)_+
\end{align*}
is a \emph{$\mathrm{U}_{0}(\Omega)$-valued norm} if
\begin{enumerate}[(i)]
\item $\||s|\| = \|s\|$,
\item $|fs| = |f| \cdot |s|$,
\item $|s_1 + s_2| \leq |s_1| + |s_2|$,  
\end{enumerate}
for all $s,s_1,s_2 \in \Gamma$ and $f \in \mathrm{C}_0(\Omega)$. A Banach module over $\mathrm{C}_0(\Omega)$ together with a $\mathrm{U}_{0}(\Omega)$-valued norm is called a \emph{$\mathrm{U}_{0}(\Omega)$-normed module}.
\end{definition}
\begin{example}
Let $E$ be a Banach bundle over a locally compact space $\Omega$. Setting $|s|(x) \defeq \|s(x)\|$ for $x \in \Omega$ and $s\in \Gamma_0(E)$ turns $\Gamma_0(E)$ into a $\mathrm{U}_{0}(\Omega)$-normed module.
\end{example}
Note that each $\mathrm{U}_{0}(\Omega)$-normed module is automatically an AM-module over $\mathrm{C}_0(\Omega)$. The converse also holds and is basically due to Cunningham in the compact case (see Lemma 3 and Theorem 2 in \cite{Cunn1967}).
	\begin{proposition}\label{vectorvaluednorm1}
	Let $\Omega$ be a locally compact space. For a Banach module $\Gamma$ over $\mathrm{C}_0(\Omega)$ the following are equivalent. 
		\begin{enumerate}[(a)]
		\item $\Gamma$ is an AM-module over $A$.
		\item $\Gamma$ admits a $\mathrm{U}_{0}(\Omega)$-valued norm.
		\end{enumerate} 
	If this assertions hold, then the $\mathrm{U}_{0}(\Omega)$-valued norm is unique and given by
		\begin{align*}
			|s|(x) = \inf \{\|fs\| \mid f \in \mathrm{C}_0(\Omega)_+ \textrm{ with } f(x)=1\} 
		\end{align*}
		for $x \in \Omega$ and $s\in \Gamma$.
	\end{proposition}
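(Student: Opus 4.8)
The plan is to prove the two implications separately and then settle uniqueness and the explicit formula in one stroke; throughout I regard $\mathrm{U}_0(\Omega)$ as equipped with its supremum norm, so that property (i) reads $\sup_{x}|s|(x)=\|s\|$.

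\emph{The easy implication (b)$\Rightarrow$(a).} Assuming $\Gamma$ carries a $\mathrm{U}_0(\Omega)$-valued norm $|\cdot|$, I would verify the AM-identity $\max(\|f_1s\|,\|f_2s\|)=\|(f_1\vee f_2)s\|$ for $f_1,f_2\in\mathrm{C}_0(\Omega)_+$ and $s\in\Gamma$ from the remark following the definition of AM-modules. Since $f_1\vee f_2\geq 0$, property (ii) gives $|(f_1\vee f_2)s|=(f_1\vee f_2)|s|$, and because $|s|\geq 0$ this equals the pointwise maximum $(f_1|s|)\vee(f_2|s|)$. As $\sup_x\max(g_1(x),g_2(x))=\max(\sup_xg_1,\sup_xg_2)$ for nonnegative functions, property (i) turns this into $\|(f_1\vee f_2)s\|=\max(\||f_1s|\|,\||f_2s|\|)=\max(\|f_1s\|,\|f_2s\|)$. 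This needs neither compactness nor the formula.

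\emph{Existence in (a)$\Rightarrow$(b).} Here the idea is to reduce to a compact base and invoke Cunningham. By \cref{extendedmodule} I view $\Gamma$ as a (unitary) module over $\mathrm{C}(K)$ with $K=\Omega\cupdot\{\infty\}$, which is again an AM-module exactly as in the proof of \cref{representationbundle}. By Lemma 3 and Theorem 2 of \cite{Cunn1967} there is then a $\mathrm{U}(K)$-valued norm $|\cdot|_K\colon\Gamma\to\mathrm{U}(K)_+$ satisfying (i)--(iii) over $\mathrm{C}(K)$ and given by $|s|_K(x)=\inf\{\|fs\|\mid f\in\mathrm{C}(K)_+,\,f(x)=1\}$. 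Non-degeneracy of $\Gamma$ over $\mathrm{C}_0(\Omega)$ forces $|s|_K(\infty)=0$: for $\varepsilon>0$ choose $e\in\mathrm{C}_0(\Omega)_+$ with $\|s-es\|<\varepsilon$; then $\mathbbm{1}-e\in\mathrm{C}(K)_+$ takes the value $1$ at $\infty$ and $\|(\mathbbm{1}-e)s\|=\|s-es\|<\varepsilon$. Setting $|s|\defeq|s|_K|_\Omega$, the restriction is nonnegative and upper semicontinuous, and it vanishes at infinity since $\{|s|_K\geq\varepsilon\}$ is closed in $K$, avoids $\infty$, and is therefore a compact subset of $\Omega$. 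Properties (i)--(iii) over $\mathrm{C}_0(\Omega)$ descend by restriction, the equality $\sup_{x\in\Omega}|s|(x)=\sup_{x\in K}|s|_K(x)=\|s\|$ in (i) being valid precisely because $|s|_K(\infty)=0$.

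\emph{Uniqueness and the formula.} This I would carry out self-containedly for an \emph{arbitrary} $\mathrm{U}_0(\Omega)$-valued norm $|\cdot|$, which simultaneously pins the norm constructed above to the displayed expression. Fix $x\in\Omega$ and $s\in\Gamma$. For $f\in\mathrm{C}_0(\Omega)_+$ with $f(x)=1$, (ii) gives $|fs|=f|s|$, so by (i) one has $\|fs\|=\sup_y f(y)|s|(y)\geq f(x)|s|(x)=|s|(x)$, and the infimum over such $f$ dominates $|s|(x)$. Conversely, given $\varepsilon>0$, upper semicontinuity makes $U\defeq\{y\mid |s|(y)<|s|(x)+\varepsilon\}$ an open neighborhood of $x$; by local compactness and Urysohn's lemma pick $f\in\mathrm{C}_0(\Omega)_+$ with $0\leq f\leq 1$, $f(x)=1$ and $\supp f\subset U$. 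Then $f(y)|s|(y)\leq|s|(x)+\varepsilon$ for all $y$ (using $f(y)\le1$ on $U$ and $f(y)=0$ off $\supp f$), so $\|fs\|=\sup_y f(y)|s|(y)\leq|s|(x)+\varepsilon$; letting $\varepsilon\downarrow 0$ yields the infimum formula. In particular the norm of the previous paragraph equals the displayed one, and any two such norms coincide.

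\emph{The main obstacle.} All the genuine content sits in the existence step, and concretely in the lower bound $\sup_x|s|(x)\geq\|s\|$ of property (i): recovering the full module norm from the fiberwise norms is exactly Cunningham's compact theorem and the only place the AM-hypothesis is truly used, the upper bound $|s|(x)\leq\|s\|$ being immediate from $0\leq f\leq 1$. The reduction to the compact case is otherwise routine but must be handled carefully at the point at infinity---verifying $|s|_K(\infty)=0$ and that the infima over $\mathrm{C}_0(\Omega)_+$ and over $\mathrm{C}(K)_+$ agree on $\Omega$ (which follows from the uniqueness argument, since multiplying a competitor by a compactly supported cutoff equal to $1$ at $x$ never increases $\|fs\|$).
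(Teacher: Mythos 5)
Your proof is correct and follows essentially the same route as the paper: existence is reduced to Cunningham's compact-case result (Lemma 3 and Theorem 2 of \cite{Cunn1967}) via the extended module over $\mathrm{C}(K)$ from \cref{extendedmodule}, and uniqueness together with the displayed formula is obtained from upper semicontinuity and a Urysohn cutoff, exactly as in the paper. The only additions are details the paper leaves implicit or handles elsewhere---the explicit verification of (b)$\Rightarrow$(a) (covered by the remark preceding the proposition) and the careful check that $|s|_K(\infty)=0$ via non-degeneracy---both of which are sound.
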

	\begin{proof}
		Using \cref{extendedmodule} and an approximate unit, existence via the desired formula of the $\mathrm{U}_0(\Omega)$-valued norm can be reduced to the compact case which is treated in Lemma 3 and Theorem 2 of \cite{Cunn1967}.\\
		For uniqueness, observe that any $\mathrm{U}_{0}(\Omega)$-valued norm $| \cdot | \colon \Gamma \longrightarrow \mathrm{U}_{0}(\Omega)_+$ satisfies 
		\begin{align*}
			|s|(x) \leq \inf \{\|fs\| \mid f \in \mathrm{C}_0(\Omega)_+ \textrm{ with } f(x)=1\} 
		\end{align*}
		for every $x \in \Omega$ and $s \in \Gamma$. On the other hand, if $x \in \Omega$, $s \in \Gamma$ and $\varepsilon>0$, we find a neighborhood $U$ of $x$ such that $|s|(y) \leq |s|(x) + \varepsilon$ for every $y \in U$ since $|s|$ is upper semicontinuous. Thus there is $f \in \mathrm{C}_0(\Omega)_+$ with $\|f\| = f(x)=1$ and 
		\begin{align*}
			\|fs\| = \sup_{y \in \Omega} |fs|(y) = \sup_{y \in \Omega} |f(y)| \cdot |s|(y) \leq |s|(x) + \varepsilon
		\end{align*}		 
		which implies the claim.
	\end{proof}
	\begin{remark}
	The representing Banach bundles of AM-modules $\Gamma$ over $\mathrm{C}_0(\Omega)$ satisfying $|s| \in \mathrm{C}_0(\Omega) \subset \mathrm{U}_0(\Omega)$ for every $s \in \Gamma$ are precisely the continuous Banach bundles (see Theorem 15.11 of \cite{Gier1998} or pages 47--48 of \cite{DuGi1983} for the compact case; the locally compact case can easily be reduced to this). 
	\end{remark}
We can now state the main theorem of this subsection which shows that the algebraic and lattice theoretic structures of $\mathrm{U}_0(\Omega)$-normed modules are closely related to each other. Here, we use the notation $T_\varphi$ for the map $\mathrm{U}_0(\Omega) \longrightarrow \mathrm{U}_0(\Omega), \, f \mapsto f \circ \varphi^{-1}$.
\begin{theorem}\label{latticevsmod}
Let $\Omega$ be a locally compact space, $\varphi \colon \Omega \longrightarrow \Omega$ a homeomorphism and $\Gamma$ and $\Lambda$ $\mathrm{U}_0(\Omega)$-normed modules. For $\mathcal{T} \in \mathscr{L}(\Gamma,\Lambda)$ the following are equivalent.
\begin{enumerate}[(a)]
\item $\mathcal{T}(fs) = T_\varphi f \cdot \mathcal{T}s$ for every $f \in \mathrm{C}_0(\Omega)$ and $s \in \Gamma$.
\item $\supp(\mathcal{T}s) \subset \varphi(\supp(s))$ for every $s \in \Gamma$.
\item $|\mathcal{T}s|\leq \|\mathcal{T}\| \cdot T_\varphi|s|$ for every $s \in \Gamma$.
\item There is $m>0$ such that $|\mathcal{T}s|\leq m \cdot T_\varphi|s|$ for every $s \in \Gamma$.
\end{enumerate}
Moreover, if $\Gamma = \Gamma_0(E)$ and $\Lambda = \Gamma_0(F)$ for Banach bundles $E$ and $F$ over $\Omega$, then the properties above are also equivalent to the following assertion.
\begin{enumerate}[(e)]
\setcounter{enumi}{3}
\item There is a morphism $\Phi$ over $\varphi$ with $\mathcal{T} = \mathcal{T}_\Phi$.
\end{enumerate}
If (e) holds, then the morphism $\Phi$ in (e) is unique, $\|\Phi\| = \|\mathcal{T}\|$ and $\Phi$ is isometric if and only if $\mathcal{T}$ is isometric.
\end{theorem}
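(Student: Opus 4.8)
The plan is to establish the chain of equivalences (a) $\Leftrightarrow$ (b) $\Leftrightarrow$ (c) $\Leftrightarrow$ (d) first, and then fold in (e) under the additional hypothesis that both modules are section modules of Banach bundles. The equivalence of (a) and (b) is already available: it is precisely \cref{supportlemma} applied to the present situation, so I would invoke it directly. The remaining work is to tie the lattice-theoretic conditions (c) and (d) to these. Since (c) trivially implies (d) with $m = \|\mathcal{T}\|$, the substantive implications are (b) $\Rightarrow$ (c) and (d) $\Rightarrow$ (b) (or (d) $\Rightarrow$ (a)), which will close the loop.

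For (b) $\Rightarrow$ (c) I would use the variational formula for the $\mathrm{U}_0(\Omega)$-valued norm supplied by \cref{vectorvaluednorm1}, namely $|u|(y) = \inf\{\|fu\| : f \in \mathrm{C}_0(\Omega)_+,\ f(y)=1\}$. Fix $s \in \Gamma$ and a point $y \in \Omega$; I want to bound $|\mathcal{T}s|(y)$ by $\|\mathcal{T}\|\cdot |s|(\varphi^{-1}(y))$. The idea is to pick $f \in \mathrm{C}_0(\Omega)_+$ with $f(\varphi^{-1}(y)) = 1$ nearly attaining the infimum defining $|s|(\varphi^{-1}(y))$, and then transport it via the Koopman operator: set $\tilde f \defeq T_\varphi f = f \circ \varphi^{-1}$, so that $\tilde f(y) = 1$ and $\|\tilde f\| = \|f\|$. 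Applying (a) (equivalently (b), via \cref{supportlemma}) gives $\mathcal{T}(fs) = \tilde f \cdot \mathcal{T}s$, and hence using axiom (i) of the $\mathrm{U}_0$-valued norm, $\|\tilde f\cdot \mathcal{T}s\| = \|\mathcal{T}(fs)\| \leq \|\mathcal{T}\|\,\|fs\|$. Combined with property (ii) of the valued norm ($|\tilde f \cdot \mathcal{T}s| = |\tilde f|\cdot|\mathcal{T}s|$) and the fact that $\tilde f(y)=1$, this controls $|\mathcal{T}s|(y)$ by $\|\mathcal{T}\|\,\|fs\|$; taking the infimum over admissible $f$ and using the variational formula yields $|\mathcal{T}s|(y) \leq \|\mathcal{T}\|\cdot|s|(\varphi^{-1}(y)) = \|\mathcal{T}\|\cdot(T_\varphi|s|)(y)$, which is exactly (c).

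For the converse direction (d) $\Rightarrow$ (b) I would argue through supports. If $x \notin \supp(s)$ then $|s|(x) = 0$; indeed by \cref{descriptionsupport} there is $f \in \mathrm{C}_0(\Omega)_+$ with $f(x)=1$ and $fs = 0$, forcing the variational infimum to vanish. The estimate $|\mathcal{T}s| \leq m\cdot T_\varphi|s|$ then gives $|\mathcal{T}s|(\varphi(x)) \leq m\cdot|s|(x) = 0$, so $\varphi(x) \notin \supp(\mathcal{T}s)$. Since $\varphi$ is a homeomorphism this shows $\supp(\mathcal{T}s) \subset \varphi(\supp(s))$, i.e.\ (b). The step I expect to be mildly delicate is the clean passage between the pointwise vanishing of the valued norm and membership in the support; this is where \cref{descriptionsupport} and \cref{vectorvaluednorm1} must be combined carefully, but it should be routine.

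Finally, for the equivalence with (e) in the bundle case: assertion (a) says precisely that $\mathcal{T}$ is a $T_\varphi$-homomorphism of the section modules, so the equivalence (a) $\Leftrightarrow$ (e) together with the uniqueness of $\Phi$, the norm identity $\|\Phi\| = \|\mathcal{T}\|$, and the isometry statement are all immediate consequences of \cref{homismorph}. Thus the last paragraph of the theorem requires no new argument beyond citing \cref{homismorph}. The main obstacle in the whole proof is the (b) $\Rightarrow$ (c) implication, where the sharp constant $\|\mathcal{T}\|$ must be recovered; everything hinges on choosing the transported test function $\tilde f$ correctly and exploiting the multiplicativity (property (ii)) of the $\mathrm{U}_0(\Omega)$-valued norm to keep the constant exactly $\|\mathcal{T}\|$ rather than a larger value.
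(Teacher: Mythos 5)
Your proof is correct in substance and follows the paper's overall architecture---(a) $\Leftrightarrow$ (b) via \cref{supportlemma}, (c) $\Rightarrow$ (d) trivially, and assertion (e) together with uniqueness, $\|\Phi\| = \|\mathcal{T}\|$ and the isometry statement via \cref{homismorph}---but your treatment of the key implication (b) $\Rightarrow$ (c) is organized differently. The paper argues by contradiction: assuming $\|\mathcal{T}\|\cdot|s|(x) < |\mathcal{T}s|(\varphi(x))$ at some point $x$, it uses upper semicontinuity of $|s|$ to choose a bump function $f$ supported near $x$ and derives $\|\mathcal{T}\|\,\|fs\| + \varepsilon \leq \|\mathcal{T}(fs)\|$, contradicting the definition of the operator norm. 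You instead give a direct pointwise estimate, transporting test functions through $T_\varphi$ and invoking the variational formula $|s|(x) = \inf\{\|fs\| \mid f \in \mathrm{C}_0(\Omega)_+ \textrm{ with } f(x)=1\}$ from \cref{vectorvaluednorm1}. The two arguments rest on the same analytic mechanism (upper semicontinuity plus a bump function), but your version outsources that mechanism to the already-proven uniqueness part of \cref{vectorvaluednorm1}, so the proof of the theorem itself becomes softer and avoids the contradiction; the paper's version repeats the mechanism inline. Both recover the sharp constant $\|\mathcal{T}\|$, and your chain of estimates $|\mathcal{T}s|(y) \leq \|T_\varphi f \cdot \mathcal{T}s\| = \|\mathcal{T}(fs)\| \leq \|\mathcal{T}\|\,\|fs\|$ is sound.

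One step needs a (routine) repair, which you yourself flagged as the delicate point: in (d) $\Rightarrow$ (b) you conclude from $|\mathcal{T}s|(\varphi(x)) = 0$ that $\varphi(x) \notin \supp(\mathcal{T}s)$. Pointwise this implication is false: by \cref{suppdesc} the support equals the \emph{closure} of $\{y \in \Omega \mid |\mathcal{T}s|(y) \neq 0\}$, so vanishing of the valued norm at a single point does not exclude that point from the support. The correct conclusion of your computation is that $|\mathcal{T}s|$ vanishes on all of $\varphi(\Omega \setminus \supp(s))$, which is open since $\varphi$ is a homeomorphism; hence $\{|\mathcal{T}s| \neq 0\} \subset \varphi(\supp(s))$, and since $\varphi(\supp(s))$ is closed, \cref{suppdesc} yields $\supp(\mathcal{T}s) \subset \varphi(\supp(s))$. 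This closure argument is exactly the role \cref{suppdesc} plays in the paper's one-line disposal of this implication, so the gap is one of formulation rather than of idea.
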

For the proof we need the following lemma connecting the vector-valued norm with the concept of support introduced in \cref{defsupport}.
\begin{lemma}\label{suppdesc}
Let $\Gamma$ be a $\mathrm{U}_0(\Omega)$-normed module. Then 
\begin{align*}
\supp(s) = \supp(|s|) = \overline{\{x \in \Omega\mid |s|(x) \neq 0\}}
\end{align*}
for each $s \in \Gamma$.
\end{lemma}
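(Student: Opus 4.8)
The plan is to prove the two displayed equalities separately, with essentially all the work going into identifying the module support $\supp(s)$ with the topological support $\overline{\{|s|\neq 0\}}$ of the $\mathrm{U}_0(\Omega)$-valued norm. The middle equality $\supp(|s|)=\overline{\{x\in\Omega\mid |s|(x)\neq 0\}}$ is then just the statement that the support of the nonnegative function $|s|\in\mathrm{U}_0(\Omega)$ is by definition the closure of its nonvanishing set; if one instead reads $\supp(|s|)$ as the module support of $|s|$ inside the $\mathrm{C}_0(\Omega)$-module $\mathrm{U}_0(\Omega)$, the very same two inclusions below (applied with $\Gamma=\mathrm{U}_0(\Omega)$ and $|t|=t$ for $t\geq 0$) give the identification. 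Throughout I would use that the norm on $\mathrm{U}_0(\Omega)$ is the supremum norm, so that axiom (i) of the $\mathrm{U}_0(\Omega)$-valued norm reads $\|s\|=\sup_{x\in\Omega}|s|(x)$ for every $s\in\Gamma$.

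For the inclusion $\overline{\{|s|\neq 0\}}\subseteq \supp(s)$, I would first show $\{|s|\neq 0\}\subseteq\supp(s)$ and then take closures, using that $\supp(s)$ is closed. Fix $x$ with $|s|(x)\neq 0$ and let $f\in\mathrm{C}_0(\Omega)$ with $f(x)\neq 0$. By the multiplicativity axiom (ii), $|fs|=|f|\cdot|s|$, hence
\begin{align*}
\|fs\|=\sup_{y\in\Omega}|fs|(y)\geq |fs|(x)=|f(x)|\cdot|s|(x)>0,
\end{align*}
so $fs\neq 0$. Since $f$ was an arbitrary function with $f(x)\neq 0$, this gives $x\in\supp(s)$ by \cref{defsupport}.

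For the reverse inclusion $\supp(s)\subseteq\overline{\{|s|\neq 0\}}$, I would argue by contraposition. If $x\notin\overline{\{|s|\neq 0\}}$, then $U\defeq \Omega\setminus\overline{\{|s|\neq 0\}}$ is an open neighborhood of $x$ on which $|s|$ vanishes identically. As $\Omega$ is locally compact Hausdorff, Urysohn's lemma provides $f\in\mathrm{C}_0(\Omega)_+$ with $f(x)=1$ and compact support contained in $U$. Then $|fs|=f\cdot|s|$ vanishes everywhere --- on $U$ because $|s|=0$ there, and off $U$ because $f=0$ there --- so $\|fs\|=\||fs|\|=0$ and hence $fs=0$. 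Thus $f$ witnesses $x\notin\supp(s)$, completing the inclusion and therefore the identity $\supp(s)=\overline{\{|s|\neq 0\}}$.

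The argument is elementary once the multiplicativity axiom $|fs|=|f|\cdot|s|$ and the supremum-norm identity $\|fs\|=\sup_y|fs|(y)$ are available, so I do not anticipate a genuine obstacle beyond bookkeeping. The only point deserving a word of care is the interpretation of the middle term $\supp(|s|)$: under the module reading one must check that the module support of a nonnegative upper semicontinuous function coincides with the closure of its nonvanishing set, but this follows from exactly the two inclusions above, and upper semicontinuity plays no role in either direction.
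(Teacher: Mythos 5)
Your proof is correct and follows essentially the same route as the paper's: the inclusion $\{|s|\neq 0\}\subseteq\supp(s)$ via the pointwise identity $|fs|(x)=|f(x)|\,|s|(x)$ together with axiom (i), and the reverse inclusion via a Urysohn function supported where $|s|$ vanishes (the paper phrases this as a contradiction rather than contraposition, which is immaterial). Your extra care about the reading of $\supp(|s|)$ is reasonable; the paper simply treats it as the topological support $\overline{\{|s|\neq 0\}}$ of a function, which is the intended (and unproblematic) interpretation, noting only that $\mathrm{U}_0(\Omega)$ is not itself a Banach module, so the "module reading" cannot be invoked literally.
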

\begin{proof}
Let $x \in \Omega$ with $|s|(x) \neq 0$ and $f \in \mathrm{C}_0(\Omega)$ with $f(x)\neq 0$. Then $|fs|(x) = |f|(x)|s|(x) \neq 0$ and therefore $\|fs\| \neq 0$.\\
Conversely, let $x \in \supp(s)$. Assume there is an open neighborhood $U$ of $x$ such that $|s|(y) = 0$ for every $y \in U$. We then find $f \in \mathrm{C}_0(\Omega)$ with support in $U$ and $f(x) = 1$. But then $|fs| = |f| |s| = 0$ and therefore $fs = 0$ which contradicts $x \in \supp(s)$.
\end{proof}
\begin{proof}[of \cref{latticevsmod}]
The equivalence of (a) and (b) holds by \cref{supportlemma}. Now assume that (a) and (b) hold and that there is $s \in \Gamma$ such that $|\mathcal{T}s| \not\leq \|\mathcal{T}\| \cdot T_\varphi|s|$. We then find $x \in \Omega$ with $\|\mathcal{T}\| \cdot |s|(x) < |\mathcal{T}s|(\varphi(x))$. Since $|s|$ is upper semi-continuous, we find $\varepsilon >0$ and an open neighborhood $V$ of $x$ such that $\|\mathcal{T}\|\cdot|s|(z) \leq |\mathcal{T}s|(\varphi(x)) - \varepsilon$ for all $z \in V$. Now take a function $f \in \mathrm{C}_{0}(\Omega)_+$ with support in $V$ such that $0 \leq f \leq \mathbbm{1}$ and $f(x) = 1$. Setting $\tilde{s} \defeq fs$ we obtain
\begin{align*}
\|\mathcal{T}\|\cdot \|\tilde{s}\| + \varepsilon &= \sup_{z \in V} \|\mathcal{T}\| \cdot f(z)\cdot |s|(z) + \varepsilon  \\&
\leq |\mathcal{T}s|(\varphi(x)) = (T_\varphi f)(\varphi(x))\cdot |\mathcal{T}s|(\varphi(x))
= |\mathcal{T}(fs)|(\varphi(x))\\
&\leq \|\mathcal{T}\tilde{s}\|,
\end{align*} 
which contradicts the definition of $\|\mathcal{T}\|$. The implication \enquote{(c) $\Rightarrow$ (d)} is obvious and \enquote{(d) $\Rightarrow$ (b)} follows from \cref{suppdesc}. The rest of the theorem follows from \cref{homismorph}.
\end{proof}
\begin{remark}
In view of \cref{vectorvaluednorm1} and \cref{latticevsmod}, the assignments of \cref{main} also define an essentially surjective and fully faithful functor from the category of dynamical Banach bundles over a topological dynamical system $(\Omega;\varphi)$ to the category having as objects pairs of $\mathrm{U}_0(\Omega)$-normed modules and monoid representations of \enquote{dominated operators} (in the sense of \cref{latticevsmod} (c)) and as morphisms operators $V \in \mathscr{L}(\Gamma,\Lambda)$ between $\mathrm{U}_0(\Omega)$-normed modules such that there is an $m >0$ with $|Vs| \leq m \cdot |s|$ for all $s \in \Gamma$ which are compatible with the monoid representations.
\end{remark}
\subsection{\texorpdfstring{$\mathrm{L}^1(X)$-} nnormed modules}
AL-modules also admit a vector-valued norm. 
\begin{definition}\label{dualnormed}
Let $\Omega$ be a locally compact space and $\Gamma$ a Banach module over $\mathrm{C}_0(\Omega)$. A mapping 
\begin{align*}
| \cdot | \colon \Gamma \longrightarrow \mathrm{C}_0(\Omega)'_+
\end{align*}
is an \emph{$\mathrm{C}_0(\Omega)'$-valued norm} if
\begin{enumerate}[(i)]
\item $\||s|\| = \|s\|$,
\item $|fs| = |f| \cdot |s|$,
\item $|s_1 + s_2| \leq |s_1| + |s_2|$,  
\end{enumerate}
for all $s,s_1,s_2 \in \Gamma$ and $f \in \mathrm{C}_0(\Omega)$. A Banach module over $A$ together with a $\mathrm{C}_0(\Omega)'$-valued norm is called a \emph{$\mathrm{C}_0(\Omega)'$-normed module}.
\end{definition}
Again the main part of the following result is due to Cunningham in the compact case (see Theorem 4 of \cite{Cunn1967}). We give a new proof in the general case and also provide an explicit formula for the vector-valued norm.
\begin{proposition}\label{alnorm}
Let $\Omega$ be a locally compact space. For a Banach module $\Gamma$ over $\mathrm{C}_0(\Omega)$ the following are equivalent. 
\begin{enumerate}[(a)]
\item $\Gamma$ is an AL-module over $\mathrm{C}_0(\Omega)$.
\item $\Gamma$ admits a $\mathrm{C}_0(\Omega)'$-valued norm.
\end{enumerate} 
If these assertions hold, then the $\mathrm{C}_0(\Omega)'$-valued norm is unique and given by $|s|(f) \defeq \|fs\|$ for all $s \in \Gamma$ and $f \in \mathrm{C}_0(\Omega)_+$.
\end{proposition}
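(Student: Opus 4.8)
The plan is to prove all three assertions by a direct argument built around the candidate formula $|s|(f) = \|fs\|$, working in the locally compact setting from the start rather than reducing to the compact case; this simultaneously yields the explicit expression for the vector-valued norm. The two structural inputs I would rely on are \cref{modulelattice}, which gives $\|hs\| = \||h|s\|$ for all $h \in \mathrm{C}_0(\Omega)$ and $s \in \Gamma$, and an approximate unit $(e_i)_{i \in I}$ of $\mathrm{C}_0(\Omega)$ with $0 \leq e_i \leq 1$; non-degeneracy of $\Gamma$ ensures $e_i s \to s$ and hence $\|e_i s\| \to \|s\|$.

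For the implication (a) $\Rightarrow$ (b), I would fix $s \in \Gamma$ and consider the map $\rho_s \colon \mathrm{C}_0(\Omega)_+ \longrightarrow \R_{\geq 0}$ given by $\rho_s(f) \defeq \|fs\|$. Positive homogeneity is immediate, and additivity is precisely the AL-characterization $\|f_1 s + f_2 s\| = \|f_1 s\| + \|f_2 s\|$ from the remark following the definition of AL-modules (itself a consequence of \cref{modulelattice}). The standard extension of an additive, positively homogeneous functional from the positive cone then produces a unique linear functional $|s|$ on $\mathrm{C}_0(\Omega,\R)$, extended $\C$-linearly, which is positive by construction and bounded because $\rho_s(f) \leq \|f\|\|s\|$; this already gives $\||s|\| \leq \|s\|$. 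Property (i) follows from the reverse estimate $\|s\| = \lim_i \|e_i s\| = \lim_i |s|(e_i) \leq \||s|\|$. Property (ii) I would check by evaluating on $g \in \mathrm{C}_0(\Omega)_+$: using $\|hs\| = \||h|s\|$ one obtains $|fs|(g) = \|(gf)s\| = \|(g|f|)s\| = |s|(|f|g) = (|f|\cdot|s|)(g)$, and equality of the two positive functionals follows since both are determined on the cone. Property (iii) is the triangle inequality of $\|\cdot\|$ read off on the positive cone.

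For (b) $\Rightarrow$ (a) I would use that $\mathrm{C}_0(\Omega)'$, being a space of measures, is an AL-space, so its norm is additive on $\mathrm{C}_0(\Omega)'_+$. Given $f_1,f_2 \in \mathrm{C}_0(\Omega)_+$, properties (i) and (ii) give $\|f_1 s + f_2 s\| = \|(f_1+f_2)\cdot|s|\| = \|f_1\cdot|s| + f_2\cdot|s|\|$; since $f_1\cdot|s|, f_2\cdot|s| \in \mathrm{C}_0(\Omega)'_+$, additivity of the measure norm returns $\|f_1 s\| + \|f_2 s\|$, which is the AL-condition. Uniqueness is obtained in the same spirit: for any $\mathrm{C}_0(\Omega)'$-valued norm and $f \in \mathrm{C}_0(\Omega)_+$, properties (i) and (ii) force $\|fs\| = \|f\cdot|s|\|$, and computing the norm of the positive functional $f\cdot|s|$ as $\sup\{|s|(fg) : 0 \leq g \leq 1\} = |s|(f)$ (the supremum being attained along the approximate unit, as $fe_i \to f$) recovers exactly the formula $|s|(f) = \|fs\|$.

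The step requiring the most care is (a) $\Rightarrow$ (b) in the non-unital setting. The delicate points are confirming that the cone functional $\rho_s$ extends to a genuinely bounded functional with $\||s|\| = \|s\|$ rather than merely $\leq$ — which is where non-degeneracy together with the approximate unit is essential — and invoking \cref{modulelattice} to pass from $f$ to $|f|$ inside the norm when verifying the multiplicativity (ii). The purely cone-theoretic extension itself is routine (a Jordan-type decomposition $f = f_+ - f_-$), but I would spell it out enough to certify boundedness uniformly in $f$, so that $|s|$ indeed lands in $\mathrm{C}_0(\Omega)'_+$.
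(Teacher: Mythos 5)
Your proof is correct and follows essentially the same route as the paper's: the same candidate formula $|s|(f) = \|fs\|$, extension of the additive, positively homogeneous cone functional to a positive element of $\mathrm{C}_0(\Omega)'$ (the paper cites Lemma 1.3.3 of Meyer--Nieberg here), the approximate-unit computations both for $\||s|\| = \|s\|$ and for uniqueness, and the identity $\|hs\| = \||h|s\|$ from \cref{modulelattice} to verify $|fs| = |f|\cdot|s|$. The only difference is one of exposition: you spell out the direction (b) $\Rightarrow$ (a) and the cone-extension step, which the paper dispatches by declaring them clear or by citation.
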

\begin{proof}
It is clear that (b) implies (a) since $\mathrm{C}_0(\Omega)'$ is an AL-space (cf. Proposition 9.1 of \cite{Scha1974}). If (a) holds, we define $|s|(f) = \|fs\|$ for all $s \in \Gamma$ and $f \in \mathrm{C}_0(\Omega)_+$. For every $s \in \Gamma$ the map $|s|\colon \mathrm{C}_0(\Omega)_+ \rightarrow \R_{\geq 0}$ is additive and positively homogeneous and therefore has a unique positive extension $|s| \in A'$ by Lemma 1.3.3 of \cite{Meye1991} (which obviously also holds in the complex case). Now take an approximate unit $(e_i)_{i \in I}$ for $\mathrm{C}_0(\Omega)$. Then
\begin{align*} 
\|s\| = \lim_{i} \|e_i s\| = \lim_{i}|s|(e_i) = \||s|\|.
\end{align*}
It is clear that $|s_1+s_2| \leq |s_1|+|s_2|$ for all $s_1,s_2 \in \Gamma$. Finally, let $f \in \mathrm{C}_0(\Omega)$ and $s \in \Gamma$. Then
\begin{align*}
|fs|(g) = \|gfs\| = \||gf|s\| = |s|(|f|g) = (|f| \cdot |s|)(g)
\end{align*}
for every $g \in \mathrm{C}_0(\Omega)_+$. This shows $|f\cdot s| = |f| \cdot |s|$.\smallskip

To prove uniqueness, let $|\cdot |$ be any $\mathrm{C}_0(\Omega)'$-valued norm on $\Gamma$ and let $(e_i)_{i \in I}$ be an approximate unit for $\mathrm{C}_0(\Omega)$. Then 
\begin{align*}
\|fs\| = \lim_{i} |fs|(e_i) = \lim_i |s|(f e_i) = |s|(f)
\end{align*}
for each $s \in \Gamma$ and $f \in \mathrm{C}_0(\Omega)_+$, showing the claim.
\end{proof}
Given a measure space $X$, we can consider $\mathrm{L}^\infty(X)$ as a space $\mathrm{C}(K)$ for some compact space $K$. If $\Gamma$ is an AL-module over $\mathrm{L}^\infty(X)$, \cref{alnorm} then yields a vector-valued norm $|\cdot| \colon \Gamma \longrightarrow \mathrm{L}^\infty(X)'_+$. On the other hand, if $E$ is a measurable Banach bundle over $X$, then the mapping
\begin{align*}
|\cdot|\colon \Gamma^1(E) \longrightarrow \mathrm{L}^1(X)_+, \quad s \mapsto \|s(\cdot)\|
\end{align*}
satisfies properties (i) -- (iii) of \cref{dualnormed} and since $\mathrm{L}^1(X)$ embeds canonically (as a Banach lattice and as a Banach module over $\mathrm{L}^\infty(X)$) into $\mathrm{L}^\infty(X)'$, this already defines the unique $\mathrm{L}^\infty(X)'$-valued norm. In particular, an AL-module over $\mathrm{L}^\infty(X)$ can only be isometrically isomorphic to $\Gamma^1(E)$ for some measurable Banach bundle $E$ over $X$ if the $\mathrm{L}^\infty(X)'$-valued norm takes values in (the canonical image of) $\mathrm{L}^1(X)$. This is not always the case as the following example shows.
\begin{example}\label{counterexampleAL}
Let $X$ be any measure space and consider $\Gamma\defeq \mathrm{L}^\infty(X)'$ as a Banach module over $\mathrm{L}^\infty(X)$. Then $\Gamma$ is an AL-module over $\mathrm{L}^\infty(X)$ by \cref{alvsam} since $\mathrm{L}^1(X)$ is an AL-module over $\mathrm{L}^\infty(X)$. The usual modulus $|\cdot| \colon \mathrm{L}^\infty(X)' \rightarrow \mathrm{L}^\infty(X)'$ is given by
\begin{align*}
|s|(f) = \sup\{|s(g)| \mid 0 \leq |g| \leq f\}
\end{align*}
for $f \in \mathrm{L}^\infty(X)_+$ and $s \in \mathrm{L}^\infty(X)'$ (see Corollary 1 to Proposition II.4.2 of \cite{Scha1974}). It is easy to see that
\begin{align*}
\sup\{|s(g)| \mid 0 \leq |g| \leq f\} = \sup\{|s(gf)| \mid 0 \leq |g| \leq \mathbbm{1}\} = \|fs\|
\end{align*}
for $f \in \mathrm{L}^\infty(X)_+$ and $s \in \mathrm{L}^\infty(X)'$ and therefore $|\cdot|$ is the unique $\mathrm{L}^\infty(X)'$-valued norm. 
If $\mathrm{L}^1(X)$ is not finite-dimensional, then $\mathrm{L}^1(X)$ is not reflexive (see Corollary 2 of Theorem II.9.9 in \cite{Scha1974}). By Proposition 8.3 (iii) and (v) of  \cite{Scha1974} there are also positive elemnents in $\mathrm{L}^\infty(X)'$ which are not contained in (the canonical image of) $\mathrm{L}^1(X)$, i.e., there is $s \in \Gamma$ with $|s| \in \mathrm{L}^\infty(X)'\setminus \mathrm{L}^1(X)$.
\end{example}
\begin{definition}
Let $X$ be a measure space. An $\mathrm{L}^\infty(X)'$-normed module $\Gamma$ is called an \emph{$\mathrm{L}^1(X)$-normed module} if $|s| \in \mathrm{L}^1(X)$ for every $s \in \Gamma$.
\end{definition}
We now state and prove our second main result. Here a measure space $X$ is \emph{separable} if there is a sequence $(A_n)_{n \in \N}$ of measurable subsets of $\Omega_X$ such that for every $B \in \Sigma_X$ and every $\varepsilon >0$ there is an $n \in \N$ with $\mu_X(A_n \Delta B) < \varepsilon$.
\begin{theorem}\label{main2}
Let $G$ be a (discrete) group, $S\subset G$ be a submonoid and $(X;\varphi)$ a measure preserving $G$-dynamical system with $X$ separable. Then the assignments
\begin{align*}
(E;\Phi) &\mapsto (\Gamma^1(E); \mathcal{T}_\Phi)\\
\Theta &\mapsto V_{\Theta}
\end{align*}
define an essentially surjective, fully faithful functor from the category of $S$-dynamical separable measurable Banach bundles over $(X;\varphi)$ to the category of $S$-dynamical separable $\mathrm{L}^1(X)$-normed modules over $(\mathrm{L}^\infty(X);T_{\varphi})$.
\end{theorem}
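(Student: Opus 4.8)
The plan is to run the same three-step reduction that turned \cref{main} into the single-operator representation \cref{homismorph}, the dynamical reconstruction \cref{esssurj} and the non-dynamical object-level representation \cref{representationbundle}; here these roles are taken over by the measurable analogues, namely a single-operator lemma for $\Gamma^1$-spaces and the non-dynamical representation \cref{representationAL}. One pleasant simplification is that, since $G$ carries the discrete topology, an $S$-dynamical measurable Banach bundle demands no joint continuity and no local boundedness: one only needs each $\Phi_g$ to be a morphism over $\varphi_g$ together with the purely algebraic relations $\Phi_g \circ \Phi_h = \Phi_{gh}$ and $\Phi_1 = \mathrm{Id}_E$. Consequently the counterpart of \cref{esssurj} collapses to a formality, and essentially everything rests on the measurable single-operator representation.

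For essential surjectivity, start with an $S$-dynamical separable $\mathrm{L}^1(X)$-normed module $(\Gamma;\mathcal{T})$ over $(\mathrm{L}^\infty(X);T_\varphi)$. First I would apply \cref{representationAL} to obtain a separable measurable Banach bundle $E$ over $X$ and an isometric module isomorphism identifying $\Gamma$ with $\Gamma^1(E)$, so that we may assume $\Gamma = \Gamma^1(E)$. Each operator $\mathcal{T}(g)$ is by hypothesis a $T_\varphi(g)$-homomorphism of $\Gamma^1(E)$. Viewing $\mathrm{L}^\infty(X)$ as some $\mathrm{C}(K)$ and using the $\mathrm{L}^1(X)$-valued norm of \cref{alnorm}, I would invoke \cref{preparationAL} to recast this property as a domination estimate and then, via the measurable single-operator lemma described below, produce a \emph{unique} bundle morphism $\Phi_g$ over $\varphi_g$ with $\mathcal{T}(g) = \mathcal{T}_{\Phi_g}$. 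Uniqueness is decisive: from $\mathcal{T}(1) = \mathrm{Id}$ and $\mathcal{T}(g)\mathcal{T}(h) = \mathcal{T}(gh)$ it forces $\Phi_1 = \mathrm{Id}_E$ and $\Phi_g \circ \Phi_h = \Phi_{gh}$, relations which are identities of morphisms and hence only required almost everywhere. Thus $(E;\Phi)$ is an $S$-dynamical measurable Banach bundle, separable because $E$ is, with $\mathcal{T}_\Phi = \mathcal{T}$.

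Full faithfulness follows from the non-dynamical version of the same correspondence: for bundles $E, F$ it identifies the module homomorphisms $V \in \mathscr{L}(\Gamma^1(E),\Gamma^1(F))$ bijectively with the bundle morphisms $\Theta \colon E \to F$ over $\mathrm{id}_X$ through $V = V_\Theta$. Under this bijection the intertwining relation $V\,\mathcal{T}_\Phi(g) = \mathcal{T}_\Psi(g)\,V$ translates, once more by uniqueness, into $\Theta \circ \Phi_g = \Psi_g \circ \Theta$ almost everywhere for every $g \in S$; that is, $V = V_\Theta$ is a homomorphism of $S$-dynamical modules precisely when $\Theta$ is a morphism of the associated dynamical bundles. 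Preservation of identities and of composition is immediate from $V_{\Theta_2 \circ \Theta_1} = V_{\Theta_2} V_{\Theta_1}$, which completes the verification that the assignment is an essentially surjective, fully faithful functor.

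The main obstacle is the measurable single-operator lemma itself, the analogue of \cref{homismorph}. In the topological case one reconstructs $\Phi$ pointwise from the neighbourhood base of \cref{topology}; here no such base exists and the argument must be measure-theoretic. I would first use \cref{preparationAL} and the $\mathrm{L}^1(X)$-valued norm to show that a $T_\varphi$-homomorphism $\mathcal{T}$ is \emph{dominated}, i.e.\ $|\mathcal{T}s| \leq \|\mathcal{T}\| \cdot T_\varphi|s|$ for all $s$. This estimate, evaluated at $\varphi(x)$, gives $\|(\mathcal{T}s)(\varphi(x))\| \leq \|\mathcal{T}\|\,\|s(x)\|$ for almost every $x$, so that $s(x) = 0$ forces $(\mathcal{T}s)(\varphi(x)) = 0$ off a null set; hence the prescription $\Phi_x s(x) \defeq (\mathcal{T}s)(\varphi(x))$ yields a well-defined fibre operator $\Phi_x \in \mathscr{L}(E_x,F_{\varphi(x)})$ of norm at most $\|\mathcal{T}\|$ for almost every $x$. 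The delicate points are to glue these fibre operators into a genuine premorphism---measurability is obtained from a countable family of sections whose fibrewise values are dense, supplied by separability of $E$, and essential boundedness from $\|\Phi\| = \|\mathcal{T}\|$---and to verify $\mathcal{T} = \mathcal{T}_\Phi$ on all of $\Gamma^1(E)$ by approximating arbitrary integrable sections by this dense family. Since the dynamical relations $\Phi_g \circ \Phi_h = \Phi_{gh}$ and $\Phi_1 = \mathrm{Id}_E$ are only required to hold almost everywhere, they follow for each pair $(g,h)$ directly from the corresponding operator identities by the uniqueness just established, which is exactly what renders the analogue of \cref{esssurj} trivial.
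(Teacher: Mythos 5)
Your overall route is exactly the paper's: represent the objects via \cref{representationAL}, treat single operators via the measurable analogue of \cref{homismorph} (this is the paper's \cref{preparationAL}, whose proof you sketch correctly, including the key point that one must work with a countable fiberwise-dense family of sections and rational coefficients before the fibre operators $\Phi_x$ can be defined off a single null set), and observe that for discrete $S$ the dynamical reconstruction is forced by uniqueness alone (the paper's \cref{cormeas}). Essential surjectivity and full faithfulness are argued as in the paper.

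There is, however, one genuine omission: you never verify that the functor is well defined into the stated target category, i.e.\ that $(\Gamma^1(E);\mathcal{T}_\Phi)$ is a \emph{separable} $\mathrm{L}^1(X)$-normed module whenever $(E;\Phi)$ is a separable dynamical bundle. This is not a formality, and it is precisely the only place where the hypothesis that the measure space $X$ is separable enters --- a hypothesis your argument never uses, which is the tell-tale sign of the gap. Separability of the bundle alone does not suffice: for the trivial bundle $E$ with fibre $\K$ over a non-separable measure space $X$ (a separable bundle, since the constant section $\mathbbm{1}$ has dense linear span in every fibre) one has $\Gamma^1(E) = \mathrm{L}^1(X)$, which is not separable. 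The paper closes this gap with \cref{lemmasep}: one combines a countable family $(A_m)_{m \in \N}$ of measurable sets that approximates $\Sigma_X$ in measure (separability of $X$) with a normalized, fiberwise-dense, integrable sequence of sections $(s_n)_{n \in \N}$ as in \cref{intsequence}, and shows that the countable set $\{\mathbbm{1}_{A_m}s_n \mid m,n \in \N\}$ is total in $\Gamma^1(E)$; the density argument itself rests on \cref{generatedsep} and dominated convergence. Adding this step (or citing \cref{lemmasep}) completes your proof.
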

We start by showing that separable Banach bundles over separable measure spaces in fact induce separable spaces of sections.
\begin{proposition}\label{lemmasep}
Let $E$ be a separable measurable Banach bundle over a separable measure space $X$. Then $\Gamma^1(E)$ is separable.
\end{proposition}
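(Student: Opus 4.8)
The plan is to produce an explicit countable dense subset of $\Gamma^1(E)$. Fix a countable dense subfield $Q\subset\K$ (the rationals, resp.\ the Gaussian rationals), a sequence $(A_m)_{m\in\N}$ witnessing the separability of $X$, and a generating sequence $(s_n)_{n\in\N}$ for $E$, so that $\lin\{s_n(x)\mid n\in\N\}$ is dense in $E_x$ for almost every $x\in\Omega_X$. It is convenient to first arrange that the $s_n$ are integrable: fixing an exhaustion $\Omega_X=\bigcup_{j}\Omega_j$ by sets of finite measure and replacing the $s_n$ by the countable family $\mathbbm{1}_{\Omega_j\cap\{|s_n|\le N\}}\,s_n$ (which lies in $\mathcal{M}_E$ by \cref{defmeasurablebundle}(i) and whose norm function has integral at most $N\mu_X(\Omega_j)$), one obtains integrable measurable sections whose values still span a dense subspace of almost every fibre. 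Relabelling, I may thus assume every $s_n\in\Gamma^1(E)$. The candidate dense set is then the countable collection $\mathcal{D}$ of all finite $Q$-linear combinations of the sections $\mathbbm{1}_{A_m}s_n$; each such section is integrable, so $\mathcal{D}\subset\Gamma^1(E)$.

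Given $s\in\Gamma^1(E)$ and $\varepsilon>0$, I would first reduce to a tractable case. Since $|s|$ is integrable, dominated convergence shows $\mathbbm{1}_{\Omega_j\cap\{|s|\le M\}}s\to s$ in $\Gamma^1(E)$, so I may assume $s$ is supported on a set $B$ of finite measure with $\|s(x)\|\le M$ there. The core is a fibrewise approximation. For $N\in\N$ set $d_N(x)\defeq\mathrm{dist}\bigl(s(x),\lin\{s_1(x),\dots,s_N(x)\}\bigr)$; this is measurable, being the countable infimum $\inf_{c\in Q^N}|s-\sum_{i=1}^N c_is_i|(x)$ of norm functions of measurable sections (\cref{defmeasurablebundle}(ii)), and $d_N(x)\to 0$ for almost every $x$ by density of the span. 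As $0\le d_N\le|s|\le M\mathbbm{1}_B$, dominated convergence provides an $N$ with $\int_B d_N\,d\mu_X<\varepsilon$.

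Fixing this $N$, a small $\eta>0$, and an enumeration $(q^{(k)})_{k\in\N}$ of $Q^N$ (dense in $\K^N$), I would discretise the coefficients. The sets $B_k\defeq\{x\in B\mid \|s(x)-\sum_{n=1}^N q^{(k)}_n s_n(x)\|<d_N(x)+\eta\}$ are measurable and cover $B$ up to a null set, since for each $x$ the set of admissible coefficients is open and nonempty and $Q^N$ is dense. Disjointifying via $C_k\defeq B_k\setminus\bigcup_{l<k}B_l$ and truncating at a large $K$, I form the section $u\defeq\sum_{n=1}^N g_n s_n$ with simple coefficients $g_n\defeq\sum_{k\le K}q^{(k)}_n\mathbbm{1}_{C_k}$. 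Estimating $\int\|s-u\|\,d\mu_X$ separately on $\bigcup_{k\le K}C_k$, where $\|s-u\|\le d_N+\eta$ so that the contribution is at most $\int_B d_N\,d\mu_X+\eta\mu_X(B)<\varepsilon+\eta\mu_X(B)$, and on the remainder of $B$, where $u=0$ and $\int|s|\to 0$ as $K\to\infty$ by dominated convergence, this norm becomes arbitrarily small for suitable $\eta$ and $K$. The resulting $u$ lies in the span of finitely many sections $\mathbbm{1}_{C_k}s_n$ with each $C_k$ of finite measure and coefficients in $Q$.

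Finally I would replace the arbitrary finite-measure sets $C_k$ by members of the generating family. For each $n$ the finite measure $\nu_n(A)\defeq\int_A|s_n|\,d\mu_X$ is absolutely continuous with respect to $\mu_X$, so by the $\varepsilon$--$\delta$ form of absolute continuity and the separability of $X$ one can choose indices $m$ with $\mu_X(A_m\Delta C_k)$, and hence $\|\mathbbm{1}_{C_k}s_n-\mathbbm{1}_{A_m}s_n\|_{\Gamma^1(E)}=\nu_n(A_m\Delta C_k)$, as small as desired; the coefficients are already in $Q$. Hence $u$, and therefore $s$, is approximated by elements of $\mathcal{D}$, proving that $\mathcal{D}$ is dense. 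The main obstacle is exactly the fibrewise step: upgrading the pointwise density of $\lin\{s_n(x)\}$ in the fibres to a genuine $\Gamma^1$-approximation of $s$ by a single measurable section built from finitely many $s_n$. The coefficient discretisation above is what lets me achieve this measurably without invoking a measurable selection theorem.
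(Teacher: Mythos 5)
Your proof is correct, and its overall shape coincides with the paper's: the countable dense set is in both cases the $Q$-linear span of $\{\mathbbm{1}_{A_m}s_n \mid m,n \in \N\}$, the core difficulty is passing from fibrewise density to $\Gamma^1$-density, and both solve it by the same enumerate-rational-coefficients-and-disjointify selection followed by dominated convergence and, finally, by exchanging arbitrary measurable sets for the countable family $(A_m)$. The execution differs in two places, though. First, the paper packages the core step into \cref{generatedsep} (almost-everywhere approximation by elements of $\lin\{\mathbbm{1}_A s_n \mid A \in \Sigma_X, n \in \N\}$) and \cref{intsequence}, where $\Gamma^1$-convergence is then obtained by citing Lemma 4.3 of \cite{FeDo1988} for a dominated a.e.-approximating sequence; you avoid both the detour through a.e. convergence and the external citation by working directly in $\mathrm{L}^1$, using the distance functions $d_N$ (measurable as countable infima over $Q^N$ --- a nice touch) and the explicit $d_N + \eta$ estimate on the disjointified pieces $C_k$. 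Second, the paper normalizes the generating sections so that $|s_n| = \mathbbm{1}_{\{|s_n| \neq 0\}}$ with finite-measure support, which makes the final exchange cost at most $\mu_X(A \Delta A_m)$; you keep the $s_n$ merely integrable and instead invoke absolute continuity of $\nu_n(A) = \int_A |s_n|\,\mathrm{d}\mu_X$. What your version buys is self-containedness and quantitative control within a single argument; what the paper's decomposition buys is reusability, since \cref{generatedsep} and \cref{intsequence} are used again in the proof of \cref{preparationAL}, where genuine a.e. approximation of arbitrary (not necessarily integrable) measurable sections is needed to construct the fibre maps $\Phi_x$.
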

The proof of the following lemma is based on the proof of Proposition 4.4 of \cite{FeDo1988} (see also Lemma A.3.5 of \cite{DeRe2000} for a similar result).
\begin{lemma}\label{generatedsep}
Let $E$ be a separable Banach bundle over a measure space $X$ and $(s_n)_{n \in \N}$ in $\mathcal{M}_E$ such that $\lin \{s_n(x)\mid n \in \N\}$ is dense in $E_x$ for almost every $x \in \Omega_X$. Then $\lin \{s_n \mid n \in \N\}$ generates $E$, i.e., every $s \in \mathcal{M}_E$ is an almost everywhere limit of a sequence in $\lin\{\mathbbm{1}_As_n\mid A \in \Sigma_X, n \in \N\}$.
\end{lemma}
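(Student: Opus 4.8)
The plan is to show that the given $s \in \mathcal{M}_E$ lies in the measurable Banach bundle generated by $\{s_n \mid n \in \N\}$, whose space of sections I denote $\tilde{\mathcal{M}}$. By \cref{generatedmeasbundle}, $\tilde{\mathcal{M}}$ consists exactly of all almost everywhere limits of sequences in $\lin\{\mathbbm{1}_A s_n \mid A \in \Sigma_X, n \in \N\}$, so proving $s \in \tilde{\mathcal{M}}$ is precisely the assertion. Since $\tilde{\mathcal{M}}$ is itself a measurable Banach bundle, it is a linear space that is closed under multiplication by measurable scalar functions (property (i)) and under almost everywhere limits (property (iii)); in particular $f s_n \in \tilde{\mathcal{M}}$ for every measurable $f \colon \Omega_X \longrightarrow \K$ and every $n$. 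It therefore suffices to construct a sequence $t_N = \sum_{n=1}^{N} f_{N,n} s_n$ with measurable coefficient functions $f_{N,n}$ converging to $s$ almost everywhere.

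First I would introduce the fiberwise distance $d_N(x) \defeq \mathrm{dist}\bigl(s(x), \lin\{s_1(x),\dots,s_N(x)\}\bigr)$. Fixing a countable dense subset $Q_N \subset \K^N$ and using that $\lambda \mapsto \|s(x) - \sum_n \lambda_n s_n(x)\|$ is continuous, one has $d_N = \inf_{q \in Q_N} |s - \sum_n q_n s_n|$; each function in this family is measurable by property (ii) (as $s - \sum_n q_n s_n \in \mathcal{M}_E$ by linearity), so $d_N$ is measurable as a countable infimum. The sequence $(d_N)_N$ is nonincreasing, and since the separability hypothesis gives that $\lin\{s_n(x) \mid n\}$ is dense in $E_x$ for almost every $x$, we get $d_N(x) \downarrow 0$ almost everywhere.

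The heart of the argument is the \emph{measurable} choice of approximating coefficients. Fix $N$, enumerate $Q_N = \{q^{(1)}, q^{(2)}, \dots\}$, and set $A_k \defeq \{x \mid \|s(x) - \sum_n q^{(k)}_n s_n(x)\| < d_N(x) + 1/N\} \setminus \bigcup_{j<k} A_j$. Each $A_k$ is measurable, and by density of $Q_N$ together with the continuity noted above the sets $A_k$ partition $\Omega_X$ up to a null set. Define $f_{N,n} \defeq \sum_k q^{(k)}_n \mathbbm{1}_{A_k}$, which is measurable because the $A_k$ are disjoint (so it is a countably-valued measurable function), and put $t_N \defeq \sum_{n=1}^{N} f_{N,n} s_n$. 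By construction $\|s(x) - t_N(x)\| < d_N(x) + 1/N$ for almost every $x$, so $t_N \to s$ almost everywhere.

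To conclude, note that $f_{N,n} s_n \in \tilde{\mathcal{M}}$ since $f_{N,n}$ is measurable and $s_n \in \tilde{\mathcal{M}}$; concretely, $f_{N,n} s_n$ is the almost everywhere limit of the finite partial sums $\sum_{k \leq m} q^{(k)}_n \mathbbm{1}_{A_k} s_n \in \lin\{\mathbbm{1}_A s_n\}$, keeping us inside $\tilde{\mathcal{M}}$. Hence $t_N \in \tilde{\mathcal{M}}$, and closure under almost everywhere limits yields $s = \lim_N t_N \in \tilde{\mathcal{M}}$, as desired. I expect the main obstacle to be exactly this measurable selection step: the purely pointwise density of the fiberwise spans must be upgraded to a measurable approximation, and the device of testing against a fixed countable dense subset of $\K^N$ and disjointifying the resulting measurable sets is the standard substitute for an explicit measurable selection theorem (this is the point where the argument follows the proof of Proposition 4.4 of \cite{FeDo1988}). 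The remaining ingredients—measurability of $d_N$, the telescoped error estimate, and the bookkeeping needed to remain inside $\tilde{\mathcal{M}}$—are routine.
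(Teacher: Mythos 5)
Your proof is correct and takes essentially the same route as the paper's: both reduce the claim to membership in the bundle generated by $\lin\{s_n\mid n \in \N\}$ (via \cref{generatedmeasbundle}) and both obtain the needed measurable selection by testing against a countable rational-coefficient family and disjointifying the resulting measurable sets---indeed both arguments are patterned on Proposition 4.4 of \cite{FeDo1988}. The only difference is bookkeeping: the paper absorbs the rational-coefficient device into a preliminary reduction (replacing $(s_n)_{n \in \N}$ by its $\Q$-linear hull so that $\{s_n(x)\mid n\in\N\}$ itself is fiberwise dense, then selecting for each $\varepsilon>0$ the first $s_n$ within $\varepsilon$), whereas you keep explicit distance functions $d_N$ and countably-valued coefficient functions $f_{N,n}$.
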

\begin{proof}
By the set $\{s_n\mid n \in \N\}$ with its linear hull over $\Q$ (if $\K = \R)$ or $\Q + i\Q$ (if $\K = \C$), we may assume that $\{s_n(x)\mid n \in \N\}$ is dense in $E_x$ for almost every $x \in \Omega_X$.
Now let $s \in \mathcal{M}_E$, $\varepsilon > 0$ and set
\begin{align*}
A_n \defeq \{x \in \Omega_X \mid \|s(x) - s_n(x)\| \leq \varepsilon\} \in \Sigma_X
\end{align*}
for every $n \in \N$. Then 
\begin{align*}
\Omega_X \setminus\left(\bigcup_{n \in \N} A_n\right)
\end{align*}
is a nullset. Therefore, $\|s(x) -\tilde{s}(x)\| \leq \varepsilon$ for almost every $x \in \Omega_X$ where
\begin{align*}
\tilde{s}(x) = \begin{cases} s_n(x) &x \in A_n\setminus \bigcup_{k=1}^{n-1} A_{k}, n \in \N,\\
0 &\textrm{else}.
\end{cases}
\end{align*}
Since $\tilde{s}$ is a measurable section with respect to the Banach bundle generated by $\lin \{s_n \mid n \in \N\}$ (see \cref{generatedmeasbundle}), this shows the claim.
\end{proof}
\begin{lemma}\label{intsequence}
Let $E$ be a separable Banach bundle over a measure space $X$. Then there is a sequence $(s_n)_{n \in \N}$ in $\mathcal{M}_E$ such that
\begin{enumerate}[(i)]
\item $\lin \{s_n(x)\mid n \in \N\}$ is dense in $E_x$ for almost every $x \in \Omega_X$,
\item $\mu_X(\{|s_n| \neq 0\}) < \infty$ for every $n \in \N$,
\item $|s_n| = \mathbbm{1}_{\{|s_n| \neq 0\}}$ almost everywhere for every $n \in \N$,
%\item for every $x \in \Omega_X$ and $n \in \N$
%\begin{align*}
%s_{n+1}(x) = 0 \textrm{ or } s_{n+1}(x) \notin \lin\{s_1(x),...,s_n(x)\}.
%\end{align*}
\end{enumerate}
Moreover, for any sequence $(s_n)_{n \in \N}$ in $\mathcal{M}_E$ with properties (i) and (ii), the set
\begin{align*}
\lin \{\mathbbm{1}_{A}s_n\mid A \in \Sigma_X, n \in \N\} \subset\Gamma^1(E)
\end{align*}
is dense in $\Gamma^1(E)$.
\end{lemma}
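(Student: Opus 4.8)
The claim has two independent parts: the existence of a sequence with properties (i)--(iii), and the density assertion for an arbitrary sequence satisfying (i) and (ii). I would prove them separately.

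For existence, I would start from the sequence provided by separability. By \cref{defmeasurablebundle} (iv) there is $(t_n)_{n\in\N}$ in $\mathcal{M}_E$ whose pointwise spans $\lin\{t_n(x)\mid n\in\N\}$ are dense in $E_x$ for almost every $x$. Using $\sigma$-finiteness I fix $B_k\in\Sigma_X$ with $B_k\uparrow\Omega_X$ and $\mu_X(B_k)<\infty$ and pass to the doubly indexed family $\mathbbm{1}_{B_k}t_n$; since $\mathbbm{1}_{B_k}(x)t_n(x)=t_n(x)$ as soon as $x\in B_k$, the pointwise spans are unchanged, so (i) survives, while every member now has support of finite measure, giving (ii). Finally I would normalise pointwise: for a section $t\in\mathcal{M}_E$ the scalar function equal to $\|t(\cdot)\|^{-1}$ on $\{|t|\neq 0\}$ and to $0$ elsewhere is measurable because $|t|$ is, so \cref{defmeasurablebundle} (i) shows that $\hat t:=\|t(\cdot)\|^{-1}t$ again lies in $\mathcal{M}_E$. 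Replacing each $\mathbbm{1}_{B_k}t_n$ by its normalisation multiplies each fibre vector by a nonzero scalar on $\{|t|\neq0\}$ and leaves it zero elsewhere, so neither the pointwise spans nor the supports change; thus (i) and (ii) persist, and now $|\hat t|=\mathbbm{1}_{\{|\hat t|\neq0\}}$, which is exactly (iii). Re-enumerating the resulting countable family gives the desired sequence.

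For the density statement I set $D:=\lin\{\mathbbm{1}_A s_n\mid A\in\Sigma_X,\ n\in\N\}$ and first reduce a general $s\in\Gamma^1(E)$ to a bounded section with support of finite measure. Since $|s|\in\mathrm{L}^1(X)$, the sections $s_k:=\mathbbm{1}_{B_k\cap\{|s|\le k\}}s$ satisfy $|s_k|\uparrow|s|$, whence $\|s-s_k\|_{\Gamma^1}=\||s|-|s_k|\|_{\mathrm{L}^1}\to 0$ by monotone convergence. Fixing $k$ with $\|s-s_k\|_{\Gamma^1}<\varepsilon/2$, it remains to approximate $s_0:=s_k$, which obeys $|s_0|\le M$ and is supported on the finite-measure set $C:=B_k$. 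Here I note that property (ii) is what ensures that the generators $\mathbbm{1}_A s_n$ have finite-measure supports, so that $D$ is a genuine subspace of $\Gamma^1(E)$.

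The main step, and the only genuinely delicate point, is upgrading almost-everywhere approximation to $\mathrm{L}^1$-approximation. Property (i) lets me invoke \cref{generatedsep} to obtain $(u_j)_{j\in\N}\subset D$ with $u_j\to s_0$ almost everywhere; this alone does not yield convergence in $\Gamma^1(E)$, so I would clamp the approximants. As $\mathbbm{1}_{C\cap\{|u_j|\le M+1\}}$ is a measurable scalar function, \cref{defmeasurablebundle} (i) shows that $w_j:=\mathbbm{1}_{C\cap\{|u_j|\le M+1\}}\,u_j$ again lies in $D$, and $|w_j|\le(M+1)\mathbbm{1}_C$ forces $w_j\in\Gamma^1(E)$. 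At any point $x\in C$ with $u_j(x)\to s_0(x)$ one has $|u_j(x)|\to|s_0(x)|\le M$, so $|u_j(x)|\le M+1$ for large $j$ and hence $w_j\to s_0$ almost everywhere; since moreover $\|w_j-s_0\|\le(2M+1)\mathbbm{1}_C\in\mathrm{L}^1(X)$, dominated convergence gives $\|w_j-s_0\|_{\Gamma^1}\to 0$. Choosing $j$ large then yields $\|s-w_j\|_{\Gamma^1}<\varepsilon$ with $w_j\in D$, proving density. The clamp is engineered precisely so that truncating the fibre norm---via multiplication by a measurable indicator---keeps the approximant inside $D$, which is what makes the domination argument available.
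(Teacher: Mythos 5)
Your proof is correct, and its existence half coincides with the paper's argument: both normalize the sections coming from separability and cut them off along a $\sigma$-finite exhaustion, the only difference being that the paper normalizes first and cuts second while you do the reverse. The density half follows the same skeleton as the paper---obtain almost-everywhere approximants in $\lin\{\mathbbm{1}_{A}s_n\mid A \in \Sigma_X,\, n \in \N\}$ via \cref{generatedsep}, then upgrade to convergence in $\Gamma^1(E)$ by Lebesgue's theorem---but you implement the crucial domination step differently. The paper invokes Lemma 4.3 of \cite{FeDo1988}, which directly yields approximants $t_n$ with $t_n \to s$ almost everywhere \emph{and} $|t_n| \leq |s|$ almost everywhere, so dominated convergence applies at once. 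You instead make this step self-contained: you first truncate $s$ to a bounded section $s_0$ supported on a set $C$ of finite measure, and then clamp the approximants $u_j$ by $\mathbbm{1}_{C \cap \{|u_j| \leq M+1\}}$, using that the span above is stable under multiplication by indicators of measurable sets. This buys independence from the cited Fell--Doran lemma at the cost of an extra (routine) reduction; both routes are equally rigorous. One small imprecision in your write-up: property (ii) alone does not place $\mathbbm{1}_{A}s_n$ in $\Gamma^1(E)$, since $|s_n|$ may fail to be integrable on its finite-measure support; this is why the paper phrases the conclusion as density of the \emph{canonical image} of $\lin\{\mathbbm{1}_{A}s_n\mid A \in \Sigma_X,\, n \in \N\}$ in $\Gamma^1(E)$. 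Your clamped sections $w_j$ are genuinely integrable, so your argument proves exactly this (correct) reading of the statement.
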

\begin{proof}
Let $(s_n)_{n \in \N}$ be a sequence in $\mathcal{M}_E$ satisfying (i). Replacing $s_n$ by $\tilde{s}_n$ defined as
\begin{align*}
\tilde{s}_n(x) \defeq \begin{cases} \frac{1}{\|s_n(x)\|}s_n(x) & s_n(x) \neq 0,\\
0 & s_n(x) = 0,
\end{cases}
\end{align*}
for every $n \in \N$ we may assume that (i) and (iii) hold. Now pick a sequence $(A_n)_{n \in \N}$ of measurable subsets of $\Omega_X$ of finite measure such that 
\begin{align*}
	\Omega_X = \bigcup_{m \in \N} A_m.
\end{align*}
Then $\mu_X(\{|\mathbbm{1}_{A_m}s_n|\neq 0\}) < \infty$ for all $m,n \in \N$. Replacing $(s_n)_{n \in \N}$ once again, we may assume that properties (i) -- (iii) are fulfilled.\\
Now assume that $(s_n)_{n \in \N}$ is a sequence $\mathcal{M}_E$ satisfying (i) and (ii) and let $s \in \mathcal{M}_E$ with $\int |s|\, \mathrm{d}\mu_X < \infty$. By \cref{generatedsep} and Lemma 4.3 of \cite{FeDo1988} we find a sequence $(t_n)_{n \in \N}$ in 
\begin{align*}
M \defeq \lin \{\mathbbm{1}_{A}s_n\mid A \in \Sigma_X, n \in \N\} \subset \mathcal{M}_E
\end{align*}
such that $\lim_{n \rightarrow \infty}t_n =  s$ almost everywhere and $|t_n| \leq |s|$ almost everywhere for all $n \in \N$. By Lebesgue's theorem we therefore obtain that the canonical image of $M$ in $\Gamma^1(E)$ is dense in $\Gamma^1(E)$.
\end{proof}

\begin{proof}[of \cref{lemmasep}]
Using the separability of $X$, we pick a sequence $(A_n)_{n \in \N}$ of measurable subsets of $\Omega_X$ such that for every $B \in \Sigma_X$ and every $\varepsilon >0$ there is $n \in \N$ with $\mu_X(A_n \Delta B) < \varepsilon$. Moreover, take a sequence $(s_n)_{n \in \N}$ as in \cref{intsequence}. For each $n \in \N$ and every $A \in \Sigma_X$ we then find an $m \in \N$ with
\begin{align*}
\|\mathbbm{1}_{A}s_n - \mathbbm{1}_{A_m}s_n\| \leq \mu(A \Delta A_m) < \varepsilon.
\end{align*}
This implies that $\{\mathbbm{1}_{A_m}s_n\mid n,m \in \N\}$ is total in $\Gamma^1(E)$.
\end{proof}
The following result characterizes weighted Koopman operators induced by measurable dynamical Banach bundles similarly to the topological setting (cf. \cref{latticevsmod}).
\begin{theorem}\label{preparationAL}
Let $X$ be a measure space, $\varphi \colon X \longrightarrow X$ an automorphism and $\Gamma$ and $\Lambda$ $\mathrm{L}^1(X)$-normed modules. For an operator $\mathcal{T} \in \mathscr{L}(\Gamma,\Lambda)$ the following are equivalent.
\begin{enumerate}[(a)]
\item $\mathcal{T}(fs) = T_\varphi f \cdot \mathcal{T}s$ for all $f \in \mathrm{L}^\infty(X)$ and every $s \in \Gamma$.
\item $|\mathcal{T}s| \leq \|\mathcal{T}\| \cdot T_\varphi|s|$ for every $s \in \Gamma$.
\item There is an $m > 0$ such that $|\mathcal{T}s| \leq m \cdot T_\varphi|s|$ for every $s \in \Gamma$.
\end{enumerate}
Moreover, if $\Gamma = \Gamma^1(E)$ and $\Lambda = \Gamma^1(F)$ for Banach bundles $E$ and $F$ over $X$ with $E$ separable, then the above are also equivalent to the following assertion.
\begin{enumerate}[(a)]
\setcounter{enumi}{3}
\item There is a morphism $\Phi \colon E\longrightarrow F$ over $\varphi$ such that $\mathcal{T} = \mathcal{T}_\Phi$.
\end{enumerate}
If (d) holds, then the morphism $\Phi$ in (d) is unique, 
\begin{align*}
|\Phi|\colon \Omega_X \longrightarrow [0,\infty), \quad x \mapsto \|\Phi_x\|
\end{align*}
defines an element of $\mathrm{L}^\infty(X)$ and
\begin{itemize} 
\item $\sup\{|\mathcal{T}_\Phi s|\mid s \in \Gamma^\infty(E) \textrm{ with } |s| \leq \mathbbm{1}\} = T_\varphi|\Phi| \in \mathrm{L}^\infty(X)$,
\item $\|\Phi\| = \|\mathcal{T}_\Phi\|_{\Gamma^\infty(E)} = \|\mathcal{T}_\Phi\|_{\Gamma^1(E)}$,
\item $\Phi$ is an isometry if and only $\mathcal{T}_\Phi \in \mathscr{L}(\Gamma^1(E),\Gamma^1(F))$ is an isometry.
\end{itemize}
\end{theorem}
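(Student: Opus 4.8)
The plan is to establish the equivalence of (a), (b) and (c) at the level of abstract $\mathrm{L}^1(X)$-normed modules, and only afterwards bring in the bundle structure to add (d) together with the quantitative statements. Throughout I will use the explicit description of the vector-valued norm from \cref{alnorm}, namely $\||t|\|_{\mathrm{L}^1} = \|t\|$ and $|ft| = |f|\cdot|t|$, together with the fact that $\varphi$ is measure-preserving, so that $T_\varphi$ acts isometrically on $\mathrm{L}^1(X)$ and $\mathrm{L}^\infty(X)$ and can be moved across an integral by a change of variables.

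For \enquote{(a)$\Rightarrow$(b)} I would compute, for $f \in \mathrm{L}^\infty(X)_+$ and $s \in \Gamma$, that $\int T_\varphi f\cdot|\mathcal{T}s|\,\mathrm{d}\mu_X = \||T_\varphi f\cdot\mathcal{T}s|\|_{\mathrm{L}^1} = \|\mathcal{T}(fs)\| \le \|\mathcal{T}\|\,\|fs\| = \|\mathcal{T}\|\int f\cdot|s|\,\mathrm{d}\mu_X$, where the middle equality uses (a) together with $|T_\varphi f\cdot\mathcal{T}s| = T_\varphi f\cdot|\mathcal{T}s|$. Substituting $f = \mathbbm{1}_A$ and changing variables by the measure-preserving map $\varphi$ turns this into $\int_A T_{\varphi^{-1}}|\mathcal{T}s|\,\mathrm{d}\mu_X \le \|\mathcal{T}\|\int_A|s|\,\mathrm{d}\mu_X$ for every measurable $A$, whence $|\mathcal{T}s| \le \|\mathcal{T}\|\,T_\varphi|s|$ almost everywhere. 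The implication \enquote{(b)$\Rightarrow$(c)} is trivial. For \enquote{(c)$\Rightarrow$(a)} I would first treat $f = \mathbbm{1}_A$: writing $s = \mathbbm{1}_A s + \mathbbm{1}_{A^c}s$ and applying the domination (c) gives $|\mathcal{T}(\mathbbm{1}_A s)| \le m\,T_\varphi(\mathbbm{1}_A|s|) = m\,\mathbbm{1}_{\varphi(A)}T_\varphi|s|$ and likewise $|\mathcal{T}(\mathbbm{1}_{A^c}s)| \le m\,\mathbbm{1}_{\varphi(A)^c}T_\varphi|s|$. Since $|g|=0$ forces $g=0$, multiplying by $\mathbbm{1}_{\varphi(A)}$ annihilates the second term and fixes the first, yielding $\mathbbm{1}_{\varphi(A)}\mathcal{T}s = \mathcal{T}(\mathbbm{1}_A s) = T_\varphi\mathbbm{1}_A\cdot\mathcal{T}s$. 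Linearity extends this to simple functions, and the uniform density of simple functions in $\mathrm{L}^\infty(X)$ together with boundedness of $\mathcal{T}$, the module action and $T_\varphi$ extends it to all $f\in\mathrm{L}^\infty(X)$.

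The heart of the proof is \enquote{(a)$\Rightarrow$(d)} in the bundle case, and this is where separability of $E$ is essential. I would fix a sequence $(s_n)_{n\in\N}$ as in \cref{intsequence} whose values have dense linear span in almost every fiber and define $\Phi_x$ on $\{s_n(x)\mid n\in\N\}$ by $\Phi_x(s_n(x)) \defeq (\mathcal{T}s_n)(\varphi(x))$. The crucial point is that (a) expresses $(\mathcal{T}s)(\varphi(x))$ for $s=\sum_k f_k s_{n_k}$ as $\sum_k f_k(x)(\mathcal{T}s_{n_k})(\varphi(x))$, while (b) gives the pointwise bound $\|(\mathcal{T}s)(\varphi(x))\| \le \|\mathcal{T}\|\,\|s(x)\|$; running this over the countably many rational linear combinations of the $s_n$ and intersecting the corresponding conull sets produces a single conull set on which $\Phi_x$ is simultaneously well-defined, linear and bounded by $\|\mathcal{T}\|$, hence extends to $\Phi_x\in\mathscr{L}(E_x,F_{\varphi(x)})$. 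That $\Phi$ is a premorphism over $\varphi$ then follows because $(\Phi\circ s_n)\circ\varphi^{-1}=\mathcal{T}s_n\in\mathcal{M}_F$ by construction, because \cref{generatedsep} writes any $s\in\mathcal{M}_E$ as an almost everywhere limit of elements of $\lin\{\mathbbm{1}_A s_n\}$, and because continuity of each $\Phi_x$ together with axiom (iii) of \cref{defmeasurablebundle} propagates measurability to the limit. Finally $\mathcal{T}=\mathcal{T}_\Phi$ holds on the generators, hence by the module property and the density of $\lin\{\mathbbm{1}_A s_n\}$ in $\Gamma^1(E)$ from \cref{intsequence} everywhere. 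The converse \enquote{(d)$\Rightarrow$(a)} is a one-line fiberwise computation using linearity of $\Phi_x$.

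For the quantitative addenda I would argue as follows. Uniqueness of $\Phi$ follows since two representing morphisms agree on each $s_n(x)$ off a null set, and countably many null sets plus density and continuity force $\Phi_x=\Phi_x'$ almost everywhere. Measurability of $x\mapsto\|\Phi_x\|$ comes from writing it as the countable essential supremum $\sup_{c}\|\sum_n c_n(\mathcal{T}s_n)(\varphi(x))\|/\|\sum_n c_n s_n(x)\|$ over rational coefficient tuples, which also shows $|\Phi|\in\mathrm{L}^\infty(X)$ with $\esssup_x\|\Phi_x\|\le\|\mathcal{T}\|$. The identity $\sup\{|\mathcal{T}_\Phi s|\mid s\in\Gamma^\infty(E),\,|s|\le\mathbbm{1}\}=T_\varphi|\Phi|$ reduces, via $|\mathcal{T}_\Phi s|(\varphi(x))=\|\Phi_x s(x)\|$, to the fiberwise fact $\sup_{\|v\|\le 1}\|\Phi_x v\|=\|\Phi_x\|$, realized as a countable supremum over normalized rational-combination sections so that the supremum is attained in $\mathrm{L}^\infty(X)$; the norm identities $\|\Phi\|=\|\mathcal{T}_\Phi\|_{\Gamma^\infty(E)}=\|\mathcal{T}_\Phi\|_{\Gamma^1(E)}$ then follow by taking $\mathrm{L}^\infty$- and $\mathrm{L}^1$-norms and comparing with the domination $|\mathcal{T}_\Phi s|\le T_\varphi(|\Phi|\cdot|s|)$. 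For the isometry statement, if $\mathcal{T}_\Phi$ is an $\mathrm{L}^1$-isometry then $\|\Phi\|=1$ gives $\|\Phi_x s(x)\|\le\|s(x)\|$ pointwise, while $\int\|\Phi_x s(x)\|\,\mathrm{d}\mu_X=\|\mathcal{T}_\Phi s\|=\|s\|=\int\|s(x)\|\,\mathrm{d}\mu_X$ forces equality almost everywhere for every $s$, and applying this to the generators yields that $\Phi_x$ is an isometry almost everywhere; the reverse implication is immediate. I expect the genuine difficulty to lie entirely in the construction step (a)$\Rightarrow$(d) — specifically in assembling the fiberwise maps into a single measurable morphism over one conull set and in the measurable-selection arguments underlying the supremum identity — whereas the equivalence of (a)–(c) and the remaining norm computations are routine once the measure-preserving change of variables is in place.
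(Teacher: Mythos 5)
Your proposal is correct, and for most of the theorem it follows the paper's own route: the implication (a)$\Rightarrow$(b) via the $\mathrm{L}^1$--$\mathrm{L}^\infty$ pairing and the measure-preserving change of variables, the construction of $\Phi$ from $\mathcal{T}$ via rational linear combinations of the sections from \cref{intsequence} on a single conull set, the verification that $\Phi$ is a morphism via \cref{generatedsep}, and the countable families of normalized finite-measure-support sections behind the measurability of $|\Phi|$, the supremum identity and the isometry statement all match the paper's arguments. The genuine difference is in (c)$\Rightarrow$(a). The paper proves this implication by cutting $\Gamma$ and $\Lambda$ into submodules $\Gamma_n,\Lambda_n$ of essentially bounded sections supported on finite-measure sets $A_n$ (resp.\ $\varphi(A_n)$), verifying that these are complete for the $\sup$-norm, transporting everything through an algebra-and-lattice isomorphism $\mathrm{L}^\infty(X)\cong \mathrm{C}(K)$, invoking the topological theorem \cref{latticevsmod} for $\mathrm{U}(K)$-normed modules, and finally undoing the truncations by two limiting arguments. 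You argue directly instead: (c) forces $\mathcal{T}(\mathbbm{1}_A s)$ to have vector-valued norm supported in $\varphi(A)$ and $\mathcal{T}(\mathbbm{1}_{A^c}s)$ in $\varphi(A)^c$, so multiplying $\mathcal{T}s=\mathcal{T}(\mathbbm{1}_A s)+\mathcal{T}(\mathbbm{1}_{A^c}s)$ by $\mathbbm{1}_{\varphi(A)}$ kills the second summand and fixes the first (using that $|t|=0$ implies $t=0$ and that the module is unitary), giving $\mathcal{T}(\mathbbm{1}_A s)=T_\varphi\mathbbm{1}_A\cdot\mathcal{T}s$; linearity and the uniform density of simple functions in $\mathrm{L}^\infty(X)$ then finish. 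This is a genuine simplification: it is self-contained, avoids the Gelfand representation and the completeness check for $\Gamma_n$, and it makes transparent that the measurable setting is easier precisely because $\mathrm{L}^\infty(X)$ has an abundance of idempotents, which $\mathrm{C}_0(\Omega)$ lacks; what the paper's detour buys in exchange is the reuse of the already-proved topological result.

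One small point to tighten in the quantitative part: the domination $|\mathcal{T}_\Phi s|\le T_\varphi(|\Phi|\cdot|s|)$ that you cite only yields the inequalities $\|\mathcal{T}_\Phi\|_{\Gamma^1(E)}\le\|\Phi\|$ and $\|\mathcal{T}_\Phi\|_{\Gamma^\infty(E)}\le\|\Phi\|$. For the reverse inequality $\|\Phi\|\le\|\mathcal{T}_\Phi\|_{\Gamma^1(E)}$ you need to apply (b) to $\mathcal{T}_\Phi$ regarded as an operator on $\Gamma^1(E)$ --- for instance to the normalized sections with finite-measure support, or, as the paper does, to truncations $T_\varphi^{-1}\mathbbm{1}_A\cdot s$ of bounded sections --- and only then take the countable supremum realizing $T_\varphi|\Phi|$. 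This is a two-line repair consistent with your strategy, not a gap in it.
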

\begin{proof}
We write $\langle\, \cdot \,,\,\cdot\, \rangle$ for the canonical duality between $\mathrm{L}^1(X)$ and $\mathrm{L}^\infty(X)$.
Now assume that (a) is valid and take $s \in \Gamma$. For each $f \in \mathrm{L}^\infty(X)$ with $f \geq 0$ we obtain
\begin{align*}
\langle |\mathcal{T}s|, f\rangle &=  \|f\mathcal{T}s\| = \|\mathcal{T}((T_{{\varphi}^{-1}} f) \cdot  s)\|\\
&\leq \|\mathcal{T}\| \cdot \|T_{{\varphi}^{-1}} f \cdot  s\| = \|\mathcal{T}\| \cdot \langle |s|,T_{{\varphi}^{-1}} f \rangle = \langle \|\mathcal{T}\|\cdot  T_{\varphi}|s|, f \rangle
%\int |f\cdot \mathcal{T}s|\, \mathrm{d}\mu = \int |\mathcal{T}((T_{{\varphi}^{-1}} f) \cdot  s)|\, \mathrm{d}\mu_X\\
%&\leq \int \|\mathcal{T}\| \cdot |(T_{{\varphi}^{-1}} f) \cdot  s|\, \mathrm{d}\mu\\
%&=  \|\mathcal{T}\| \int  f T_{\varphi}|s|\, \mathrm{d}\mu \\
%&= \langle \|\mathcal{T}\| \cdot T_{\varphi}|s|,f\rangle 
\end{align*}
since $\varphi$ is measure-preserving. Thus, $|\mathcal{T}s| \leq \|\mathcal{T}\| \cdot T_{\varphi}|s|$.\smallskip

The implication \enquote{(b) $\Rightarrow$ (c)} is clear. Now assume that (c) holds. Since $X$ is $\sigma$-finite, we find measurable and pairwise disjoint sets $A_n \in \Sigma_X$ with finite measure for $n\in \N$ such that 
\begin{align*}
\Omega_X = \bigcup_{n \in \N} A_n.
\end{align*}
For fixed $n \in \N$ consider the submodules
\begin{align*}
\Gamma_n &\defeq \{s \in \Gamma\mid \mathbbm{1}_{A_n}|s| =  |s| \in \mathrm{L}^\infty(X)\} \subset \Gamma,\\
\Lambda_n &\defeq \{s \in \Lambda \mid \mathbbm{1}_{\varphi(A_n)} |s| = |s| \in \mathrm{L}^\infty(X)\}\subset \Lambda.
\end{align*}
We define $\|s\|_{\infty} \defeq \| |s| \|_{\mathrm{L}^\infty(X)}$ for $s \in \Gamma_n$ and $s \in \Lambda_n$, respectively. We show that this turns $\Gamma_n$ and $\Lambda_n$ into Banach modules over $\mathrm{L}^\infty(X)$. If $(s_m)_{n \in \N}$ is a Cauchy sequence in $\Gamma_n$ with respect to the norm $\|\cdot\|_\infty$, then it is also a Cauchy sequence with respect to the norm of $\Gamma$. By completeness of $\Gamma$ there is $s \in \Gamma$ such that $\lim_{n \rightarrow \infty} s_m = s$ in $\Gamma$. Using that there is a subsequence $(s_{m_k})_{k \in \N}$ of $(s_m)_{m \in \N}$ such that $|s_{m_k} - s| \rightarrow 0$ almost everywhere, it follows that $s \in \Gamma_n$ and $\lim_{m \rightarrow \infty} s_m = s$ with respect to $\|\cdot\|_{\infty}$. Thus, $\Gamma_n$---and likewise $\Lambda_n$---is a Banach module over $\mathrm{L}^\infty(X)$. Moreover, $\mathcal{T}|_{\Gamma_n} \in \mathscr{L}(\Gamma_n,\Lambda_n)$ by (c). Choose a compact space $K$ and an isomorphism $V \in \mathscr{L}(\mathrm{L}^\infty(X),\mathrm{C}(K))$ of Banach algebras and lattices. We then consider $\Gamma_n$ and $\Lambda_n$ as Banach modules over $\mathrm{C}(K)$ via $V^{-1}$ and see that the mappings
\begin{align*}
&\Gamma_n \longrightarrow \mathrm{C}(K), \quad s \mapsto V|s|,\\
&\Lambda_n \longrightarrow \mathrm{C}(K), \quad s \mapsto V|s|
\end{align*}
turn $\Gamma_n$ and $\Lambda_n$ into $\mathrm{U}(K)$-normed modules. Moreover, since every algebra isomorphism on $\mathrm{C}(K)$ is induced by a homeomorphism on $K$, we can apply \cref{latticevsmod} to the $VT_\varphi V^{-1}$-homomorphism $\mathcal{T}|_{\Gamma_n} \in \mathscr{L}(\Gamma_n,\Lambda_n)$. This shows that $\mathcal{T}(fs) = (T_\varphi f) \cdot \mathcal{T}s$ for all $f \in \mathrm{L}^\infty(X)$ and $s \in \Gamma_n$.\\
Take $f \in \mathrm{L}^\infty(X)$ and $s \in \Gamma$ with $|s| = \mathbbm{1}_{A_n}|s|$. Then $s = \lim_{m \rightarrow \infty} \mathbbm{1}_{\{|s| \leq m\}}s$ in $\Gamma$ and therefore
\begin{align*}
\mathcal{T}(fs) = \lim_{m \rightarrow \infty} \mathcal{T}(f \mathbbm{1}_{\{|s| \leq m\}}s) = \lim_{m \rightarrow \infty} (T_\varphi f) \cdot \mathcal{T}(\mathbbm{1}_{\{|s| \leq m\}}s) =  (T_\varphi f) \cdot \mathcal{T}s. 
\end{align*}
Finally, we obtain for arbitrary $s \in \Gamma$ and $f \in \mathrm{L}^\infty(X)$
\begin{align*}
\mathcal{T}(fs) &= \lim_{N \rightarrow \infty} \mathcal{T}\left(f\sum_{n =1}^N \mathbbm{1}_{A_n}s\right) =  \lim_{N \rightarrow \infty} \sum_{n =1}^N \mathcal{T}(f \mathbbm{1}_{A_n}s)\\
&= \lim_{N \rightarrow \infty} \sum_{n =1}^N T_\varphi f \cdot \mathcal{T}\mathbbm{1}_{A_n}s = T_\varphi f \cdot \mathcal{T}s.
\end{align*}
This shows (a).\medskip

Now assume that $\Gamma= \Gamma^1(E)$ and $\Lambda = \Gamma^1(F)$ for measurable Banach bundles $E$ and $F$ over $X$ with $E$ separable. We let $Q \defeq \Q$ if $\K= \R$ and $Q \defeq \Q + i\Q$ if $\K = \C$. Now take a sequence $(s_n)_{n \in \N}$ as in $\mathcal{M}_E$ satisfying conidtions (i) -- (iii) of \cref{intsequence} and set
\begin{align*}
H_x \defeq \lin_{Q}\{s_k(x)\mid k \in \N\}
\end{align*}
for every $x \in \Omega_X$.\\
Let $\mathcal{T}$ be a $T_\varphi$-homomorphism. Choose a representative for $\varphi$ (which we again denote by $\varphi$) and a representative $t_n \in \mathcal{M}_F$ of $\mathcal{T}s_n \in \Gamma^1(F)$ for each $n \in \N$. By (b) we obtain 
	\begin{align}
		\left\| \left(\sum_{k=1}^N q_k t_k\right)(\varphi(x))\right\| \leq \|\mathcal{T}\| \cdot \left\| \left(\sum_{k=1}^N q_k s_k\right)(x)\right\| 
	\end{align}
for all $(q_1,...,q_N) \in Q^N$, $N \in \N$ and almost every $x \in \Omega_X$.  For almost every $x \in \Omega_X$ we therefore find a unique $Q$-linear map $\Phi_x \colon H_x \longrightarrow H_{\varphi(x)}$ such that $\Phi_xs_n(x) = (t_n)(\varphi(x))$ for every $n \in \N$. By (3) and property (i) of \cref{intsequence} it has a unique extension to a bounded operator $\Phi_x \in \mathscr{L}(E_x,F_{\varphi(x)})$ for almost every $x \in \Omega_X$. We set $\Phi_x \defeq 0 \in \mathscr{L}(E_x,F_{\varphi(x)})$ for the remaining points $x \in \Omega_X$ and obtain a mapping
\begin{align*}
\Phi \colon E \longrightarrow F, \quad v \mapsto \Phi_{p_E(v)}v.
\end{align*}
Since $\Phi \circ (\mathbbm{1}_A \cdot s_n) = (\mathbbm{1}_{\varphi(A)} \cdot t_n) \circ \varphi$ almost everywhere for every $n \in \N$ and every set $A \in \Sigma_X$, we can apply \cref{generatedsep} to see that for each $s \in \mathcal{M}_E$ there is a $t \in \mathcal{M}_F$ with $\Phi \circ s = t \circ \varphi$ almost everywhere. This shows that $\Phi$ defines a morphism of measurable Banach bundles over $\varphi$ and we denote this again by $\Phi$. Moreover, $\mathcal{T}_\Phi s_n = \mathcal{T}s_n$ and, since $\{\mathbbm{1}_As_n\mid n \in \N, A \in \Sigma_X\}$ defines a total subset of $\Gamma^1(E)$ by \cref{intsequence}, we obtain $\mathcal{T} = \mathcal{T}_\Phi$. Thus (a), (b) and (c) imply (d). The converse implication is obvious.\medskip

Now let $\Phi\colon E \longrightarrow F$ be a morphism over $\varphi$. As usual, we pick a representing premorphism whenever necessary. Using \cref{intsequence} and standard arguments we find a sequence $(\tilde{s}_n)_{n \in \N}$ in $\mathcal{M}_E$ such that 
\begin{itemize}
\item $|\tilde{s}_n|\leq \mathbbm{1}$ almost everywhere for every $n \in \N$,
\item $\mu_X(\{|\tilde{s}_n|\neq 0\}) < \infty$ for every $n \in \N$,
\item $\{\tilde{s}_n(x)\mid n \in \N\}$ is dense in the unit ball $\mathrm{B}_{E_x}$ of $E_x$ for almost every $x \in \Omega_X$.
\end{itemize}
Then
\begin{align*}
\|\Phi|_{E_x}\| = \sup_{n \in \N} \|\Phi|_{E_x}\tilde{s}_n(x)\|
\end{align*}
for almost every $x \in \Omega_X$. Thus, $\Omega_X \longrightarrow \R, \, x \mapsto \|\Phi|_{E_x}\|$ is measurable and $|\Phi|$ defines an element of $\mathrm{L}^\infty(X)$ of norm $\|\Phi\|$.\\
Clearly, $|\mathcal{T}_\Phi s|\leq T_\varphi|\Phi|$ for every $s \in \Gamma^\infty(E)$ with $|s| \leq \mathbbm{1}$.
On the other hand, 
\begin{align*}
T_\varphi|\Phi|(x) &= \|\Phi|_{E_{\varphi^{-1}(x)}}\| = \sup_{n \in \N} \|\Phi|_{E_{\varphi^{-1}(x)}} \tilde{s}_n(\varphi^{-1}(x))\| = \sup_{n \in \N} \|(\mathcal{T}_\Phi \tilde{s}_n)(x)\|
\end{align*}
for almost every $x \in \Omega_X$. This shows
	\begin{align}
		T_\varphi|\Phi| = \sup\{|\mathcal{T}_\Phi s|\mid s \in \Gamma^\infty(E) \textrm{ with } |s| \leq \mathbbm{1}\}.
	\end{align}
Moreover,
\begin{align*}
\|\Phi\| &= \esssup_{x \in \Omega_X} \sup_{n \in \N} \|(\mathcal{T}_\Phi \tilde{s}_n)(x)\| = \sup_{n \in \N} \esssup_{x \in \Omega_X} \|(\mathcal{T}_\Phi \tilde{s}_n)(x)\| \\
&= \sup_{n \in \N} \|\mathcal{T}_\Phi \tilde{s}_n\|_{\Gamma^\infty(E)} \leq \|\mathcal{T}_\Phi\|_{\Gamma^\infty(E)},
\end{align*}
and $\|\mathcal{T}_\Phi\|_{\Gamma^\infty(E)} \leq \|\Phi\|$ is clear, hence $\|\mathcal{T}_\Phi \|_{\Gamma^\infty(E)} = \|\Phi\|$.\\
Now pick $s \in \Gamma^\infty(E)$ with $|s|\leq \mathbbm{1}$. For every measurable set $A \in \Sigma_X$ with finite measure
\begin{align*}
\mathbbm{1}_A|\mathcal{T}_\Phi s| = |\mathcal{T}_\Phi (T_\varphi^{-1}\mathbbm{1}_A \cdot s)| \leq \|\mathcal{T}_\Phi\|_{\Gamma^1(E)}\cdot T_\varphi|(T_\varphi^{-1}\mathbbm{1}_A \cdot s)| \leq \|\mathcal{T}_\Phi\|_{\Gamma^1(E)} \cdot \mathbbm{1}_A
\end{align*}
by (b). Since $X$ is $\sigma$-finite, we obtain $\|\Phi\| \leq \|\mathcal{T}_\Phi\|_{\Gamma^1(E)}$ by (4), and the inequality $\|\mathcal{T}_\Phi\|_{\Gamma^1(E)} \leq \|\Phi\|$ is obvious. Therefore,
\begin{align*}
\|\Phi\| = \|\mathcal{T}_\Phi\|_{\Gamma^\infty(E)} = \|\mathcal{T}_\Phi\|_{\Gamma^1(E)}
\end{align*}
and, since the difference of premorphisms over $\varphi$ is again a premorphism over $\varphi$, this equality also proves the uniqueness of $\Phi$ in (d).\\
If $\Phi$ is an isometry, then clearly $\mathcal{T}_\Phi \in \mathscr{L}(\Gamma^1(E),\Gamma^1(F))$ is an isometry. Assume conversely that $\mathcal{T}_\Phi$ is an isometry. We already know that $\Phi|_{E_x}$ is a contraction for almost every $x \in \Omega_X$. Assume that there is a set $A \in \Sigma_X$ with positive measure such that $\Phi|_{E_x}$ is not an isometry for every $x \in A$. We then find an $n \in \N$ and a set $B \in \Sigma_X$ with positive measure such that $\|\Phi|_{E_x}\tilde{s}_n(x)\| < \|\tilde{s}_n(x)\|$ for every $x \in B$. This implies
\begin{align*}
\|\mathcal{T}_\Phi \tilde{s}_n\| = \int_X \|\Phi|_{E_x}\tilde{s}_n(x)\|\, \mathrm{d}\mu_X < \int_X \|\tilde{s}_n(x)\|\, \mathrm{d}\mu_X = \|\tilde{s}_n\|,
\end{align*}
a contradiction.
\end{proof}
Since we have not employed any continuity assumptions on dynamical measurable Banach bundles, we immediately obtain the following consequence of \cref{preparationAL}.
\begin{corollary}\label{cormeas}
Let $G$ be a (discrete) group, $S\subset G$ be a submonoid and $(X;\varphi)$ a measure-preserving $G$-dynamical system. Moreover let $E$ be a separable Banach bundle over $X$ and let $\mathcal{T}\colon S \longrightarrow \mathscr{L}(\Gamma^1(E))$ be a monoid homomorphism such that $(\Gamma^1(E);\mathcal{T})$ is an $S$-dynamical Banach module over $(\mathrm{L}^\infty(X);T_{\phi})$. Then there is a unique dynamical Banach bundle $(E;\Phi)$ over $(X;\varphi)$ such that $\mathcal{T}_{\Phi} = \mathcal{T}$.
\end{corollary}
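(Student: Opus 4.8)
The plan is to imitate the proof of \cref{esssurj} from the topological setting, replacing the use of \cref{homismorph} with its measurable counterpart \cref{preparationAL}. A pleasant simplification is that the notion of a measurable $S$-dynamical Banach bundle imposes no continuity or local-boundedness requirements, only the algebraic identities $\Phi_g \circ \Phi_h = \Phi_{gh}$ and $\Phi_1 = \mathrm{Id}_E$; hence, once the individual fiber morphisms are produced, nothing beyond these identities needs to be verified. (Strong continuity of $\mathcal{T}$ is automatic here since $G$ is discrete.)

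First I would fix $g \in S$. By hypothesis $\mathcal{T}(g) \in \mathscr{L}(\Gamma^1(E))$ is a $T_{\varphi_g}$-homomorphism, i.e., it satisfies condition (a) of \cref{preparationAL} applied with $\varphi = \varphi_g$, $\Gamma = \Lambda = \Gamma^1(E)$ and $E = F$ separable. The equivalence (a) $\Leftrightarrow$ (d) of that theorem then yields a morphism $\Phi_g \colon E \longrightarrow E$ over $\varphi_g$ with $\mathcal{T}(g) = \mathcal{T}_{\Phi_g}$, and the uniqueness clause of \cref{preparationAL} guarantees that $\Phi_g$ is the unique such morphism.

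Next I would read off the cocycle structure from this uniqueness. Since $\mathcal{T}(1) = \mathrm{Id}_{\Gamma^1(E)} = \mathcal{T}_{\mathrm{Id}_E}$, uniqueness gives $\Phi_1 = \mathrm{Id}_E$. For $g, h \in S$, the fact that $\varphi$ is a group homomorphism gives $\varphi_g \circ \varphi_h = \varphi_{gh}$, so $\Phi_g \circ \Phi_h$ is again a morphism, now over $\varphi_{gh}$; a direct computation with the defining formula $\mathcal{T}_\Phi s = \Phi \circ s \circ \varphi^{-1}$ shows $\mathcal{T}_{\Phi_g \circ \Phi_h} = \mathcal{T}_{\Phi_g} \circ \mathcal{T}_{\Phi_h} = \mathcal{T}(g)\mathcal{T}(h) = \mathcal{T}(gh) = \mathcal{T}_{\Phi_{gh}}$, whence $\Phi_g \circ \Phi_h = \Phi_{gh}$ by uniqueness. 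Therefore $(E;\Phi)$ is an $S$-dynamical Banach bundle over $(X;\varphi)$ with $\mathcal{T}_\Phi = \mathcal{T}$. Its uniqueness is then immediate: any $(E;\Psi)$ with $\mathcal{T}_\Psi = \mathcal{T}$ satisfies $\mathcal{T}_{\Psi_g} = \mathcal{T}(g) = \mathcal{T}_{\Phi_g}$ for every $g \in S$, so $\Psi_g = \Phi_g$ by the uniqueness in \cref{preparationAL}.

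The only genuinely delicate point is the bookkeeping with equivalence classes: morphisms of measurable bundles, and the $\varphi_g$ themselves, are defined only up to null sets, and the relation $\varphi_g \circ \varphi_h = \varphi_{gh}$ holds merely almost everywhere at the level of representatives. I would handle this exactly as the paper does elsewhere, by implicitly choosing representatives for each $\Phi_g$ and each $\varphi_g$ and noting that the identity $\mathcal{T}_{\Phi_g \circ \Phi_h} = \mathcal{T}_{\Phi_{gh}}$ is an equality of bounded operators on $\Gamma^1(E)$, hence insensitive to the null-set ambiguities in the chosen representatives; consequently the deduced equalities $\Phi_g \circ \Phi_h = \Phi_{gh}$ are genuine equalities in $\mathrm{Mor}_{\varphi_{gh}}(E,E)$.
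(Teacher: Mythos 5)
Your proposal is correct and matches the paper's intended argument: the paper derives this corollary "immediately" from \cref{preparationAL} in exactly the way you spell out — apply the equivalence (a) $\Leftrightarrow$ (d) to each $\mathcal{T}(g)$, obtain the monoid identities $\Phi_1 = \mathrm{Id}_E$ and $\Phi_g \circ \Phi_h = \Phi_{gh}$ from the uniqueness clause, and observe that the measurable definition imposes no continuity or local-boundedness conditions (which is precisely what made \cref{esssurj} harder in the topological case). Your closing remark on null-set bookkeeping is also consistent with the paper's conventions on representatives of morphisms.
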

Finally, we use a result of Gutmann (\cite{Gutm1993b}) to represent $\mathrm{L}^1(X)$-normed modules. 
\begin{proposition}\label{representationAL}
Let $X$ be a measure space and $\Gamma$ an $\mathrm{L}^1(X)$-normed module. Then the following assertions hold.
\begin{enumerate}[(i)]
\item There is a measurable Banach bundle $E$ over $X$ such that $\Gamma^1(E)$ is isometrically isomorphic to $\Gamma$.
\item If $\Gamma$ is separable, then there is a separable Banach bundle $E$ over $X$ such that $\Gamma^1(E)$ is isometrically isomorphic to $\Gamma$. Moreover, $E$ is unique up to isometric isomorphy.
\end{enumerate}
\end{proposition}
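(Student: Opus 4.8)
The plan is to obtain the representing bundle from the $\mathrm{L}^1(X)$-valued norm supplied by \cref{alnorm} and the representation theorem of Gutmann \cite{Gutm1993b}, whose hypotheses amount to completeness together with a decomposability property of the norm. I would therefore first establish that an $\mathrm{L}^1(X)$-normed module $\Gamma$ is \emph{decomposable}, in the sense that whenever $|s|=u_1+u_2$ with $u_1,u_2\in\mathrm{L}^1(X)_+$ one can write $s=s_1+s_2$ with $|s_i|=u_i$. This is immediate from the module structure: putting $f_i\defeq u_i/|s|$ on $\{|s|\neq 0\}$ and $f_i\defeq 0$ elsewhere gives $f_i\in\mathrm{L}^\infty(X)$ with $0\leq f_i$ and $f_1+f_2=\mathbbm{1}_{\{|s|\neq 0\}}$, so that $s_i\defeq f_is$ satisfy $|s_i|=f_i|s|=u_i$ and $s_1+s_2=\mathbbm{1}_{\{|s|\neq 0\}}s=s$, the last equality because $\mathbbm{1}_{\{|s|=0\}}s$ has vanishing norm. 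Feeding $(\Gamma,|\cdot|)$ into Gutmann's theorem then produces a measurable Banach bundle $E$ over $X$ with $\Gamma^1(E)\cong\Gamma$ isometrically, proving (i); note that \cref{defmeasurablebundle} imposes no separability, so a possibly non-separable $E$ is admissible here.

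For (ii) I would make the fibers explicit and control separability. Using separability of $\Gamma$ and the arguments behind \cref{lemmasep} and \cref{intsequence}, choose a sequence $(s_n)_{n\in\N}$ in $\Gamma$ whose $\K$-linear span is dense, with $\mu_X(\{|s_n|\neq 0\})<\infty$ for all $n$. Fixing measurable representatives of $|v|$ for each $v$ in the countable $Q$-span $V$ of $\{s_n\}$ (with $Q=\Q$ or $\Q+i\Q$), the relations $|v+w|\leq|v|+|w|$ and $|\lambda v|=|\lambda||v|$ hold off a single null set, so for almost every $x$ the map $v\mapsto|v|(x)$ is a seminorm on $V$; the fiber $E_x$ is then the completion of the quotient $V/\{v:|v|(x)=0\}$ (with scalars extended from $Q$ to $\K$), and $E_x\defeq\{0\}$ on the exceptional null set. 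Writing $\widehat{s}(x)$ for the image of $s$ in $E_x$, the sections $\widehat{s_n}$ generate a measurable Banach bundle in the sense of \cref{generatedmeasbundle}, this being exactly the data to which Gutmann's theorem applies; since $\lin\{\widehat{s_n}(x)\mid n\in\N\}$ is dense in $E_x$ for almost every $x$, the bundle $E$ is separable.

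It remains to identify $\Gamma$ with $\Gamma^1(E)$. The assignment $v\mapsto\widehat{v}$ on $V$ extends by density to a map $\widehat{\,\cdot\,}\colon\Gamma\to\Gamma^1(E)$, which is isometric because $\|\widehat{s}\|_{\Gamma^1(E)}=\int_{\Omega_X}\|\widehat{s}(x)\|_{E_x}\,\mathrm{d}\mu_X=\||s|\|_{\mathrm{L}^1(X)}=\|s\|$ by \cref{alnorm}; hence its image is a closed submodule containing every $\mathbbm{1}_A\widehat{s_n}=\widehat{\mathbbm{1}_A s_n}$. As $(s_n)$ satisfies hypotheses (i) and (ii) of \cref{intsequence}, the family $\{\mathbbm{1}_A\widehat{s_n}\mid A\in\Sigma_X,\,n\in\N\}$ is total in $\Gamma^1(E)$, so the map is onto and thus an isometric isomorphism. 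Uniqueness up to isometric isomorphy follows from \cref{preparationAL}: an isometric module isomorphism $\Gamma^1(E)\to\Gamma^1(F)$ over $\mathrm{id}_X$, and likewise its inverse, is induced by an isometric bundle morphism, and the uniqueness clause of \cref{preparationAL} forces the two compositions to be the identities.

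The step I expect to be the main obstacle is precisely the passage from the abstract norm $|s|\in\mathrm{L}^1(X)$, which has no pointwise meaning, to a genuinely measurable field of fiber norms, together with the corresponding surjectivity onto $\Gamma^1(E)$. In the separable case this is tamed by the countable generating sequence and the totality statement of \cref{intsequence}; in the general case no such sequence is available, and one must lean entirely on the decomposability of the norm to bring Gutmann's construction to bear.
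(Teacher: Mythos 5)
Your part (i) is essentially the paper's own argument: the paper gets the Banach--Kantorovich property of $\Gamma$ by citing 7.1.3 of \cite{Kusr2000} and then invokes Theorem 3.4.8 of \cite{Gutm1993b}, whereas you verify the decomposability hypothesis by hand (your computation with $f_i \defeq u_i/|s|$ is correct, including the observation that $\mathbbm{1}_{\{|s|=0\}}s=0$). Two points the paper treats and you pass over: first, Gutmann's machinery is a \emph{real} theory, so for $\K=\C$ one has to inspect his construction to see that the resulting bundle carries a complex structure and the isomorphism is $\C$-linear; second, Gutmann produces an isometry of \emph{lattice-normed spaces}, and to obtain an isometric isomorphism of Banach modules over $\mathrm{L}^\infty(X)$ --- which is what the proposition asserts --- the paper applies \cref{preparationAL} (condition (b) with $\varphi=\mathrm{id}_X$ and $m=1$ forces the module property (a)). Both are fixable, but they are genuine steps, not bookkeeping.

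Your part (ii) takes a genuinely different route. The paper recycles the (possibly non-separable) bundle $E$ from (i): it picks a dense sequence $(s_n)$ in $\Gamma^1(E)$, sets $F_x \defeq \overline{\lin}\{s_n(x)\mid n \in \N\}$ with $\mathcal{M}_F \defeq \{s \in \mathcal{M}_E\mid s(x) \in F_x \textrm{ for all } x\}$, and checks that the inclusion $\Gamma^1(F) \longrightarrow \Gamma^1(E)$ is an isometric isomorphism onto a separable bundle's section space. You instead rebuild the bundle from scratch out of the countable $Q$-span $V$, taking fibers to be completions of $V/\{v \mid |v|(x)=0\}$ --- in effect re-proving the separable case of Gutmann's theorem. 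This buys self-containedness: your (ii) does not depend on (i) or on \cite{Gutm1993b} at all, which is a real advantage. The cost is that you must verify that the abstractly extended map $\widehat{\,\cdot\,}\colon \Gamma \longrightarrow \Gamma^1(E)$ is compatible with the module structure, and this is exactly the step you assert without proof: the identity $\widehat{\mathbbm{1}_A s_n} = \mathbbm{1}_A \widehat{s_n}$ and the claim that the image is a sub\emph{module} do not follow formally from your construction, because $\widehat{\,\cdot\,}$ is extended by continuity from a $Q$-linear map on $V$, which is not a module map, and $\mathbbm{1}_A s_n \notin V$ in general. Without this, surjectivity --- the heart of the representation --- is not established, and neither is the fact that the final isomorphism is one of Banach modules. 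The gap can be closed cleanly: show $|\widehat{s}| = |s|$ in $\mathrm{L}^1(X)$ for \emph{all} $s \in \Gamma$ (both sides are $1$-Lipschitz in $s$ and agree on the dense set $V$), then apply \cref{preparationAL} with $\varphi = \mathrm{id}_X$ to conclude that $\widehat{\,\cdot\,}$ is a module homomorphism; the identity $\widehat{\mathbbm{1}_A s_n} = \mathbbm{1}_A \widehat{s_n}$, the submodule property of the image, and hence surjectivity via \cref{intsequence} then all follow. Your uniqueness argument via \cref{preparationAL} coincides with the paper's.
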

\begin{proof}
In the real case, 7.1.3 of \cite{Kusr2000} shows that the space $\Gamma$ is in particular a Banach--Kantorovich space over $\mathrm{L}^1(X)$ (see Chapter 2 of \cite{Kusr2000} for this concept) and we find a measurable Banach bundle $E$ over $X$ such that $\Gamma$ is isometrically isomorphic to $\Gamma^1(E)$ as a lattice normed space by Theorem 3.4.8 of \cite{Gutm1993b}\footnote{Note that the definition of measurable Banach bundles by Gutmann slightly differs from ours. However, every measurable Banach bundle in the sense of Gutmann canonically defines a measurable Banach bundle in our sense having the same space $\Gamma^1(E)$.}. If we start with a complex $\mathrm{L}^1(X)$-normed module, the proof of this theorem reveals that the constructed Banach bundle $E$ is canonically a Banach bundle of complex Banach spaces and that the isomorphism of $\Gamma$ and $\Gamma^1(E)$ is $\C$-linear (see Theorem 3.3.4 of \cite{Gutm1993b} and Theorems 2.1.5 and 2.4.2 of \cite{Gutm1993a}). In any case, we can apply \cref{preparationAL} to see that this isomorphism is an isometric Banach module isomorphism.\\
Now assume that $\Gamma$ and therefore $\Gamma^1(E)$ is separable. Let $(s_n)_{n \in \N}$ be dense in $\Gamma^1(E)$ and choose a representative in $\mathcal{M}_E$ for each $s_n$ which we also denote by $s_n$. We define a new measurable Banach bundle by setting $F_x \defeq \overline{\lin \{s_n(x)\mid n \in \N\}}$ for every $x \in \Omega_X$ and 
\begin{align*}
\mathcal{M}_F \defeq \{s \in \mathcal{M}_E \mid s(x) \in F_x \textrm{ for every } x \in \Omega_X\}.
\end{align*} 
Then 
\begin{align*}
V \colon \Gamma^1(F) \longrightarrow \Gamma^1(E), \quad s \mapsto s
\end{align*}
is an isometric module homomorphism. However, since $s_n \in \Gamma^1(F)$ for every $n \in \N$, $V$ is in fact an isometric isomorphism. Clearly, $F$ is separable. Uniqueness up to isometric isomorphy follows immediately from \cref{preparationAL}.
\end{proof}
Combining \cref{lemmasep}, \cref{cormeas}, \cref{preparationAL} and \cref{representationAL} now readily yields \cref{main2}.

\begin{remark}
Note that in contrast to the topological setting, the construction of the representing separable measurable Banach bundle is not canonical and involves choices.
\end{remark}

\parindent 0pt
\parskip 0.5\baselineskip
\setlength{\footskip}{4ex}
\bibliographystyle{alpha}
\bibliography{literature} 
\footnotesize

  \textsc{Henrik Kreidler, Mathematisches Institut, Bergische Universität Wuppertal, Gaußstraße 20,
D-42119 Wuppertal, Germany}\par\nopagebreak
  \textit{E-mail address}: \texttt{kreidler@uni-wuppertal.de}\par\nopagebreak
  \textsc{Sita Siewert, Mathematisches Institut, Universität Tübingen, Auf der Morgenstelle 10, D-72076 Tübingen, Germany}\par\nopagebreak
  \textit{E-mail address}: \texttt{sisi@fa.uni-tuebingen.de}
\end{document}